\newcommand{\RR}{\mathbf{R}}
\newcommand{\C}{\mathbb{C}}
\newcommand{\CP}{\mathbb{C}P}
\newcommand{\R}{\mathbb{R}}
\newcommand{\Z}{\mathbb{Z}}
\newcommand{\id}{\mathrm{Id}}
\newcommand{\OP}{\operatorname}
\renewcommand{\Re}[1]{\mathfrak{Re}\,#1}
\renewcommand{\Re}{\mathfrak{Re}}
\theoremstyle{plain}
\newtheorem{thm}{Theorem}[section]
\newtheorem{cor}[thm]{Corollary}
\newtheorem{lem}[thm]{Lemma}
\newtheorem{prop}[thm]{Proposition}
\newtheorem{mainthm}{Theorem}
\newtheorem{maincor}[mainthm]{Corollary}
\theoremstyle{definition}
\newtheorem{defn}[thm]{Definition}
\theoremstyle{remark}
\newtheorem{rem}[thm]{Remark}
\newtheorem{ex}[thm]{Example}
\numberwithin{equation}{section}
\title[$C^0$-Legendrian knots and non-squeezing]{$C^0$-limits of Legendrian knots and contact non-squeezing}
\author{Georgios Dimitroglou Rizell}
\address{Department of Mathematics\\
Uppsala University\\
Box 480\\
SE-751 06 Uppsala\\
Sweden}
\email{georgios.dimitroglou@math.uu.se}
\author{Michael G. Sullivan}
\address{Department of Mathematics and Statistics\\
University of Massachusetts\\
Amherst\\
MA 01003\\
USA}
\email{sullivan@math.umass.edu}
\thanks{The first author is supported by the Knut and Alice Wallenberg Foundation under the grant KAW 2016.0198, and by the Swedish Research Council under the grant number 2020-04426. The second author is supported by the Simons Foundation grant number 708337.
The authors are grateful to: Paolo Ghiggini who taught the authors about properties of Legendrians in neighborhoods of transverse knots; Sobhan Seyfaddini for pointing out relevant questions and showing interest in the work; and Thomas Kragh for pointing out that Theorem \ref{thm:c0} indeed is sufficiently strong to settle Gromov's Alternative.}
\begin{document}

\begin{abstract}
Take a sequence of contactomorphisms of a contact three-manifold that $C^0$-converges to a homeomorphism. If the images of a Legendrian knot limit to a smooth knot under this sequence, we show that it is Legendrian. We prove this by establishing that, on one hand, non-Legendrian knots admit a type of contact-squeezing onto transverse knots while, on the other, Legendrian knots do not admit such a squeezing. The non-trivial input from contact topology that is needed is (a local version of) the Thurston--Bennequin inequality.
\end{abstract}

\maketitle

\section{Introduction and results}
A knot $K$ inside a contact 3-manifold $(M^3,\xi)$ is {\bf Legendrian} (resp. {\bf transverse}) if, for \emph{all} points $p \in K,$ $T_pK \subset \xi_p$ (resp. $T_p K \not \subset \xi_p$). 
In this article, all knots are considered to be smooth co-orientable embedding of $S^1$ into a contact 3-manifold, where the contact structure of the latter moreover is assumed to be co-orientable; we do not make additional assumptions on the ambient contact manifold, i.e.~it can be either closed or open.
Generalizing the notion of transverse, the knot $K$ is called {\bf non-Legendrian} if, for \emph{some} $p \in K,$  $T_p K \not \subset \xi_p.$
Both Legendrian and transverse knots have been widely studied, and each class exhibits various interesting rigidity phenomena.
Non-Legendrian knots are somewhat more flexible, especially when considered from a quantitative viewpoint; for example, in the case when there exists a contactomorphism of $(M^{2n+1}, \xi)$ that connects two non-Legendrian $n$-dimensional submanifolds, Rosen--Zhang \cite[Section 1]{RosenZhang} have shown that there exists such a contactomorphism of arbitrarily small Hofer norm.

General non-Legendrian knots in the contact geometric setting have not received the same amount of attention as transverse and Legendrian knots. This article shows that non-Legendrian knots behave more like transverse knots than Legendrian knots, at least when it comes to quantitative questions. Indeed, the starting point of the results of this article is the following type of flexibility: a non-Legendrian knot can be ``squeezed'' arbitrarily close to some given transverse knot. (See Theorem \ref{thm:squeezing} below for the precise statement.)

In the following we fix an arbitrary Riemannian metric on $M$ inducing a distance function $d,$ and denote by
$$B_r(K) \coloneqq  \{x \in M; \:\: d(K,x)<r \} \subset M$$
the set of points of distance less than $r$ from the subset $K \subset M.$ 

The results in this paper are closely connected to the concept of contact squeezing. We begin with a version connected to contact isotopies.
\begin{defn}
\label{defn:squeezingIso}
Let $K_0,K \subset (M,\xi)$ be submanifolds of a contact manifold. We say that the \emph{contact isotopy} $\varphi^t \colon M \to M$ {\bf squeezes} $K_0$ {\bf onto} $K$ if there exists $\epsilon(t)$ with $\lim_{t \to +\infty} \epsilon(t) =0$ such that for all $t \gg 0$ sufficiently large, $\varphi^t(K_0) \subset B_{\epsilon(t)}(K)$ and $\varphi^t(K_0)$ 
is smoothly isotopic to $K$ inside $B_{\epsilon(t)}(K).$
\end{defn}
One of our main results is that non-Legendrian knots are flexible in the sense that they can be squeezed onto transverse knots.
\begin{mainthm}
\label{thm:squeezing}
Let $K \subset (M^3,\xi)$ be a non-Legendrian knot. There exists a transverse knot $T \subset (M^3,\xi)$ and a contact isotopy $\varphi^t \colon M \xrightarrow{\cong} M$ that squeezes $K$ onto $T$.

In particular, by replacing $M$ with a small tubular neighborhood of $K$, we can assume that the transverse knot $T$ lives in that neighborhood.
\end{mainthm}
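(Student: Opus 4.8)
The plan is to reduce the whole problem to a model neighborhood of a transverse knot, to construct the target $T$ as a nearby transverse knot sharing the transverse arc of $K$, and then to squeeze $K$ onto $T$ by a contact isotopy that drives the normal distance to $T$ to zero; the genuine difficulty is that contact rigidity forbids a naive radial contraction, and the transverse arc is precisely what lets us get around it.

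First I would exploit the hypothesis. Since $K$ is non-Legendrian there is a point $p$ with $T_pK\not\subset\xi_p$, and by openness a whole transverse arc $A\subset K$ through $p$. I would then build a transverse knot $T$ that is smoothly isotopic to $K$, contained in an arbitrarily small tubular neighborhood of $K$, and equal to $K$ along $A$: away from $A$ one keeps the already transverse part of $K$ and replaces $K$ near the Legendrian-tangency locus $Z=\{q\in K:T_qK\subset\xi_q\}$ (a proper, possibly complicated, closed subset, and note that $Z$ is \emph{invariant} under contact isotopy) by a small Reeb-direction spiral. This simultaneously delivers the ``in particular'' statement, since $T$ can be taken inside a tubular neighborhood of $K$. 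Using the standard neighborhood theorem for transverse knots I would fix coordinates $(r,\theta,z)\in D^2\times S^1$ on a solid torus $N$ around $T$ in which $\alpha=dz+r^2\,d\theta$, the Reeb field is $\partial_z$, and $T=\{r=0\}$; by construction $K$ winds once around this core, coinciding with the core $\{r=0\}$ over the $z$-interval occupied by $A$ and lying at small $r>0$ elsewhere.

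The heart of the matter is to construct a contact isotopy $\varphi^t$, supported in $N$ and hence extendable by the identity to all of $M$, with $\max_{K}r\circ\varphi^t\to0$ while $\varphi^t(K)$ keeps winding once around $T$. Here I must fight two layers of rigidity. A flow with $\dot z=0$ preserves the fiber area $r\,dr\wedge d\theta$ and therefore cannot shrink the normal disk at all, so any contraction must move points in the Reeb direction; but once $\dot z\neq0$, a knot winding once around the $S^1$-factor meets the obstruction that for an autonomous Hamiltonian $f(z)$ the quantity $r^2/f(z)$ is preserved along trajectories, and $\oint d\log f=0$ forces zero net contraction. The way out is twofold. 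Because $K$ already lies on the core along $A$, I only need to contract $K$ over the complementary $z$-arc, where the periodicity obstruction is absent and a Hamiltonian $f(z)$ with $f'<0$ genuinely sends $r\to0$; and to keep the Reeb drift from collapsing the winding I would use a non-autonomous, peristaltic Hamiltonian whose contracting sector (a traveling stable zero of $f_t$) sweeps slowly once around $S^1$, so that every point of $K\setminus A$ is successively pulled toward the core while the cyclic order around $S^1$, and hence the winding, is preserved throughout.

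The main obstacle, and the step I expect to require the most care, is exactly this last point: verifying that a suitable traveling-wave contact Hamiltonian drives $\max_K r$ to $0$ rather than merely redistributing it, and does so without ever fully collapsing the $z$-extent of $\varphi^t(K)$ during the sweep. This is where the transverse arc is indispensable, since it is what allows the unavoidable non-contracting sectors to be ``parked'' on the core $\{r=0\}$, where $X_H=O(r)$ vanishes and the flow acts trivially. Granting this, the bookkeeping for Definition~\ref{defn:squeezingIso} is routine: once $\max_K r\circ\varphi^t\le\delta(t)\to0$ one has $\varphi^t(K)\subset\{r<\delta(t)\}\cap N\subset B_{\epsilon(t)}(T)$ with $\epsilon(t)\to0$, and $\varphi^t(K)$ remains smoothly isotopic to the core $T$ inside this shrinking solid torus because it winds once around it for every $t$.
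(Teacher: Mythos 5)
Your skeleton is genuinely close in spirit to the paper's proof (which reduces, via its Lemmas \ref{lem:1}--\ref{lem:3}, to a model where the knot coincides with a transverse core outside finitely many Darboux balls, and then contracts inside those balls): your idea of ``parking'' the non-contracting sector of the flow on the arc where $K$ already lies on the core plays exactly the role of the paper's Darboux balls. But as written there are genuine gaps. The first is the construction of $T$: you keep ``the already transverse part of $K$'' and spiral only near the tangency locus $Z$. A transverse knot has $\alpha(\dot\gamma)$ of \emph{constant sign}, whereas the transverse part of a general non-Legendrian knot contains both positively and negatively transverse arcs; no single transverse knot can agree with $K$ on both, so your $T$ simply does not exist in general. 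This is precisely the issue the paper's Lemma \ref{lem:2} is designed to remove, by a \emph{preliminary contact isotopy} squeezing the negatively transverse arcs onto Legendrian arcs; a sign-correct fix of your construction (keep only $A$, take a positive transverse approximation elsewhere) is possible but is not what you wrote. Relatedly, your claim that in the model torus $K$ lies ``at small $r>0$ elsewhere'' over the complementary $z$-interval is unjustified: near tangencies and negatively transverse arcs $K$ \emph{backtracks} in $z$, so strands of $K\setminus A$ can pass over the interior of the $z$-interval of $A$ at $r>0$. Any such strand sitting over your stationary/repelling sector is \emph{expanded} by the flow (for $H=f(z)$ one has $\dot z=f(z)$, $\frac{d}{dt}(r^2)=f'(z)r^2$, so a point with $f(z)=0$, $f'(z)>0$, $r>0$ has exponentially growing $r$). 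This can be repaired -- shrink $A$ to a small sub-arc around a point $p$ and use embeddedness of $K$ to choose the approximation and the tube so thin that $K$ meets the fibers over a neighborhood of $z(p)$ only in the core arc -- but that step is needed and absent.

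Second, the heart of the proof is conceded rather than proven: you explicitly write ``granting this'' for the contraction estimate. In fact the non-autonomous ``peristaltic'' Hamiltonian is both unverified and unnecessary: after the repair above, the \emph{autonomous} Hamiltonian $f(z)$ with its repelling zero parked inside the interval of $A$ and an attracting zero $z_*$ in the complementary arc already works, since $r^2/f(z)$ is conserved, all points of $K$ with $r>0$ start a definite $z$-distance from the repelling zero, converge uniformly to $z_*$, and hence have $r\to 0$, while the core arc stays on the core. Third, your final justification -- $\varphi^t(K)$ is isotopic to $T$ inside the thin torus ``because it winds once around it'' -- is false as a general principle: winding number one does not imply isotopy to the core (connect-summing a local knot inside a small ball preserves the winding number). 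The correct argument uses the specific structure of $\varphi^t(K)$ for large $t$: it is a long arc of the core together with an arc properly embedded in a small ball around $z_*$; since $\varphi^t(K)$ is ambient isotopic to $T$ in the \emph{big} torus, its complement has fundamental group $\Z^2$, so the arc in the ball cannot be locally knotted, and the knot is then isotopic to the core inside the thin torus. So your architecture is viable and its contraction mechanism (one global Hamiltonian with a parked zero) is a genuine alternative to the paper's ball-by-ball rescaling, but the proof as submitted has three real holes: the sign issue in constructing $T$, the unproven central contraction step, and the winding-number argument.
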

We show that Legendrians cannot be squeezed onto transverse knots, and therefore, by the transitivity of the squeezing property provided by Part (ii) of Lemma \ref{lem:transitivity}, they also cannot be squeezed onto non-Legendrian knots. 

\begin{ex}
\label{ex:weaksqueezing}
  In certain contact manifolds one can find a contact isotopy, a Legendrian knot $K_0=\Lambda_0$, and a transverse knot $K=T$, that satisfies Part (1) of Definition \ref{defn:squeezingIso}; this is the reason why we want to define squeezing as something stronger than merely what is postulated in Part (1). For such an example, consider the contact manifold given as the ideal boundary $\partial_\infty (\C^* \times \C) \cong S^1 \times S^2$
of the Weinstein manifold $\C^* \times \C$, and the Legendrian core given as a connected component
$$\Lambda_0 \subset \partial_\infty(S^1 \times \Re(\C)) \subset \partial_\infty (\C^* \times \C)$$
of the Legendrian link at infinity. The Legendrian $\Lambda_0$ is shown in the Kirby diagram in Figure \ref{fig:stabilized}. It is homologically essential, and Legendrian isotopic to a two-fold stabilization of itself, consisting of one positive and one negative stabilization; see e.g.~\cite[Figure 19]{DingGeiges2} for more details. Now consider the transverse core given as a connected component
$$T \subset \partial_\infty(\C^* \times \{0\}) \subset \partial_\infty (\C^* \times \C)$$
of a transverse two-component link at infinity. It is possible to $C^0$-approximate $T$ by a sufficiently stabilized Legendrian core in the same smooth isotopy class. For example, the upper figure in Figure \ref{fig:stabilized} depicts a transverse arc that is approximated by a Legendrian with many positive stabilizations. In particular, there is a Legendrian isotopy of the Legendrian core $\Lambda$ into an arbitrarily small neighborhood of the transverse core, so that the Legendrian moreover is smoothly isotopic to the transverse knot inside the same neighborhood. Note that if a Legendrian has many positive and negative stabilizations, then the negative stabilizations can be shrunk arbitrarily, in order to not interfere with the approximation that is made by using the positive stabilizations.
\end{ex}

\begin{figure}[htp]
  \vspace{5mm}
	\labellist
	\pinlabel $z$ at 8 92
	\pinlabel $x$ at 92 7
        \pinlabel $\color{blue}T$ at 138 83
        \pinlabel $\Lambda$ at 95 93
        \pinlabel $\Lambda_0$ at 69 50
        \pinlabel $\Lambda_2$ at 69 36
	\endlabellist
	\includegraphics[scale=1.5]{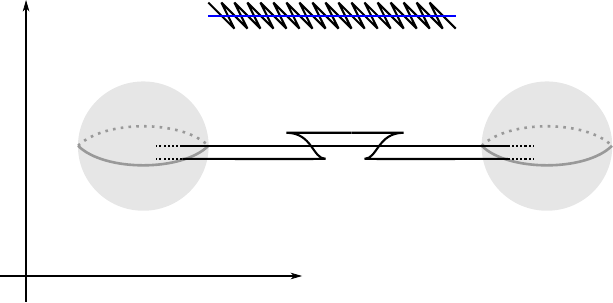}\\
	\caption{Above: a transverse knot $T=\{z=z_0,\: y=-1\}$ can be approximated by a Legendrian knot $\Lambda$ if the latter is sufficiently stabilized (stabilizations correspond to zig-zags). Below: A homologically essential Legendrian knot $\Lambda_0$ inside $\partial_\infty (\C^* \times \C)$, depicted as a Kirby diagram with a single Weinstein one-handle attached to $S^3$. The Legendrian $\Lambda_0$ is Legendrian isotopic to its two-fold stabilization $\Lambda_2$ and thus; by induction, it is Legendrian isotopic to a $2k$-fold stabilization for any $k \ge 0$. }
	\label{fig:stabilized}
\end{figure}

Non-squeezing results are a central theme in symplectic topology, going back to Gromov's famous non-squeezing result in symplectic manifolds \cite{Gromov:Pseudo}. In the field of contact topology, the notion of non-squeezing has been established for certain open subsets of certain contact manifolds by Eliashberg--Kim--Polterovich \cite{Eliashberg:Orderability}. The concept of non-squeezing in the latter article can be seen to be related to the concept studied here. In particular, note that the subsets studied there are solid tori in contact manifolds. In addition, we established a non-squeezing result for certain non-loose Legendrians onto loose Legendrians \cite[Theorem 1.7]{Dimitroglou:Persistence} . 
This result was generalized in \cite[Corollary 1.12]{Lazarev1910.01101}. The aforementioned articles established non-squeezing in arbitrary dimensions using holomorphic curve technology. The results in this article are based on parts of the theory of convex surfaces that so far only has been thoroughly developed in dimension three.

The classification of contact structure on solid tori by Giroux \cite{MR1779622} and Honda \cite{Honda:Classification}, based upon the convex surface theory by Giroux \cite{Giroux}, implies that Legendrian approximations of transverse knots must be stabilized. More precisely:
\begin{thm}[Giroux \cite{MR1779622} and Honda \cite{Honda:Classification}]
  \label{thm:girouxhonda}
For a Legendrian knot $\Lambda$ that lives inside a tubular neighborhood of a transverse knot, with the additional assumption that the two knots are smoothly isotopic inside the same neighborhood, one can give a bound from below on the number of stabilizations that the Legendrian has in terms of the distance from the Legendrian to the transverse knot. Furthermore, this number tends to $+\infty$ as this distance tends to zero.
\end{thm}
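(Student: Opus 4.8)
The plan is to reduce the statement to a local, convex-surface-theoretic upper bound on the Thurston--Bennequin invariant, and then convert that into a lower bound on the stabilization count. First I would place the transverse knot in its standard model: by the transverse neighborhood theorem a tubular neighborhood of $T$ is identified with $((\R/\Z) \times D^2, \ker(d\zeta + r^2\, d\theta))$, under which $T = \{r=0\}$, where $\zeta$ is the longitudinal coordinate and $(r,\theta)$ are polar coordinates on the disk. The concentric tori $T_\rho = \{r=\rho\}$ carry the linear characteristic foliation $\{d\zeta + \rho^2\, d\theta = 0\}$ of slope $-\rho^2$ in the $(\theta,\zeta)$-coordinates; after an arbitrarily $C^\infty$-small perturbation they become convex by Giroux \cite{Giroux}, with dividing set $\Gamma_{T_\rho}$ of essentially that slope. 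The quantitative heart of the argument, which is a direct computation in the model, is that, writing the dividing slope as a reduced fraction $-p/q$ (with $p\ge 1$, so $q\ge \rho^{-2}$), the geometric intersection number of $\Gamma_{T_\rho}$ with a longitude $\lambda = \partial_\zeta$ satisfies $\tfrac12\#(\lambda \cap \Gamma_{T_\rho}) = q(\rho) \asymp \rho^{-2} \to +\infty$ as $\rho \to 0$.

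Next I would bound $\tb(\Lambda)$. A Legendrian ruling curve in the core direction realized on the convex torus $T_\rho$ itself satisfies $\tb = -\tfrac12 \#(\lambda \cap \Gamma_{T_\rho}) = -q(\rho)$, by the standard formula relating the contact framing to the dividing set. The role of the classification of tight contact structures on the solid torus of Giroux \cite{MR1779622} and Honda \cite{Honda:Classification} is precisely to guarantee that this boundary value is the \emph{maximal} Thurston--Bennequin invariant attained by any Legendrian in the core isotopy class anywhere inside $N_\rho(T)$: realizing the core on a deeper concentric torus only makes the dividing slope closer to meridional, hence increases the longitudinal intersection number and decreases $\tb$ further, and tightness (automatic, since the standard model is universally tight) forbids any exotic configuration from exceeding the outermost value. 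Consequently every Legendrian $\Lambda \subset N_\rho(T)$ smoothly isotopic to the core obeys $\tb(\Lambda) \le -\tfrac12 \#(\lambda \cap \Gamma_{T_\rho}) = -q(\rho)$. Since $d(\Lambda,T) < \rho$ forces $\Lambda \subset N_\rho(T)$, this gives $\tb(\Lambda) \to -\infty$ as the distance $d(\Lambda,T) \to 0$.

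Finally I would translate this into stabilizations. Fix the maximal Thurston--Bennequin invariant $\tb_{\max}$ over the smooth knot type of $T$ in $M$; this is finite by the (global) Thurston--Bennequin inequality. Because a single stabilization lowers $\tb$ by exactly one, any Legendrian $\Lambda$ in this knot type carries at least $\tb_{\max} - \tb(\Lambda)$ stabilizations, and by the previous step this is bounded below by $\tb_{\max} + q(\rho)$, a quantity tending to $+\infty$ as $d(\Lambda,T)\to 0$. This both produces the promised lower bound in terms of the distance and shows it diverges as the distance shrinks.

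The main obstacle is the middle step, namely turning the combinatorics of the Giroux--Honda classification into the clean inequality $\tb(\Lambda) \le -q(\rho)$. This requires: first isotoping $\Lambda$ so that the boundary of a standard Legendrian neighborhood $N(\Lambda)$ is convex and concentric inside $N_\rho(T)$; then controlling, via the classification of tight $T^2\times I$, how the dividing slopes and the numbers of dividing curves on the inner torus $\partial N(\Lambda)$ and the outer torus $T_\rho$ interact, so that the bound is genuinely governed by the outer slope and is not weakened by extra dividing curves or by non-convex placements of $\Lambda$; and verifying that no realization deeper in the neighborhood can raise $\tb$. All of this is exactly the content of the convex-surface analysis of \cite{Giroux,MR1779622,Honda:Classification} invoked above.
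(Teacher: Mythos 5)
Your middle step (the twisting bound coming from the dividing slopes of the concentric convex tori) is sound in spirit, but your final step contains a genuine gap: the inference ``because a single stabilization lowers ${\tt tb}$ by exactly one, $\Lambda$ carries at least ${\tt tb}_{\max}-{\tt tb}(\Lambda)$ stabilizations'' runs the implication backwards. What stabilization gives you is: \emph{if} $\Lambda$ is a $k$-fold stabilization, \emph{then} ${\tt tb}(\Lambda)\le {\tt tb}_{\max}-k$; that is an \emph{upper} bound on the number of stabilizations in terms of the ${\tt tb}$-deficit. The theorem asserts a \emph{lower} bound, i.e.\ that $\Lambda$ genuinely destabilizes many times, and a large ${\tt tb}$-deficit does not imply destabilizability in general (there exist non-destabilizable Legendrians with non-maximal ${\tt tb}$ in various knot types). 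To convert your ${\tt tb}$ estimate into actual stabilizations you need an additional classification input, namely that Legendrian knots in the solid torus smoothly isotopic to the core are determined by their classical invariants and are all stabilizations of maximal-${\tt tb}$ representatives (Eliashberg--Fraser \cite{TopTrivLeg} or Ding--Geiges \cite{DingGeiges}); this is exactly the extra step the paper flags in the remark following Theorem~\ref{thm:TransverseNonSqueezing}. A secondary, fixable issue: your ``${\tt tb}_{\max}$ over the smooth knot type of $T$ in $M$'' need not be defined (the knot need not be null-homologous in $M$) nor finite ($M$ may be overtwisted); the whole argument must be phrased with twisting/relative Thurston--Bennequin numbers inside the neighborhood, as in Section~\ref{sec:twisting}.

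The paper's sketch sidesteps this difficulty by producing destabilizations directly rather than passing through a ${\tt tb}$ bound: it takes an embedded convex annulus $A$ with $\partial A=\Lambda\sqcup\Lambda_k$, where $\Lambda_k$ is the standard stabilized Legendrian approximation of $T$ on the boundary torus (Section~\ref{sec:Lambdak}), invokes tightness of a small neighborhood of $T$ to get the minimally twisting property for the convex tori, and then extracts bypass half-disks in $A$ along the boundary component $\Lambda$; each bypass is a destabilization of $\Lambda$. So either adopt that bypass argument, or keep your ${\tt tb}$-estimate route but explicitly finish with the Eliashberg--Fraser/Ding--Geiges classification; as written, your proposal proves only the ${\tt tb}$-bound statement, not the stabilization statement.
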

\begin{rem}When the Legendrian knot is null-homologous, and thus has a well-defined Thurston--Bennequin invariant, it immediately follows from the aforementioned result that Legendrians cannot be squeezed onto transverse knots. Section \ref{sec:TransverseNonSqueezing} is dedicated to extending this result from transverse to arbitrary non-Legendrian knots.
\end{rem}
To the authors' knowledge, Theorem \ref{thm:girouxhonda} has not been explicitly stated in the literature. Since our work here do not rely on the above result, but rather use weaker results in the same spirit that concern relative Thurston--Bennequin numbers, we only provide a brief sketch of the ideas that go into the proof.
\begin{proof}[Sketch of proof of Theorem \ref{thm:girouxhonda}]
  Consider a Legendrian $\Lambda$ which is close to a transverse knot $T$ in the same isotopy class. By Giroux' theory of convex surfaces \cite{Giroux}, one can produce an embedded convex annulus $A$ inside the normal neighborhood of the transverse knot with boundary $\partial A=\Lambda \sqcup \Lambda_k$. Here $\Lambda_k$ is the standard $k$-fold stabilized Legendrian approximation of the transverse knot $T$ described in Section \ref{sec:Lambdak}, which is contained on the boundary of a tubular neighborhood of $T$, while $\Lambda$ is contained in the interior of the neighborhood. 
  
We use the language of \cite{Honda:Classification}.  A sufficiently small tubular neighborhood of the transverse knot is tight. So the dividing curves of the convex tori inside this neighborhood satisfy the minimally twisting property.
Consider the dividing curves of the annulus $A.$
The minimally twisting property implies the existence of bypass half-disks in $A$ for the boundary component $\Lambda \subset \partial A.$
The bypass half-disks give the sought destabilizations of $\Lambda.$ 
\end{proof}
 The proof of our non-squeezing result Theorem \ref{thm:TransverseNonSqueezing} does not rely on the fact that a Legendrian that is close to a transverse knot in the same isotopy class must be stabilized; however, the proof establishes that its relative Thurston--Bennequin number admits a bound from above, where this bounds moreover tends to $-\infty$ as the distance to the transverse knot tends to zero.  If one would like to deduce the existence of stabilizations for the knot, one could subsequently use the classification result for Legendrian knots by Eliashberg--Fraser \cite{TopTrivLeg} or Ding--Geiges \cite{DingGeiges}.
 
It turns out that the only ingredient from the classification of contact structures that is needed for Theorem \ref{thm:TransverseNonSqueezing} is the \emph{Thurston--Bennequin inequality for Legendrian unknots in $\R^3$} as proven by Bennequin in \cite{Bennequin:Entrelacements}. Of course, this inequality is also highly non-trivial, as it e.g.~implies that the standard contact 3-sphere is tight. 

In order to deduce properties for $C^0$-limits of Legendrian knots, we need to consider a weaker notion of squeezing. One of the crucial results is that Legendrians also cannot be squeezed onto non-Legendrians in this weaker sense.
\begin{defn}
  \label{defn:squeezing}
We say that the \emph{sequence of contactomorphisms} $\varphi_i \colon (M,\xi) \to (M,\xi)$ {\bf squeezes} $K_0 \subset M$ {\bf onto} $K \subset M,$ where $K_0$ and $K$ are submanifolds, if the following holds.
\begin{enumerate}
\item 
There exists $\epsilon_i>0$ with $\lim_{i \to +\infty} \epsilon_{i} = 0$ such that for all $i \gg 0,$ $\varphi_{i}(K_0) \subset B_{\epsilon_i}(K)$ and $\varphi_{i}(K_0)$ is smoothly isotopic to $K$ inside $B_{\epsilon_i}(K).$  
\item 
For any $r > 0$ and $\epsilon>0$, there exists some $i_{r,\epsilon} \gg 0$ such that
  $$d(\varphi_i \circ \varphi_{j}^{-1}(x),x) < \epsilon$$
  for all $i \ge j \ge i_{r,\epsilon}$ and $x \in M \setminus B_{r}(K)$.

\end{enumerate}
\end{defn}

Part (2) of Definition \ref{defn:squeezing} is the counterpart of the second part of Definition \ref{defn:squeezingIso}. As in Example \ref{ex:weaksqueezing} one can produce a sequence of contactomorphisms for which a Legendrian knot $K_0=\Lambda_0$ and transverse knot $K=T$ satisfies Part (1) of Definition \ref{defn:squeezing}; this is the reason why we want to require something stronger for the notion of squeezing. We are not sure if Part (2) is the most natural definition if one wants a notion of squeezing that precludes the possibility of squeezing a Legendrian onto a transverse knot. However, as we prove in Section \ref{sec:TransitivityProof}, one good feature of the above definition is that the existence of squeezing sequences become transitive in the following manner.

\begin{lem}
  \label{lem:transitivity}
  \begin{enumerate}[label=(\roman*)]\item If there exists a contact isotopy $\psi_t \colon M \to M$ that squeezes a submanifold $K_0 \subset M$ onto a submanifold $K \subset M$ (see Definition \ref{defn:squeezingIso}) then one can produce a squeezing sequence $\varphi_i \colon M \xrightarrow{\cong} M$ of contactomorphisms of $K_0$ onto $K$ (see Definition \ref{defn:squeezing}). Moreover, the support of $\varphi_i$ can be assumed to be contained inside the support of $\psi_t$.
  \item Consider two sequences of contactomorphisms
    $$\varphi^{(\nu)}_i \colon M \to M, \:\: \nu =1,2,$$
    where $\{\varphi^{(\nu)}_i\}$ squeezes $K_\nu$ onto $K_{\nu-1}$. Then there exists a suitable re-indexing $\alpha(i)\ge i$ for which
    $$ \varphi^{(1)}_i \circ \varphi^{(2)}_{\alpha(i)} \colon M \to M $$
    is a sequence of contactomorphisms that squeezes $K_2$ onto $K_0$.
    \item The property of either an isotopy or a sequence of contactomorphisms to squeeze a submanifold $K_0$ onto $K$ does not depend on the choice of metric.
\end{enumerate}
\end{lem}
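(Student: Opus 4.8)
The plan is to treat the three parts separately, constructing the sequences by hand and reducing every assertion to the two defining properties in Definitions \ref{defn:squeezingIso} and \ref{defn:squeezing}.

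For Part (i) I would \emph{not} simply set $\varphi_i=\psi^{t_i}$ for times $t_i\to+\infty$: this does give Part (1) of Definition \ref{defn:squeezing} with $\epsilon_i=\epsilon(t_i)$, but the difference $\psi^{t_i}\circ(\psi^{t_j})^{-1}$ need not be $C^0$-small away from $K$, so Part (2) can fail. Instead I would build the sequence telescopically using cutoffs. Choose times $t_1<t_2<\cdots\to+\infty$ and radii $r_1>r_2>\cdots\to 0$ with $\psi^t(K_0)\subset B_{r_i}(K)$ for all $t\ge t_i$; this is possible since $\epsilon(t)\to 0$. Put $g_1=\psi^{t_1}$, and for $k\ge 2$ let $g_k$ be obtained by truncating the portion of $\psi$ over $[t_{k-1},t_k]$, i.e.\ by multiplying the generating contact Hamiltonian by a spatial cutoff that equals $1$ on a neighborhood of the compact swept set $\bigcup_{t\in[t_{k-1},t_k]}\psi^t(K_0)\subset B_{r_{k-1}}(K)$ and vanishes outside $B_{r_{k-1}}(K)$. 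Since any smooth function is a legitimate contact Hamiltonian, $g_k$ is again a contactomorphism, supported in $B_{r_{k-1}}(K)\subset\operatorname{supp}\psi$; and because the cutoff is identically $1$ along the knot's trajectory, the truncated flow agrees with the untruncated one there, so $g_k$ carries $\psi^{t_{k-1}}(K_0)$ exactly to $\psi^{t_k}(K_0)$. Setting $\varphi_i=g_i\circ\cdots\circ g_1$ then gives $\varphi_i(K_0)=\psi^{t_i}(K_0)$, verifying Part (1). The decisive feature is that for $i\ge j$ one has $\varphi_i\circ\varphi_j^{-1}=g_i\circ\cdots\circ g_{j+1}$, supported in $B_{r_j}(K)$, hence equal to the identity on $M\setminus B_{r_j}(K)$; thus Part (2) holds (with distance $0$) as soon as $r_j\le r$. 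The one point to check carefully is trajectory-invariance under truncation, which follows from uniqueness of solutions of the generating ODE together with the fact that $\psi^t(K_0)$ stays inside $B_{r_{k-1}}(K)$ for $t\ge t_{k-1}$.

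For Part (ii) write $\Phi_i=\varphi^{(1)}_i\circ\varphi^{(2)}_{\alpha(i)}$. Part (1) of the definition is handled by continuity and a large choice of $\alpha(i)$: for fixed $i$ the map $\varphi^{(1)}_i$ is uniformly continuous, so taking $\alpha(i)$ large enough that $\varphi^{(2)}_{\alpha(i)}(K_2)$ lies in a sufficiently small neighborhood of $K_1$ forces $\varphi^{(1)}_i\big(\varphi^{(2)}_{\alpha(i)}(K_2)\big)$ into $B_{\delta_i}(K_0)$ with $\delta_i\le 2\,\epsilon^{(1)}_i\to 0$; concatenating the $\varphi^{(1)}_i$-image of the smooth isotopy near $K_1$ with the smooth isotopy near $K_0$ shows $\Phi_i(K_2)$ is smoothly isotopic to $K_0$ inside $B_{\delta_i}(K_0)$. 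For Part (2) I would use the factorization
$$\Phi_i\circ\Phi_j^{-1}=\Big[\varphi^{(1)}_i\circ(\varphi^{(1)}_j)^{-1}\Big]\circ\Big[\varphi^{(1)}_j\circ P\circ(\varphi^{(1)}_j)^{-1}\Big],\qquad P:=\varphi^{(2)}_{\alpha(i)}\circ(\varphi^{(2)}_{\alpha(j)})^{-1}.$$
The first bracket is $C^0$-small away from $K_0$ directly by Part (2) for $\{\varphi^{(1)}\}$. The second bracket is a conjugate of $P$ by the \emph{lower}-index map $\varphi^{(1)}_j$; since $P$ is $C^0$-small away from $K_1$ by Part (2) for $\{\varphi^{(2)}\}$, this conjugate is small away from $K_0$ provided $P$ is small enough to beat the modulus of continuity of $\varphi^{(1)}_j$ and of $(\varphi^{(1)}_j)^{-1}$. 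A routine triangle-inequality bookkeeping with nested radii then combines the two factors.

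The main obstacle is precisely this last point: the moduli of continuity of $\varphi^{(1)}_j$ and its inverse may degenerate as $j\to+\infty$, so no fixed smallness of $P$ works uniformly, and this is exactly what the freedom in the reindexing is for. For each fixed $j$, uniform continuity of $\varphi^{(1)}_j$ and $(\varphi^{(1)}_j)^{-1}$ converts a target $(r,\epsilon)$ near $K_0$ into a requirement that $P$ be $\epsilon''(j,\epsilon)$-close to the identity outside $B_{r'(j,r)}(K_1)$, which by Part (2) for $\{\varphi^{(2)}\}$ holds once $\alpha(j)$ exceeds a threshold $N(j,r,\epsilon)$. I would then define $\alpha$ by a diagonal procedure, taking $\alpha(j)\ge j$ increasing and large enough to dominate $N(j,1/n,1/n)$ for all $n\le j$ (and also to meet the Part (1) requirement). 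Monotonicity of the thresholds in $r$ and $\epsilon$ then furnishes, for each $(r,\epsilon)$, an index $i_{r,\epsilon}$ beyond which both brackets are small, so a single reindexing works simultaneously for all $r,\epsilon$.

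Finally, Part (iii) is soft. All conditions in both definitions amount to: upper Hausdorff convergence of $\varphi_i(K_0)$ to the \emph{compact} set $K$, smooth isotopies inside arbitrarily small neighborhoods of $K$, and uniform $C^0$-smallness of $\varphi_i\circ\varphi_j^{-1}$ on complements of neighborhoods of $K$. The first two are manifestly topological. For the third I would use that any two metrics inducing the given topology are uniformly equivalent on a fixed compact neighborhood of $K$, so that the nested systems $\{B_r(K)\}$ for the two metrics are cofinal in one another and $C^0$-smallness transfers; reducing to a compact region containing $K$ (for instance a tubular neighborhood, as in Theorem \ref{thm:squeezing}) makes this precise. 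Hence the squeezing property is independent of the metric.
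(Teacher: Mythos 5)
Your proposal is correct and follows essentially the same route as the paper: Part (i) by cutting off the generating contact Hamiltonian so that $\varphi_i\circ\varphi_j^{-1}$ becomes the identity outside small balls around $K$, Part (ii) by composing the two sequences with a suitably fast-growing re-indexing $\alpha$ and using that the preimages $(\varphi^{(1)}_j)^{-1}(B_r(K_0))$ all contain a fixed neighborhood of $K_1$, and Part (iii) as a soft topological statement. Your explicit factorization of $\Phi_i\circ\Phi_j^{-1}$ with conjugation by the \emph{lower}-index map $\varphi^{(1)}_j$, together with the diagonal choice of $\alpha$ to beat the degenerating moduli of continuity, is precisely the bookkeeping that the paper compresses into ``it is then readily checked.''
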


We establish the non-squeezing result for Legendrian knots onto transverse knots.

  \begin{mainthm}
    \label{thm:TransverseNonSqueezing}
  Let $\Lambda \subset (M,\xi)$ be a Legendrian knot. If $T \subset (M,\xi)$ is a transverse knot, then there does not exist any sequence of contactomorphisms that squeezes $\Lambda$ onto $T$ (see Definition \ref{defn:squeezing}).
\end{mainthm}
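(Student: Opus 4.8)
The plan is to argue by contradiction, supposing that a sequence $\varphi_i$ squeezes $\Lambda$ onto $T$, and to derive a contradiction from the behaviour of a relative Thurston--Bennequin number. First I would localize. By Part (1) of Definition \ref{defn:squeezing}, for $i \gg 0$ the Legendrian $\varphi_i(\Lambda)$ lies inside $B_{\epsilon_i}(T)$ with $\epsilon_i \to 0$ and is smoothly isotopic to $T$ there. Fix once and for all a standard tight tubular neighborhood $N \cong S^1 \times D^2$ of $T$ of radius $r_0$, together with its product framing (the framing for which a meridian of $\partial N$ bounds a disk in $N$) and the standard stabilized approximations $\Lambda_k$ of Section \ref{sec:Lambdak}. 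Since the core generates $H_1(N) \cong \Z$, every Legendrian in $N$ that is isotopic to the core has a well-defined relative Thurston--Bennequin number $\tb$ measured against this product framing, even when $\Lambda$ is not null-homologous in $M$; this is what replaces the absolute invariant used in the Remark following Theorem \ref{thm:girouxhonda}.

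The argument has two halves: an upper bound and a rigidity (constancy) statement for $\tb(\varphi_i(\Lambda))$. For the \textbf{upper bound} I would establish
\[
\tb(\varphi_i(\Lambda)) \le g(\epsilon_i), \qquad g(\epsilon) \xrightarrow{\epsilon \to 0} -\infty .
\]
This is the genuine contact-topological input, and the step I expect to be the main obstacle. The mechanism is the Thurston--Bennequin inequality for Legendrian unknots in $\R^3$: inside the thin solid torus $B_{\epsilon_i}(T)$ the characteristic foliation of a convex meridian disk, together with the intersections of $\varphi_i(\Lambda)$ with that disk, allows one to cut out a Legendrian unknot. If $\tb(\varphi_i(\Lambda))$ exceeded the $\tb$ of the standard approximation $\Lambda_k$ sitting just outside radius $\epsilon_i$, this unknot would violate $\tb \le -1$, contradicting tightness of $N$. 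Since the standard approximation forces $k \to \infty$ as $\epsilon_i \to 0$ (each stabilization lowering $\tb$ by one), the resulting ceiling $g(\epsilon_i)$ tends to $-\infty$. This is exactly the assertion, flagged before the statement, that the relative Thurston--Bennequin number admits a bound from above that tends to $-\infty$.

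For the \textbf{rigidity} half I would prove that $\tb(\varphi_i(\Lambda))$ is in fact independent of $i$ for $i \gg 0$, which immediately contradicts the upper bound. Write $\Psi_{ij} = \varphi_i \circ \varphi_j^{-1}$, so that $\varphi_i(\Lambda) = \Psi_{ij}(\varphi_j(\Lambda))$. Since $\Psi_{ij}$ is a contactomorphism it preserves the contact framing, whence the value of $\tb$ of $\varphi_i(\Lambda)$ measured against the product framing of the solid torus $\Psi_{ij}(N)$ equals $\tb_N(\varphi_j(\Lambda))$. It therefore suffices to check that the product framings of $N$ and of $\Psi_{ij}(N)$ agree along $\varphi_i(\Lambda)$. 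Here Part (2) of Definition \ref{defn:squeezing} enters: choosing $r < r_0$, it guarantees that $\Psi_{ij}$ is $C^0$-close to the identity near $\partial N$ for $i \ge j \gg 0$. A $C^0$-small self-homeomorphism of a collar of $\partial N$ acts trivially on $H_1$, so $\Psi_{ij}$ carries the meridian of $N$ to a curve isotopic to the meridian; hence it sends the product framing of $N$ to the product framing of $\Psi_{ij}(N)$, and the two agree along $\varphi_i(\Lambda) \subset N \cap \Psi_{ij}(N)$. Consequently $\tb_N(\varphi_i(\Lambda)) = \tb_N(\varphi_j(\Lambda))$ is constant in $i$, contradicting $g(\epsilon_i) \to -\infty$ and completing the proof.

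A secondary technical point worth isolating as a lemma is the framing comparison in the rigidity step: one must verify that the meridional winding of $\Psi_{ij}$ is controlled purely by its behaviour on $M \setminus B_r(T)$, where Part (2) supplies uniform $C^0$-control, and \emph{not} by its entirely uncontrolled behaviour on the thin region $B_r(T)$ that contains the knots. This is precisely the feature of Definition \ref{defn:squeezing} that rules out the weak squeezings of Example \ref{ex:weaksqueezing}, in which the additional negative stabilizations are created by behaviour that Part (2) would forbid. The delicate part of the whole argument remains the local Thurston--Bennequin inequality of the upper bound, since $C^0$-closeness controls no derivatives and the usable contact-topological constraint must be harvested from the tightness of $N$ alone.
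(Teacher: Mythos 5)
Your rigidity half is essentially correct: transporting the reference framing using Part (2) of Definition \ref{defn:squeezing} is exactly how the paper uses that axiom (there it is phrased via reference knots and Corollary \ref{cor:tbdifference} rather than meridians, which handles the two-chains more cleanly), and you correctly identify this as the feature that rules out Example \ref{ex:weaksqueezing}. The gap is your upper-bound half. The statement you need --- every Legendrian contained in $B_{\epsilon}(T)$ and smoothly isotopic to the core there has relative ${\tt tb}$ at most $g(\epsilon)$ with $g(\epsilon)\to-\infty$ --- is not something your sketched mechanism delivers. Applying the Bennequin inequality inside the tight neighborhood, which is what ``cutting out an unknot and using ${\tt tb}\le -1$'' amounts to, yields only the \emph{fixed} bound ${\tt tb}_{K}(\Lambda')\le -1+{\tt lk}(\Lambda',K)$ of Lemma \ref{lem:prequant}; it carries no dependence on $\epsilon$ whatsoever. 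An $\epsilon$-dependent ceiling is precisely the content of Theorem \ref{thm:girouxhonda} (equivalently, the minimal-twisting computation for thin tight solid tori), whose proof requires convex annuli, the minimally twisting property, and bypasses --- machinery the paper only sketches and explicitly declines to rely on. So, as written, the crucial quantitative step of your proof is an assertion of a hard theorem rather than a proof of it.

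The paper is structured specifically to avoid this. Instead of bounding ${\tt tb}$ of the squeezed Legendrian by a function of the radius, it composes (via the transitivity Lemma \ref{lem:transitivity}) the hypothetical squeezing sequence with an explicit contact isotopy squeezing $T$ onto the stabilized model $\Lambda_k$ of Section \ref{sec:Lambdak}, with $k$ arbitrarily large. Part (1) of Definition \ref{defn:squeezing} then places the image of the Legendrian inside an honest standard neighborhood $J^1\Lambda_k$, isotopic to the zero section there, where the relative Bennequin inequality (Lemma \ref{lem:tbineq}) applies with no convex-surface input; Lemma \ref{lem:tbnumbers} converts this into ${\tt tb}_{K}(\Lambda_m)-{\tt tb}_{K}(\text{image})\ge k-m$ for a fixed small $m$. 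Pulling $\Lambda_m$ back under the inverse contactomorphisms (this is where Part (2) and Corollary \ref{cor:tbdifference} enter, playing the role of your framing-transport step) produces Legendrians in $B_{2r}(T)$, isotopic to the core, whose relative ${\tt tb}$ tends to $+\infty$ as $k\to\infty$, contradicting the fixed ceiling of Lemma \ref{lem:prequant}. In other words, the contradiction is run in the opposite direction from yours --- pullback knots with unbounded ${\tt tb}$ against a fixed upper bound --- precisely so that the only contact-topological input is Bennequin's inequality for unknots. To complete your argument you would either have to prove the Giroux--Honda-type bound in full, or reorganize it along these lines.
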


In the case when $H_1(M)=H_2(M)=0$, so that the Thurston--Bennequin number of any Legendrian knot is well-defined, this non-squeezing result can be seen to follow directly from Theorem \ref{thm:girouxhonda}. 
For the general statement, the main ingredient is the Thurston--Bennequin inequality for Legendrian knots in standard $\R^3$ proven by Bennequin \cite{Bennequin:Entrelacements} (or, more precisely, a relative formulation for unknotted Legendrian cores of the solid torus $J^1S^1$).

In combination with the existence of squeezing of non-Legendrians onto transverse knots proven by Theorem \ref{thm:squeezing} above, we obtain the following non-squeezing for Legendrians into a neighborhood of a non-Legendrian.

\begin{maincor}
  \label{cor:nonsqueezing}
  Let $\Lambda \subset (M^3,\xi)$ be a Legendrian knot. If $K \subset (M^3,\xi)$ is a non-Legendrian knot, then there does not exist a sequence of contact embeddings $\varphi_i \colon M \to M$ that squeezes $\Lambda$ onto $K$.
\end{maincor}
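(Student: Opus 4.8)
The plan is to argue by contradiction and to reduce the general non-Legendrian target $K$ to the transverse case that has already been ruled out by Theorem \ref{thm:TransverseNonSqueezing}. The two ingredients that make this reduction possible are the flexibility of non-Legendrian knots (Theorem \ref{thm:squeezing}), which provides a transverse knot onto which $K$ can be squeezed, and the transitivity of the squeezing relation (Lemma \ref{lem:transitivity}), which lets us chain squeezings together. Concretely, suppose toward a contradiction that there is a sequence $\varphi_i \colon M \to M$ that squeezes $\Lambda$ onto $K$ in the sense of Definition \ref{defn:squeezing}.

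First I would invoke Theorem \ref{thm:squeezing}: since $K$ is non-Legendrian, there is a transverse knot $T \subset (M,\xi)$ and a contact isotopy $\psi^t \colon M \to M$ squeezing $K$ onto $T$. By Part (i) of Lemma \ref{lem:transitivity}, this isotopy-squeezing can be upgraded to a squeezing sequence of contactomorphisms, which I will call $\{\varphi^{(1)}_i\}$, squeezing $K$ onto $T$. Setting $\{\varphi^{(2)}_i\} \coloneqq \{\varphi_i\}$, which by hypothesis squeezes $\Lambda$ onto $K$, I am in exactly the situation of Part (ii) of Lemma \ref{lem:transitivity} with the labelling $K_2 = \Lambda$, $K_1 = K$, and $K_0 = T$: the sequence $\{\varphi^{(1)}_i\}$ squeezes $K_1 = K$ onto $K_0 = T$, while $\{\varphi^{(2)}_i\}$ squeezes $K_2 = \Lambda$ onto $K_1 = K$.

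Applying the transitivity lemma then produces a re-indexing $\alpha(i) \ge i$ so that the composite sequence $\varphi^{(1)}_i \circ \varphi^{(2)}_{\alpha(i)} \colon M \to M$ squeezes $K_2 = \Lambda$ onto $K_0 = T$. But this is precisely a squeezing of a Legendrian knot $\Lambda$ onto a transverse knot $T$, which Theorem \ref{thm:TransverseNonSqueezing} forbids. This contradiction shows no such $\{\varphi_i\}$ can exist, proving the corollary.

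I expect the substantive content to lie entirely in the cited results, so the corollary is essentially a formal consequence; the only points demanding genuine care are bookkeeping ones. The first is keeping the order of composition and the direction of each squeezing straight, so that the hypotheses of Lemma \ref{lem:transitivity}(ii) are met with $K_1 = K$ as the common intermediate knot. The second, and the most likely source of friction, is matching the category of maps in the statement (phrased for contact embeddings) with the contactomorphisms used in Definition \ref{defn:squeezing} and in the transitivity lemma; I would note that the transitivity argument and the non-squeezing obstruction of Theorem \ref{thm:TransverseNonSqueezing} only use that the maps are contact and that the quantitative conditions (1)--(2) of Definition \ref{defn:squeezing} hold, so the same reduction applies verbatim once one restricts attention to the relevant neighborhood of the knots.
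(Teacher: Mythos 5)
Your proposal is correct and follows essentially the same route as the paper: contradiction, Theorem \ref{thm:squeezing} to squeeze $K$ onto a transverse knot $T$, then Lemma \ref{lem:transitivity} (Parts (i) and (ii), with the labelling $K_2=\Lambda$, $K_1=K$, $K_0=T$ exactly as required) to obtain a sequence squeezing $\Lambda$ onto $T$, contradicting Theorem \ref{thm:TransverseNonSqueezing}. The paper's proof is just a terser version of the same argument, so there is nothing to add.
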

\begin{proof}
  Assume that there exists a sequence of contactomorphisms that squeezes $\Lambda$ onto $K$. Apply Theorem \ref{thm:squeezing} to produce a contact isotopy that squeezes $K$ onto a transverse knot $T$. By Lemma \ref{lem:transitivity} we can find a sequence of contactomorphisms that squeezes $\Lambda$ onto $T$; this is in contradiction with Theorem \ref{thm:TransverseNonSqueezing}.
\end{proof}

\begin{rem}
  In contact manifolds of dimension $2n+1\ge 5$ the result analogous to Corollary \ref{cor:nonsqueezing} does not hold: there are contact isotopies that squeeze certain Legendrians onto non-Legendrians. Such examples can be constructed by alluding to Murphy's $h$-principle for loose Legendrians \cite{LooseLeg}. Namely, by this $h$-principle we can approximate any $n$-dimensional non-Legendrian submanifold by a loose Legendrian while keeping control of its formal Legendrian isotopy class. The loose Legendrian approximations are moreover Legendrian isotopic by the same $h$-principle.

The main difference between high dimensions and dimension $2n+1=3$ in this respect is that, in the low dimensional case, one cannot add stabilizations inside a sufficiently small neighborhood of a transverse knot (or, more, generally non-Legendrian knot) without decreasing the relative Thurston--Bennequin number.
\end{rem}

In symplectic geometry the existence of capacities for Lagrangian submanifolds defined by Floer homology has given rise to many rigidity phenomena of a quantitative nature. In particular, in \cite{LaudenbachSikorav:HamiltonianDisjunction} Laudenbach--Sikorav showed that Lagrangians cannot be placed inside neighborhoods of non-Lagrangians. This result can be used to show that smooth limits of Lagrangians under a sequence of symplectomorphism that converge to a homeomorphism must again be Lagrangian. The analogous result for coisotropic manifolds was shown in codimension one by Opshtein \cite{Opshtein:C0}. The full answer was later given by Humili\`ere--Leclercq--Seyfaddini who established the analogous result for arbitrary coisotropic submanifolds in \cite{Humiliere:Coisotropic}.
The analogous questions in the setting of contact topology have only seen partial results \cite{Nakamura:C0, RosenZhang,  Usher:C0}.
Using the above non-squeezing result we settle the question in dimension three.
\begin{mainthm}
\label{thm:c0}
Let $(M^3,\xi)$ be a three-dimensional contact manifold and $\varphi_i \in Cont(M,\xi)$ a sequence of contactomorphisms that converge in $C^0$-norm $\varphi_i \to_{C^0} \varphi_\infty$ to a homeomorphism $\varphi_\infty$. Let $\Lambda \subset (M,\xi)$ be a Legendrian knot whose image $\varphi_\infty(\Lambda)$ is a smooth knot. Then $\varphi_\infty(\Lambda)$ is Legendrian as well. In addition, there exists a globally defined smooth contactomorphism of $M$ that maps $\Lambda$ to $\varphi_\infty(\Lambda)$.
\end{mainthm}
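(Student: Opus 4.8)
The strategy is to argue by contradiction: suppose $\varphi_\infty(\Lambda)$ is a smooth knot that is \emph{not} Legendrian. Since ``not Legendrian'' means precisely ``non-Legendrian'' in the sense defined in the introduction (there is some point where the tangent line escapes the contact hyperplane), this puts us exactly in the hypothesis of Theorem \ref{thm:squeezing}. So I would set $K \coloneqq \varphi_\infty(\Lambda)$, a non-Legendrian knot, and aim to derive a squeezing of the \emph{Legendrian} $\Lambda$ onto $K$ out of the $C^0$-convergence $\varphi_i \to_{C^0} \varphi_\infty$. This would contradict Corollary \ref{cor:nonsqueezing}, which says a Legendrian can never be squeezed onto a non-Legendrian knot in dimension three.

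The key step is therefore to manufacture a squeezing \emph{sequence} in the sense of Definition \ref{defn:squeezing} out of mere uniform convergence. The natural candidate is the sequence $\varphi_i$ itself. Part (1) should come almost for free: since $\varphi_i \to_{C^0} \varphi_\infty$ uniformly, the images $\varphi_i(\Lambda)$ eventually lie in any prescribed tubular neighborhood $B_{\epsilon_i}(K)$ of $K=\varphi_\infty(\Lambda)$ with $\epsilon_i \to 0$; and because each $\varphi_i$ is a genuine contactomorphism (in particular a diffeomorphism) $C^0$-close to $\varphi_\infty$, the knot $\varphi_i(\Lambda)$ is smoothly isotopic to $K$, and one expects this isotopy to be realizable inside the small neighborhood once $i$ is large. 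The more delicate point is Part (2), the uniform Cauchy-type estimate on $\varphi_i \circ \varphi_j^{-1}$ away from $K$. Here the idea is that $\varphi_i \circ \varphi_j^{-1} \to_{C^0} \varphi_\infty \circ \varphi_\infty^{-1} = \id$ as $i,j \to \infty$; more precisely, uniform convergence of $\varphi_i$ together with uniform continuity of $\varphi_\infty^{-1}$ on compacta (and of the approximating inverses) should give $d(\varphi_i \circ \varphi_j^{-1}(x), x) \to 0$ uniformly in $x$. This is even stronger than what Part (2) demands, since Part (2) only needs the estimate outside $B_r(K)$.

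Once the squeezing sequence is in hand, the first assertion of the theorem follows immediately by contradiction as above. For the second assertion — that there is a \emph{globally defined smooth} contactomorphism taking $\Lambda$ to $\varphi_\infty(\Lambda)$ — I would argue that both $\Lambda$ and $\varphi_\infty(\Lambda)$ are now Legendrian knots that are smoothly isotopic (indeed, $\varphi_\infty(\Lambda)$ is a $C^0$-limit of the ambient diffeomorphic images $\varphi_i(\Lambda)$, so they lie in the same smooth isotopy class), and then invoke a contact-geometric normal-form / isotopy-extension result: a smooth isotopy carrying one Legendrian to another can be $C^0$-perturbed to a Legendrian isotopy, which integrates (via the contact isotopy extension theorem) to a globally defined contactomorphism. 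The main technical care is in ensuring the smooth isotopy class is genuinely preserved in the $C^0$-limit.

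The step I expect to be the main obstacle is Part (1) of the squeezing definition, specifically the requirement that $\varphi_i(\Lambda)$ be smoothly isotopic to $K$ \emph{inside the small ball} $B_{\epsilon_i}(K)$, rather than merely being $C^0$-close to it. A priori, $C^0$-convergence controls only positions of points, not tangent directions, so while $\varphi_i(\Lambda)$ sits inside a thin neighborhood of $K$, there is no immediate reason its embedded isotopy type within that neighborhood matches that of $K$; one must leverage the fact that each $\varphi_i$ is an honest diffeomorphism (so $\varphi_i(\Lambda)$ is embedded and unknotted relative to the core in the expected way) together with the smooth structure of $K$ as a limit to promote the $C^0$-proximity to a bona fide isotopy confined to $B_{\epsilon_i}(K)$. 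Establishing this confinement rigorously is where the real work lies.
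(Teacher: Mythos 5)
Your treatment of the first assertion follows the same route as the paper: the sequence $\varphi_i$ itself squeezes $\Lambda$ onto $K=\varphi_\infty(\Lambda)$ (Part (2) of Definition \ref{defn:squeezing} being immediate from uniform convergence, exactly as you say), and if $K$ were non-Legendrian this would contradict Corollary \ref{cor:nonsqueezing}. The point you flag as the main obstacle --- that $\varphi_i(\Lambda)$ must be smoothly isotopic to $K$ \emph{inside} $B_{\epsilon_i}(K)$ --- is indeed where the work lies, but you leave it unresolved, and the hint you give (leveraging that $\varphi_i$ is a diffeomorphism) is not how it is done: being an embedded knot $C^0$-close to $K$ does not by itself confine an isotopy to the neighborhood. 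The paper's Lemma \ref{lem:smoothiso} settles this purely topologically: using nested tubular neighborhoods $\mathcal{N}_1\subsetneq\mathcal{N}_2\subsetneq\mathcal{N}$ and the $C^0$-proximity of $\varphi_i$ to the homeomorphism $\varphi_\infty$, one shows that $\pi_1(\mathcal{N}\setminus\varphi_i(\Lambda))\cong\Z^2$ (the image of the torus $\partial\mathcal{N}_\Lambda$ forces the rank to be at least two, while the outer region surjects onto this fundamental group), and then one quotes the classical fact that the Hopf link is detected by the fundamental group of its complement.

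The genuine gap is in your second assertion. You claim that since $\Lambda$ and $\varphi_\infty(\Lambda)$ are smoothly isotopic Legendrians, ``a smooth isotopy carrying one Legendrian to another can be $C^0$-perturbed to a Legendrian isotopy,'' which then integrates to an ambient contactomorphism. This is false, and its failure is precisely the subject of Legendrian knot theory: smoothly isotopic Legendrian knots need not be Legendrian isotopic, since they can have different Thurston--Bennequin or rotation numbers (a stabilized unknot versus the standard unknot), and even with equal classical invariants there are non-isotopic pairs (Chekanov's examples). What is true is that any knot admits $C^0$-close Legendrian approximations; this does not let you deform a given smooth isotopy between two \emph{fixed} Legendrians into a Legendrian isotopy. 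The paper instead proves Proposition \ref{prop:Classify}: it exploits squeezing in \emph{both} directions --- the inverse sequence $(\varphi_i\circ\varphi_{i_0}^{-1})^{-1}$ squeezes $K$ onto $\varphi_{i_0}(\Lambda)$ --- so that the relative Bennequin inequality (Lemma \ref{lem:tbineq}) applies twice and yields the equality ${\tt tb}_{K'}(\varphi_{i_0}(\Lambda))={\tt tb}_{K'}(K)$ of relative Thurston--Bennequin numbers inside $J^1K$; a further Bennequin-inequality argument forces the rotation number to vanish. Only then, by the Ding--Geiges classification of Legendrian knots in $J^1S^1$, is $\varphi_i(\Lambda)$ Legendrian isotopic to $K$ inside the standard neighborhood, and that Legendrian isotopy extends to the sought global contactomorphism. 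Without this computation of invariants your argument cannot produce the contactomorphism, and indeed nothing in your proposal rules out the (a priori possible) scenario that the limit is a Legendrian in the same smooth class but with different classical invariants.
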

Nakamura proves the first statement in Theorem \ref{thm:c0} for arbitrary dimension assuming that for some contact form there exists a uniform lower bound on the lengths of the Reeb chords from $\varphi_i(\Lambda)$ to itself \cite[Theorem 3.4]{Nakamura:C0}. He also assumes some technical conditions that we have since lifted \cite[Corollary 1.5]{DRS3}. Rosen and Zhang prove the first part of Theorem \ref{thm:c0} in arbitrary dimensions assuming a uniform convergence of the conformal factors $f_i$  (defined by $\varphi_i^*\alpha =f_i \alpha$ for contact form $\alpha$) \cite[Theorem 1.4]{RosenZhang}. Usher generalizes Rosen and Zhang's result assuming certain lower bounds on the $f_i$ \cite[Theorem 1.2]{Usher:C0}. Observe that the latter works do not make any claims about the contactomorphism type of the limit.

Since any tangent vector in the contact plane can be realized as the tangent to a small Legendrian knot, our result Theorem \ref{thm:c0} is strong enough to settle ``Gromov's Alternative'' in this dimension: a smooth $C^0$-limit of contactomorphisms is itself a contactomorphism. This result was first proven by Eliashberg \cite{Eliashberg:GromovsAlternative}; see work by M\"{u}ller--Spaeth for a more recent proof \cite{MullerSpaeth:GromovsAlternative}. Note that, in the case when the $C^0$-limit homeomorphism $\varphi_\infty$ moreover is smooth, Gromov's alternative can itself be used to derive the conclusion Theorem \ref{thm:c0}.

\section{Transitivity of squeezing (Proof of Lemma \ref{lem:transitivity})}
\label{sec:TransitivityProof}

We prove Lemma \ref{lem:transitivity}.

Part (i): Consider the contact Hamiltonian $H_t \colon M \to \R$ that generates the contact isotopy $\psi_t$. We cut off $H_t$ via a sequence of bump functions $\rho_t\cdot H_t$ that have support contained inside $B_{\epsilon(t)}(K)$ 
for all $t \ge 0$, while $\rho_t \equiv 1$ holds near $\psi_t(K_0)$. The new contact isotopy $\varphi_t$ obtained  restricts to the old isotopy along $K_0$, and hence squeezes $K_0$ onto $K$ as well.

  The corresponding sequence of contactomorphisms $\varphi_i$ for the integer times $i=0,1,2,3,\ldots$ is the sought sequence that squeezes $K_1$ onto $K$. For Part (2) of Definition \ref{defn:squeezing}, we may take 
$$i_{r,\epsilon} \coloneqq \min \{ i_0; \:\: \epsilon(i) < r \: \text{for all}\: i \ge i_0 \}$$
to be independent of $\epsilon.$ 
In this case, the maps $\varphi_i \circ \varphi_{j}^{-1}$ with $i \ge j \ge i_{r,\epsilon}$ all have support contained inside $B_r(K)$, i.e.~$\varphi_i \circ \varphi_{j}^{-1}(x)=x$ for $x \notin B_r(K)$.

Part (ii): By the assumption that $\varphi^{(\nu)}_{i}$ are sequences that squeeze $K_\nu$ onto $K_{\nu-1}$ we get that, for any $r, \epsilon>0$, there are $i^{(\nu)}_{r,\epsilon}$ such that
$$d(\varphi^{(\nu)}_i\circ (\varphi^{\nu}_j)^{-1}(x),x) < \epsilon$$
holds for all $x \notin B_r(K_{\nu-1})$ 
and $i \ge j \ge i^{(\nu)}_{r,\epsilon}$. 
In particular,
$$(\varphi_{j}^{(1)})^{-1}(B_{r-\delta}(K_0)) \subset (\varphi_{i}^{(1)})^{-1}(B_{r}(K_0)) \subset (\varphi_{j}^{(1)})^{-1}(B_{r+\delta}(K_0))$$
 may be assumed to hold for all sufficiently small $\delta>0$ and $i \ge j \ge i^{(1)}_{r/2,\epsilon/2}$. By the definition of squeezing, we can assume that $K_1 \subset (\varphi_{j}^{(1)})^{-1}(B_{r-\delta}(K_0))$ is satisfied after increasing $i_{r/2,\epsilon/2} \gg 0$ further and taking $j \ge i_{r/2,\epsilon/2}$. In other words, all images $\varphi_{i}^{-1}(B_{r}(K_0))$ can be assumed to contain a fixed neighborhood $(\varphi_{i_{r/2,\epsilon/2}}^{(1)})^{-1}(B_{r-\delta}(K_0)) \supset K_1$ whenever $i \ge i_{r/2,\epsilon/2}$.

 We claim that the sequence $\varphi^{(1)}_i \circ \varphi^{(2)}_{\alpha(i)}$ squeezes $K_2$ onto $K_0$ for a suitable increasing re-indexing $\alpha(i) \ge i$ where $\alpha(i)-i \gg 0$ is taken to be sufficiently large.

First we verify that Part (1) of the definition is satisfied. Note that we have an inclusion,
$$\varphi^{(2)}_{\alpha(i)}(K_2) \subset B_{\epsilon^{(2)}_{\alpha(i)}}(K_1)$$ 
where the sequence $\epsilon^{(2)}_{\alpha(i)}$ satisfies $\lim_{i \to +\infty} \epsilon^{(2)}_{\alpha(i)}=0$. Consequently, $B_{\epsilon^{(2)}_{\alpha(i)}}(K_1) \subset (\varphi^{(1)}_{j})^{-1}(B_{r}(K_0))$ may be assumed to hold for any arbitrary $r >0$ and all $i \gg 0$, whenever $j \gg i^{(1)}_{r,\epsilon}$. In conclusion,
 for any $r>0,$
$$ \varphi^{(1)}_i \circ \varphi^{(2)}_{\alpha(i)}(K_2) \subset B_{r}(K_0)$$
is satisfied whenever we take $\alpha$ to satisfy $\alpha(i)-i \gg 0$. The image of $K_2$ is moreover smoothly isotopic to $K_0$ inside the same subset.
 
What remains is to verify Part (2) of the definition. Take
$$i_{r,\epsilon} \coloneqq \max(i^{(1)}_{r,\epsilon/4},i^{(2)}_{\rho(\epsilon),\epsilon/4}),$$
for $\rho(\epsilon)>0$ sufficiently small so that the inclusion
  $$B_{\rho(\epsilon)}(K_0) \subset (\varphi_{i}^{(1)})^{-1}(B_r(K_0))$$
  is satisfied for all $i \ge i^{(1)}_{r,\epsilon/4}$. It is then readily checked that Part (2) is satisfied for the sequence $\{\varphi^{(1)}_i \circ \varphi^{(2)}_{\alpha(i)}\}$ of contactomorphisms.

Part (iii): This is obvious since the property of convergence is independent of the metric, as it only depends on the topology.  
\qed
  
\section{Some ${\tt tb}$ prerequisites (Proof of Theorem \ref{thm:TransverseNonSqueezing})}

The material in this section concerns a type of non-squeezing behavior for Legendrians that can roughly be described as follows: a Legendrian that approximates a transverse knot sufficiently well (in a certain technical sense) can be destabilized. This matches well with the intuition that one needs to add zig-zags in order to approximate non-Legendrian knots by Legendrians; see Figure \ref{fig:stabilized}. As said in the introduction, this result is implicitly contained in the proofs of the classification of contact structures on solid tori from \cite{MR1779622}, \cite{Honda:Classification}. However, we choose a different path here, and instead prove the result by directly relying only on the Thurston--Bennequin inequality for Legendrian knots in tight three-manifolds. 
Recall that the Thurston--Bennequin inequality \cite{Bennequin:Entrelacements} for Legendrian unknots $\Lambda \subset (S^3,\xi_{st})$ in the standard contact sphere states that
$$ {\tt tb}(\Lambda) \le -1.$$
This is a strong result that e.g.~implies the tightness of the standard sphere. We start by recalling certain topological notions in contact manifolds, such as the Thurston--Bennequin number.

\subsection{Twisting and Thurston--Bennequin}
\label{sec:twisting}

Define the {\bf linking number} of two disjoint oriented null-homologous knots $K_0 \sqcup K_1 \subset M^3$ by the algebraic intersection number
$$ {\tt lk}(K_0,K_1) \coloneqq K_0 \bullet \Sigma $$
where $\Sigma$ is a choice of two-chain with boundary $\partial \Sigma=K_1.$ When the ambient manifold satisfies $H_2(M)=0$ this linking number does not depend on the choice of null-homology.

A {\bf framing} of a knot $K \subset M^3$ inside an orientable three-dimensional manifold can be defined either as a non-vanishing normal vector field, or as a small piece of an embedded orientable surface $\Sigma$ whose boundary contains the knot. Recall that two different framings of an oriented knot have a well-defined winding number in $\Z$, which vanishes if and only if the two framings are homotopic. This winding number can be interpreted as the ``difference of framings''
 via the formula
$$ d(\OP{Fr}_{\Sigma_0},\OP{Fr}_{\Sigma_1}) \coloneqq K_{\Sigma_0} \bullet \Sigma_1 \in \Z,$$
where $K_{\Sigma_0}$ is a sufficiently small push-off of $K$ along a non-vanishing normal vector field that is tangent to the surface $\Sigma_0$. Here $\Sigma_i$ are given orientations that agree on the boundary component $K$; it thus follows that the above number only depends on the orientation of the ambient three-manifold. In the case when $\Sigma_1$ is embedded and $K=\partial \Sigma_1$ is its entire boundary, we get the identity $d(\OP{Fr}_{\Sigma_0},\OP{Fr}_{\Sigma_1})={\tt lk}(K_{\Sigma_0},K).$ 

Recall that a contact structure on a three-dimensional manifold induces a canonical orientation via the locally defined volume form $\alpha \wedge d\alpha$. A Legendrian knot $\Lambda$ has the canonical framing $\OP{Fr}_{\OP{Reeb}}$ given by push-off in the Reeb direction. In the case when $\Lambda \subset \R^3$ we moreover have the canonical Seifert framing induced by a bounding surface $\Sigma_\Lambda$. We define the {\bf Thurston--Bennequin} number via
$$ {\tt tb}(\Lambda) \coloneqq d(\OP{Fr}_{\OP{Reeb}},\OP{Fr}_{\OP{Seifert}})={\tt lk}(\Lambda_{\OP{Reeb}},\Lambda),$$
where $\Lambda_{\OP{Reeb}}$ denotes a small push-off in the Reeb direction. In arbitrary contact manifolds one can define the Thurston--Bennequin number by a similar formula when the knot is null-homologous; in general, this number depends on a choice of null-homology. In addition, given a fixed knot $K \subset M,$ we can define a {\bf relative Thurston--Bennequin number} for any Legendrian knot $\Lambda \subset M \setminus K$ that satisfies $\{\pm[\Lambda]\}=\{\pm[K]\} \subset H_1(M)$. Again, this number depends on the choice of a chain $\Sigma$ with $\partial \Sigma = \Lambda \cup K$ in general; we denote it by
$$ {\tt tb}_{K,\Sigma}(\Lambda) \coloneqq \Lambda_{\OP{Reeb}} \bullet \Sigma.$$
This number is invariant under contactomorphisms $\phi$ in the sense that
$$ {\tt tb}_{K,\Sigma}(\Lambda)={\tt tb}_{\phi(K),\phi(\Sigma)}(\phi(\Lambda)).$$
When $H_2(M)=0$ it immediately follows that ${\tt tb}_{K,\Sigma}(\Lambda)$ is independent of the choice of chain $\Sigma,$ in which case we will simply write ${\tt tb}_K(\Lambda)$.

When $\Lambda$ is either contained in a surface $\Sigma$, or equal to one of its boundary components, one can define the following quantity related to the Thurston--Bennequin number. The {\bf twisting number} is given by
$${\tt tw}(\Lambda,\OP{Fr}_\Sigma) \coloneqq d(\OP{Fr}_{\OP{Reeb}},\OP{Fr}_{\Sigma}) \in \Z.$$
where $Fr_\Sigma$ is the framing induced by the surface. When $\partial \Sigma=\Lambda$ we immediately get
$${\tt tw}(\Lambda,\OP{Fr}_\Sigma)={\tt tb}_\Sigma(\Lambda) :={\tt tb}_{\emptyset,\Sigma}(\Lambda)$$
where the right-hand side is the non-relative Thurston--Bennequin number.

The following results are standard.
\begin{lem}
  \label{lem:twisting1}
  \begin{enumerate}
    \item Suppose $K \subset (M^3,\xi)$ is a smooth knot, $\Lambda \subset M \setminus K$ is a Legendrian knot,  $\Sigma'$ is a (possibly) singular chain with $\partial \Sigma'=\Lambda \sqcup K$ an oriented link,  and $\Sigma''$ is a singular chain with $K=-\partial \Sigma''.$ Then
$${\tt tb}_{\Sigma' \cup \Sigma''}(\Lambda)={\tt tb}_{\Sigma',K}(\Lambda)+ \Lambda \bullet \Sigma''={\tt tb}_{\Sigma',K}(\Lambda)-{\tt lk}(\Lambda,K).$$

\item Let $\Sigma \subset (M^3,\xi)$ be an oriented embedded surface with boundary
$$ \partial \Sigma=\Lambda_0 \sqcup -\Lambda_1$$
an oriented Legendrian link. Then
$$ {\tt tb}_{\Sigma',K}(\Lambda_1)-{\tt tb}_{\Sigma \cup \Sigma',K}(\Lambda_0)={\tt tw}(\Lambda_1,\OP{Fr}_{\Sigma})-{\tt tw}(\Lambda_0,\OP{Fr}_\Sigma)$$
where $K \subset M \setminus \Sigma$ is either a knot or the empty set $K=\emptyset$, and $\Sigma'$ is a singular chain that satisfies $\partial \Sigma'=\Lambda_1 \cup K$.
\end{enumerate}
\end{lem}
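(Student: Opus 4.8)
The plan is to reduce both identities to routine bookkeeping with the intersection numbers that define ${\tt tb}$, ${\tt lk}$ and ${\tt tw}$, using three elementary facts: additivity of the intersection pairing over a union of chains; that a sufficiently small Reeb push-off $\Lambda_{\OP{Reeb}}$ is smoothly homotopic to $\Lambda$ inside the complement $M \setminus X$ of any fixed compact $X$ disjoint from $\Lambda$, so that $\Lambda_{\OP{Reeb}} \bullet C = \Lambda \bullet C$ for any chain $C$ with $\partial C \subset X$; and the interpretation of ${\tt tw}(\Lambda,\OP{Fr}_\Sigma)$ as the localized intersection number $\Lambda_{\OP{Reeb}} \bullet \Sigma$, which is legitimate because the $\Sigma$-framing push-off of $\Lambda$ runs parallel to $\Sigma$ and hence contributes no local intersections.

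For Part (1), I would start from ${\tt tb}_{\Sigma' \cup \Sigma''}(\Lambda) = \Lambda_{\OP{Reeb}} \bullet (\Sigma' \cup \Sigma'')$ and split it, by additivity, as $\Lambda_{\OP{Reeb}} \bullet \Sigma' + \Lambda_{\OP{Reeb}} \bullet \Sigma''$; the first summand is ${\tt tb}_{\Sigma',K}(\Lambda)$ by definition. For the second summand I would use that $\Lambda$ is disjoint from $K = -\partial\Sigma''$, so a small Reeb push-off gives $\Lambda_{\OP{Reeb}} \bullet \Sigma'' = \Lambda \bullet \Sigma''$; and since $-\Sigma''$ is a chain bounding $K$, the definition of the linking number yields $\Lambda \bullet \Sigma'' = -\Lambda \bullet (-\Sigma'') = -{\tt lk}(\Lambda,K)$. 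This simultaneously produces both claimed equalities and is entirely mechanical.

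For Part (2), I would again expand via the definitions and additivity, so that the left-hand side becomes $(\Lambda_1)_{\OP{Reeb}} \bullet \Sigma' - (\Lambda_0)_{\OP{Reeb}} \bullet \Sigma' - (\Lambda_0)_{\OP{Reeb}} \bullet \Sigma$. The term $(\Lambda_0)_{\OP{Reeb}} \bullet \Sigma$ is localized near $\Lambda_0$ (the push-off being small and $\Lambda_1$ far away), and equals ${\tt tw}(\Lambda_0,\OP{Fr}_\Sigma)$. To handle the first two terms I would push $\Sigma$ off in the Reeb direction to a parallel surface $\Sigma_{\OP{Reeb}}$ with $\partial\Sigma_{\OP{Reeb}} = (\Lambda_0)_{\OP{Reeb}} - (\Lambda_1)_{\OP{Reeb}}$, so that the difference of those two terms equals $-\partial\Sigma_{\OP{Reeb}} \bullet \Sigma'$. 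Applying the boundary (Leibniz) formula for the intersection of two $2$-chains in an oriented $3$-manifold --- namely that the total boundary of the $1$-chain $\Sigma_{\OP{Reeb}} \cap \Sigma'$ vanishes, whence $\partial\Sigma_{\OP{Reeb}} \bullet \Sigma' = -\Sigma_{\OP{Reeb}} \bullet \partial\Sigma'$ with the sign dictated by the ambient orientation --- and discarding the $K$-contribution since $\Sigma_{\OP{Reeb}}$ is disjoint from $K$, I would rewrite the first two terms as $\Sigma_{\OP{Reeb}} \bullet \Lambda_1$. This last intersection is again localized near $\Lambda_1$ (which meets $\Sigma$ only along its boundary), and a local computation identifies it with ${\tt tw}(\Lambda_1,\OP{Fr}_\Sigma)$, yielding the stated formula.

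The main obstacle is the sign bookkeeping in Part (2): the boundary formula carries an orientation-convention-dependent sign, and one must match it against two further subtleties --- that $\Lambda_1$ enters $\partial\Sigma$ with reversed orientation (as $-\Lambda_1$) while ${\tt tw}(\Lambda_1,\cdot)$ is taken with the knot's own orientation, and that passing from $\Sigma_{\OP{Reeb}} \bullet \Lambda_1$ to $(\Lambda_1)_{\OP{Reeb}} \bullet \Sigma$ trades a surface push-off for a curve push-off. I would settle all three at once by working in a solid-torus neighborhood of each $\Lambda_i$, where the Reeb framing winds a definite integer number of times relative to the surface framing, and checking directly that the signs assemble into the asserted equality; this is the only step where genuine geometric care, rather than formal manipulation, is required.
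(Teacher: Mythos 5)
Your proposal is correct and follows essentially the same route as the paper: Part (1) is the same direct additivity computation the paper calls straightforward, and in Part (2) both you and the paper establish $(\Lambda_0)_{\OP{Reeb}} \bullet \Sigma = {\tt tw}(\Lambda_0,\OP{Fr}_\Sigma)$, push $\Sigma$ off in the Reeb direction, and derive the identity relating $(\Lambda_0)_{\OP{Reeb}}\bullet\Sigma'$, $(\Lambda_1)_{\OP{Reeb}}\bullet\Sigma'$ and $\Lambda_1\bullet\Sigma_{\OP{Reeb}}$ from the vanishing signed boundary count of the $1$-chain $\Sigma_{\OP{Reeb}}\cap\Sigma'$ --- your ``Leibniz/boundary formula'' is precisely the paper's explicit analysis of the boundary points of that intersection locus. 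The sign and push-off subtleties you flag (in particular trading $\Lambda_1\bullet\Sigma_{\OP{Reeb}}$ for $(\Lambda_1)_{\OP{Reeb}}\bullet\Sigma={\tt tw}(\Lambda_1,\OP{Fr}_\Sigma)$) are the same ones the paper resolves by using that $\Sigma$ is embedded.
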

\begin{proof}
  Part (1): This is a straightforward computation of algebraic intersection numbers.

  Part (2): First we use the fact that $\Sigma$ is embedded in order to compute
  \begin{equation}
    \label{eq1}
    {\tt tb}_{\Sigma \cup \Sigma',K}(\Lambda_0)={\tt tw}(\Lambda_0,\OP{Fr}_\Sigma)+(\Lambda_0)_{\OP{Reeb}} \bullet \Sigma'
  \end{equation}
  where the second term counts intersections of $(\Lambda_0)_{\OP{Reeb}}$ and $\Sigma'$.

Note that the push-off $\Sigma_{\OP{Reeb}}$ in the Reeb-direction is an embedded homology between $(\Lambda_0)_{\OP{Reeb}}$ and $(\Lambda_1)_{\OP{Reeb}}$. We will analyze the intersection locus
  $\Sigma_{\OP{Reeb}} \cap \Sigma'.$
  For simplicity we consider the case when the chain $\Sigma'$ is an immersed surface. For $\Sigma_{\OP{Reeb}}$ a sufficiently small push-off, followed by a small generic perturbation, the intersections consist of a union of oriented paths in $\Sigma_{\OP{Reeb}}$ whose boundary points transversely intersect the boundary
  $$\partial \Sigma_{\OP{Reeb}}=(\Lambda_0)_{\OP{Reeb}}-(\Lambda_1)_{\OP{Reeb}},$$
  except for a number of boundary components that are in bijection with the finite number of transverse intersection points
  $$\partial \Sigma' \cap \Sigma_{\OP{Reeb}} = \Lambda_1 \cap \Sigma_{\OP{Reeb}} \subset \Sigma_{\OP{Reeb}} \setminus \partial \Sigma_{\OP{Reeb}}$$
in the interior of $\Sigma_{\OP{Reeb}}$.
A signed count of these different boundary points gives rise to the identity
  $$ (\Lambda_0)_{\OP{Reeb}} \bullet \Sigma' = (\Lambda_1)_{\OP{Reeb}} \bullet \Sigma' -  \Lambda_1 \bullet \Sigma_{\OP{Reeb}}$$
  of algebraic intersection numbers.

  In the latter equation, the first term on the right-hand side is equal to ${\tt tb}_{\Sigma',K}(\Lambda_1)$, while the second term is equal to
  $$-(\Lambda_1)_{\OP{Reeb}} \bullet \Sigma=-{\tt tw}(\Lambda_1,\OP{Fr}_\Sigma),$$ 
  where we again have used the fact that $\Sigma$ is embedded. To conclude:
  $$(\Lambda_0)_{\OP{Reeb}} \bullet \Sigma'={\tt tb}_{\Sigma',K}(\Lambda_1)-{\tt tw}(\Lambda_1,\OP{Fr}_\Sigma)$$
  which gives the sought equality between Thurston--Bennequin and twisting numbers when combined with Equation \eqref{eq1}.
\end{proof}

From Part (2) of the previous lemma we immediately deduce the following.
\begin{cor}
  \label{cor:tbdifference}
  Let $\Lambda_0,\Lambda_1 \subset (M,\xi)$ be two Legendrian knots inside a contact manifold $M$ that satisfies $H_2(M)=0$, where $\Lambda_1 \sqcup \Lambda_2=\partial \Sigma$ is the boundary of an embedded orientable surface $\Sigma \subset M$. For any knot 
  $$K \subset M \setminus \Sigma$$
  in the same homology class (we allow $K=\emptyset$), the difference
  $$ {\tt tb}_K(\Lambda_0) - {\tt tb}_K(\Lambda_1)$$
  of relative Thurston--Bennequin numbers is independent of the choice of such $K$
\end{cor}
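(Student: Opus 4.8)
The plan is to read the corollary off as a direct consequence of Part (2) of Lemma \ref{lem:twisting1}, whose job is precisely to isolate, inside a single algebraic identity, all of the dependence on the auxiliary knot $K$. First I would fix the embedded orientable surface $\Sigma$ with $\partial\Sigma=\Lambda_0\sqcup-\Lambda_1$ (the hypothesis of Part (2)), together with an arbitrary knot $K\subset M\setminus\Sigma$ lying in the common homology class $[\Lambda_0]=[\Lambda_1]$, so that the relative numbers ${\tt tb}_K(\Lambda_i)$ are defined. Choosing a singular chain $\Sigma'$ with $\partial\Sigma'=\Lambda_1\cup K$, Part (2) of the lemma (applied with $K$ the given knot) immediately supplies
$$ {\tt tb}_{\Sigma',K}(\Lambda_1)-{\tt tb}_{\Sigma\cup\Sigma',K}(\Lambda_0)={\tt tw}(\Lambda_1,\OP{Fr}_{\Sigma})-{\tt tw}(\Lambda_0,\OP{Fr}_\Sigma). $$

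Next I would invoke the standing hypothesis $H_2(M)=0$. As recalled just before Lemma \ref{lem:twisting1}, this forces the relative Thurston--Bennequin number to be independent of the choice of bounding chain, so that ${\tt tb}_{\Sigma',K}(\Lambda_1)={\tt tb}_K(\Lambda_1)$ and ${\tt tb}_{\Sigma\cup\Sigma',K}(\Lambda_0)={\tt tb}_K(\Lambda_0)$. Substituting, the identity collapses to
$$ {\tt tb}_K(\Lambda_1)-{\tt tb}_K(\Lambda_0)={\tt tw}(\Lambda_1,\OP{Fr}_{\Sigma})-{\tt tw}(\Lambda_0,\OP{Fr}_\Sigma). $$
The decisive observation is then that the right-hand side involves only the fixed surface $\Sigma$ and the two Legendrians, and makes no reference whatsoever to $K$. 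Consequently the left-hand side is independent of $K$, and hence so is ${\tt tb}_K(\Lambda_0)-{\tt tb}_K(\Lambda_1)$, which is exactly the assertion.

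I do not expect a genuine obstacle, since the real content has already been absorbed into Part (2); what remains is purely bookkeeping. The two points deserving a line of care are, first, that $\Sigma$ can be held fixed while $K$ varies — guaranteed by the requirement $K\subset M\setminus\Sigma$, which keeps $K$ disjoint from $\Sigma$ so that ${\tt tw}(\Lambda_i,\OP{Fr}_\Sigma)$ stays well-defined and manifestly unchanged — and second, the orientation check that $\Sigma\cup\Sigma'$ is a legitimate chain for computing ${\tt tb}_{\Sigma\cup\Sigma',K}(\Lambda_0)$, i.e.\ that $\partial(\Sigma\cup\Sigma')=(\Lambda_0-\Lambda_1)+(\Lambda_1-K)=\Lambda_0-K$ matches the required boundary $\Lambda_0\cup K$. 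Both are routine, so the corollary follows at once.
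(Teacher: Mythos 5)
Your proposal is correct and is exactly the paper's intended argument: the paper deduces the corollary ``immediately'' from Part (2) of Lemma \ref{lem:twisting1}, using $H_2(M)=0$ to drop the chain dependence so that the identity reads ${\tt tb}_K(\Lambda_1)-{\tt tb}_K(\Lambda_0)={\tt tw}(\Lambda_1,\OP{Fr}_\Sigma)-{\tt tw}(\Lambda_0,\OP{Fr}_\Sigma)$, whose right-hand side is manifestly $K$-independent. Your write-up simply spells out the bookkeeping the paper leaves implicit, including the correct reading of the statement's typo ($\partial\Sigma=\Lambda_0\sqcup-\Lambda_1$, not $\Lambda_1\sqcup\Lambda_2$).
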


The crucial technical result that we rely on is the following relative version of the Thurston--Bennequin inequality:

\begin{lem}[Bennequin \cite{Bennequin:Entrelacements}]
  \label{lem:tbineq}
  Consider a Legendrian knot $\Lambda \subset J^1S^1$ which is smoothly isotopic to the zero section $j^10$, and fix a reference Legendrian $K=j^1c$ for $c \gg 0$. It follows that
  the relative Thurston--Bennequin invariants satisfy
  $$ {\tt tb}_K(\Lambda) \le {\tt tb}_K(j^10)=0,$$
  i.e.~the zero-section has maximal relative Thurston--Bennequin invariant.
\end{lem}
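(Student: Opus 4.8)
The plan is to reduce this relative statement to Bennequin's absolute inequality ${\tt tb}(\Lambda)\le-1$ for Legendrian unknots in $(S^3,\xi_{st})$, by realizing $(J^1S^1,j^10)$ as the standard contact neighborhood of a maximal Legendrian unknot. By the Legendrian neighborhood theorem there is a contactomorphism identifying $J^1S^1$ with a tubular neighborhood $\nu(U)$ of the maximal-${\tt tb}$ Legendrian unknot $U\subset(S^3,\xi_{st})$, carrying the zero section $j^10$ to $U$. The scaling maps $(q,p,z)\mapsto(q,\lambda p,\lambda z)$ are contactomorphisms of $J^1S^1$ that fix the zero section and contract every compact set towards it, so applying one with $\lambda\ll1$ I may assume that both $\Lambda$ and the reference $K=j^1c$ lie in the region identified with $\nu(U)$. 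Since $\mathbf{k}$-- since $H_2(J^1S^1)=0$ and $[\Lambda]=\pm[K]$, the relative number ${\tt tb}_K(\Lambda)$ is well defined and invariant under contactomorphisms, so it suffices to prove the inequality for the images in $S^3$, which I keep denoting $\Lambda$ and $K$.

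Inside $(S^3,\xi_{st})$ the reference $K$ is an unknot, so it bounds a disk $\Sigma''$ with $K=-\partial\Sigma''$. Part (1) of Lemma \ref{lem:twisting1} then converts the relative invariant into an absolute one, expressing ${\tt tb}_{S^3}(\Lambda)$ as ${\tt tb}_K(\Lambda)$ minus the linking correction ${\tt lk}(\Lambda,K)$, and likewise for $U$. Because $\Lambda$ is smoothly isotopic to $j^10$, hence to $U$, it is itself a Legendrian unknot, so Bennequin's inequality \cite{Bennequin:Entrelacements} gives ${\tt tb}_{S^3}(\Lambda)\le-1={\tt tb}_{S^3}(U)$. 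Subtracting the two identities yields
\[
{\tt tb}_K(\Lambda)-{\tt tb}_K(j^10)=\bigl({\tt tb}_{S^3}(\Lambda)-{\tt tb}_{S^3}(U)\bigr)+\bigl({\tt lk}(\Lambda,K)-{\tt lk}(U,K)\bigr),
\]
in which the first bracket is $\le0$ by the maximality of $U$ and the normalization ${\tt tb}_K(j^10)=0$ fixes the correction for the core. Everything thus reduces to controlling the second bracket, i.e.\ the amount by which $\Lambda$ winds about $K$ relative to the core.

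The main obstacle is precisely this winding term: a Legendrian that is only \emph{smoothly} isotopic to the zero section may be pushed across $K$ during the isotopy, so ${\tt lk}(\Lambda,K)$ need not equal ${\tt lk}(U,K)$, and the bare unknot bound is by itself insufficient. What is really needed is the sharper relative inequality, namely that any extra winding of $\Lambda$ about $K$ is paid for by a matching drop in ${\tt tb}_{S^3}(\Lambda)$. I expect to establish this by a front analysis in the $(q,z)$-cylinder: winding in the direction realized by graphical fronts comes for free but simultaneously raises ${\tt lk}(\Lambda,K)$, whereas winding in the opposite direction cannot be graphical and forces cusps, each of which lowers ${\tt tb}_{S^3}(\Lambda)$ by the same amount it changes the linking, so that the displayed right-hand side stays $\le0$ with the unknot inequality ${\tt tb}_{S^3}(U)=-1$ serving only as the normalizing base case. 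Pinning down this cusp-versus-linking count, together with the orientation and sign conventions entering the identities of Lemma \ref{lem:twisting1}, is the step I expect to require the most care; the zero section is the equality case, giving ${\tt tb}_K(\Lambda)\le{\tt tb}_K(j^10)=0$.
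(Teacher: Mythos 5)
Your reduction to $S^3$ follows the same skeleton as the paper's proof: embed $J^1S^1$ into the standard contact sphere (or $\R^3$) so that constant jets go to standard Legendrian unknots, use Part (1) of Lemma \ref{lem:twisting1} to write the relative invariant as an absolute ${\tt tb}$ plus a linking correction, and invoke Bennequin's inequality ${\tt tb}(F(\Lambda))\le -1$. The problem is the step you defer. You treat the linking term ${\tt lk}(F(\Lambda),F(K))-{\tt lk}(F(j^10),F(K))$ as a genuine remaining difficulty, to be handled by an unexecuted ``front analysis'' trading cusps against winding. That is precisely where your argument stops being a proof: the cusp-versus-linking bookkeeping is never formulated, and what it would amount to is a relative Bennequin-type inequality in the complement of $K$ valid for \emph{arbitrary} $c$ --- a statement of essentially the same depth as the lemma itself. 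So the proposal has a hole at its only nontrivial point.

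What you missed is that the hypothesis $c\gg 0$ kills the winding term outright, which is exactly how the paper concludes. A smooth isotopy from $\Lambda$ to $j^10$ inside $J^1S^1$ has compact image, so for $c$ sufficiently large (this is what $c\gg 0$ means here, relative to $\Lambda$ and a chosen isotopy) the entire isotopy is disjoint from $j^1c$; hence $\Lambda$ and $j^10$ are smoothly isotopic in $J^1S^1\setminus K$, and therefore
$${\tt lk}(F(\Lambda),F(K))={\tt lk}(F(j^10),F(K))=-1.$$
Your ``second bracket'' vanishes identically, and the lemma follows from Bennequin's unknot inequality alone. The sharper statement you were aiming for --- that any extra winding of $\Lambda$ about $K$ is always paid for by a matching drop in ${\tt tb}$, with no largeness assumption on $c$ --- may be true, but it is not needed, and it is certainly not established by your sketch.
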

  \begin{proof}
Construct a contact embedding
  $$F \colon (J^1S^1,\xi_{st}) \hookrightarrow (\R^3,\xi_{st})$$
  that takes the one-jet $j^1C$ of a constant function to a standard Legendrian unknot, i.e.~a knot which is Legendrian isotopic to $\Lambda_{st} \subset (\R^3,\xi_{st})$ with ${\tt tb}(\Lambda_{st})=-1$. Using this we immediately compute
  $${\tt lk}(F(j^1C),F(j^10))={\tt tb}(\Lambda_{st})=-1$$
  for any $C >0.$ For $c \gg 0$ we thus get ${\tt lk}(F(j^1c),F(\Lambda))=-1$ as well, since $\Lambda$ and $j^10$ can be assumed to be smoothly isotopic inside $J^1S^1 \setminus j^1c$.

  The image $F(\Lambda)$ is also a Legendrian unknot. Consider an embedded annulus $\Sigma' \subset F(J^1S^1)$ with boundary $\partial \Sigma'=F(K) \cup F(\Lambda)$, and let $\Sigma'' \subset \R^3$ be a null-homology of $F(K)=F(j^1c).$

Alluding to Part (1) of Lemma \ref{lem:twisting1} with $F(K)=F(j^1c)$, $c \gg 0$, and $\Sigma'$ and $\Sigma''$ as above, we conclude
$$ {\tt tb}_K(\Lambda)={\tt tb}(F(\Lambda))-{\tt lk}(F(j^1c),F(\Lambda))={\tt tb}(F(\Lambda))+1.$$
In particular, we get
$${\tt tb}_K(j^10)={\tt tb}(\Lambda_{st})+1=0.$$

Finally, the Thurston--Bennequin inequality \cite{Bennequin:Entrelacements} gives
$$ {\tt tb}(F(\Lambda)) \le {\tt tb}(\Lambda_{st})=-1$$
from which the sought inequality follows.
\end{proof}

\subsection{Standard Legendrians near a transverse knot}
\label{sec:Lambdak}

In this subsection we analyze the standard contact solid tori
  $$\partial D^2_{\sqrt{2/k}} \times S^1 \subset \left(\RR^2_{(x,y)} \times S^1_\theta, \ker\left(d\theta-(1/2)\left(y\,dx-x\,dy\right)\right)\right)$$
which for integers $k$ are foliated by the Legendrian knots
$$\Lambda_k \coloneqq \left\{\left(\sqrt{2/k}e^{ik\theta},\theta+\theta_0\right); \:\theta \in S^1\right\} \subset \R^2 \times S^1$$
that are smoothly isotopic to the core $T=\{0\} \times S^1$ of the solid torus, which is a transverse knot.

\begin{lem}
  \label{lem:prequant}
  There is a contact-form-preserving contact embedding of
  $$(B^2_{\sqrt{2}} \times S^1_\theta,d\theta-(1/2)(xdy-ydx))$$
  into $(S^3,x\,dy-y\,dx)$, with image being the complement of a standard transverse unknot. This embedding, moreover, takes $\Lambda_{2}$ to the standard Legendrian unknot with ${\tt tb}=-1$.

  It follows that, for any fixed $K \subset \R^2 \times S^1$ that satisfies $[K]=[\{0\} \times S^1] \in H_1(\R^2 \times S^1)$, the relative Thurston--Bennequin invariant
  $${\tt tb}_K(\Lambda), \:\:\text{for}\:\: \Lambda \subset (\R^2 \times S^1)\setminus K, \:\: [\Lambda]=[K] \in H_1(\R^2 \times S^1),$$
  satisfies the bound
  $${\tt tb}_K(\Lambda) \le -1+{\tt lk}(\Lambda,K)$$
  whenever $\Lambda$ is smoothly isotopic to $\{0\} \times S^1$.
  \end{lem}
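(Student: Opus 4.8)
The plan is to realise $S^3$ minus a standard transverse unknot as the explicit image of a contact-form-preserving embedding of $B^2_{\sqrt2}\times S^1$, and then to transport the relative Thurston--Bennequin computation into $S^3$, where Bennequin's inequality applies. For Part 1 I would avoid the abstract Boothby--Wang picture and instead write the embedding down. Put $S^3=\{|z_1|^2+|z_2|^2=1\}\subset\C^2$ with contact form $\alpha=\Im(\bar z_1\,dz_1+\bar z_2\,dz_2)$ (this is ``$x\,dy-y\,dx$''), set $\beta:=d\theta-\tfrac12(x\,dy-y\,dx)$, and define
\[ F(w,\theta)=\Bigl(\tfrac{1}{\sqrt2}\,\bar w\,e^{i\theta},\ \sqrt{1-\tfrac12|w|^2}\,\,e^{i\theta}\Bigr),\qquad w\in B^2_{\sqrt2},\ \theta\in S^1. \]
Then $|F|^2\equiv1$, $F$ is a diffeomorphism onto $S^3\setminus\{z_2=0\}$ (recover $\theta=\arg z_2$, then $w$), and a one-line computation using $\Im(\bar z_1\,dz_1)=-\tfrac12(x\,dy-y\,dx)+\tfrac12|w|^2\,d\theta$ together with $\Im(\bar z_2\,dz_2)=(1-\tfrac12|w|^2)\,d\theta$ gives $F^*\alpha=\beta$. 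The removed circle $\{z_2=0\}$ is a Hopf fibre, i.e.\ a standard transverse unknot, so the image is its complement.

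Feeding $\Lambda_2=\{(e^{2it},t+\theta_0)\}$ into $F$ produces the curve $t\mapsto(\tfrac1{\sqrt2}e^{i(\theta_0-t)},\tfrac1{\sqrt2}e^{i(\theta_0+t)})$ on the Clifford torus; this is the $(-1,1)$-curve, hence an unknot, it is Legendrian (the tangency test yields $\alpha(\dot\gamma)=\Im(-\tfrac i2+\tfrac i2)=0$), and a Reeb push-off linking count gives ${\tt tb}=-1$, as claimed. For Part 2, suppose first that $\Lambda$, the reference $K$, and an annulus $\Sigma'$ with $\partial\Sigma'=\Lambda\sqcup K$ all lie in $B^2_{\sqrt2}\times S^1$. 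As $F$ is a contactomorphism onto its image, the relative invariant is preserved, ${\tt tb}_K(\Lambda)={\tt tb}_{F(K)}(F(\Lambda))$, and since $\Lambda$ is smoothly isotopic to the core, $F(\Lambda)$ is smoothly isotopic to the Hopf fibre $F(\{0\}\times S^1)$ and is therefore a Legendrian unknot. Now $H_2(S^3)=0$, so Part (1) of Lemma \ref{lem:twisting1} turns the relative number into an absolute one,
\[ {\tt tb}_{F(K)}(F(\Lambda))={\tt tb}(F(\Lambda))+{\tt lk}(F(\Lambda),F(K)). \]
Bennequin's inequality ${\tt tb}(F(\Lambda))\le-1$ (Lemma \ref{lem:tbineq}) bounds the first term, and identifying ${\tt lk}(F(\Lambda),F(K))$ with ${\tt lk}(\Lambda,K)$ (the embedding $F$ preserves linking, both images being unknots in $S^3$) yields ${\tt tb}_K(\Lambda)\le-1+{\tt lk}(\Lambda,K)$. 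This runs exactly parallel to the proof of Lemma \ref{lem:tbineq}.

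The step I expect to be the main obstacle is the reduction to $B^2_{\sqrt2}\times S^1$. The computation above forces the target sphere to have radius $1$ and the domain to be $|w|<\sqrt2$, so $F$ is genuinely defined only on this ball, and $(\R^2\times S^1,\ker\beta)$ is not contactomorphic to it: the core of the former has standard neighbourhoods of every size, whereas that of the latter does not. I would therefore first apply an ambient contact isotopy carrying the compact data $\Lambda\cup K\cup\Sigma'$ into $B^2_{\sqrt2}\times S^1$, and the delicate point is to guarantee that such a placement always exists. In the intended application this is automatic, since the Legendrian approximations of a transverse knot live in an arbitrarily small standard neighbourhood, which sits well inside $B^2_{\sqrt2}\times S^1$. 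The only other non-formal ingredient, the equality ${\tt tb}(F(\Lambda_2))=-1$ rather than mere unknottedness, is the routine front/linking computation indicated above and is what pins down the sharp constant.
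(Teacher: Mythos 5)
Your proposal is correct and is essentially the paper's own argument: the paper likewise realizes the solid torus $B^2_{\sqrt{2}}\times S^1$ as the complement of a Hopf fibre in $(S^3,\,x\,dy-y\,dx)$ --- invoking the abstract prequantization picture of $S^3\to\CP^1$ rather than writing down your explicit map $F$ --- identifies $\Lambda_2$ with the standard Legendrian unknot of ${\tt tb}=-1$, and then obtains the bound by transporting the relative invariant into $S^3$ and combining Bennequin's inequality for Legendrian unknots with Part (1) of Lemma \ref{lem:twisting1}, exactly as you do. Your closing caveat, that the embedding only covers $B^2_{\sqrt{2}}\times S^1$ so the argument really applies to data that lies (or can be contact-isotoped) there, identifies an imprecision in the lemma's formulation which the paper's proof silently shares (note that ${\tt lk}(\Lambda,K)$ is itself only defined via the $S^3$-compactification, since neither knot is null-homologous in $\R^2\times S^1$); as you observe, this is harmless in the paper's applications, where all Legendrians and reference knots lie in an arbitrarily small tubular neighborhood of the transverse knot.
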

\begin{proof}
  Recall that $(S^3,x\,dy-y\,dx)$ is foliated by periodic Reeb orbits of length $2\pi$, which gives it the structure of the prequantization $S^1$-bundle over $\CP^1$ with curvature $2\pi.$ The complement of a single fibre of this prequantum bundle can thus be identified with the trivial prequantum bundle
  $$\left(B_{\sqrt{2}}^2 \times S^1,\ker\left(d\theta-(1/2)\left(y\,dx-x\,dy\right)\right)\right) \to B_{\sqrt{2}}.$$
The standard Legendrian unknot in $S^3$ can be realized as the intersection $S^3 \cap \Re \C^2$, and can thus be seen to be the two-fold cover of the equator in the prequantum bundle projection $S^3 \to \CP^1$. Since $\Lambda_2$ lives over a disc of total area $\pi$, it can be identified with the unknot in the above chart $B_{\sqrt{2}}^2 \times S^1$.

  Similarly to the proof of Lemma \ref{lem:tbineq}, the uniform upper bound then follows from the Thurston--Bennequin inequality in for Legendrian unknots in $S^3$ together with Part (1) of Lemma \ref{lem:twisting1}. Note that the linking number ${\tt lk}(\Lambda,K)$ computed in $S^3$ does not depend on the choice of $\Lambda$ as above.
\end{proof}

  
\begin{lem}
  \label{lem:tbnumbers}
  Take any reference knot $K \subset (\R^2 \setminus D^2_{\sqrt{2/m}}) \times S^1$ which is homologous to $\{0\} \times S^1$. For $m \le k,$ the Legendrian knots
  $$\Lambda_k \subset \partial D^2_{\sqrt{2/k}} \times S^1 \:\:\text{and}\:\: \Lambda_m \subset \partial D^2_{\sqrt{2/m}} \times S^1 $$
  satisfy
  $$ {\tt tb}_K(\Lambda_m) -{\tt tb}_K(\Lambda_k) = -(m-k)$$
  Hence, it follows that
  $$ {\tt tb}_K(\Lambda_m) -{\tt tb}_K(\Lambda) \ge -(m-k)$$
  for any Legendrian $\Lambda$ which is contained inside a standard neighborhood of $\Lambda_k$ while, moreover, being smoothly isotopic to $\Lambda_k$ inside the same neighborhood.
\end{lem}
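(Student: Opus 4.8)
The plan is to reduce the asserted inequality to the displayed equality, to prove the equality by a linking computation after pushing into $S^3$, and to prove the residual maximality statement by invoking Lemma \ref{lem:tbineq} inside a standard Legendrian neighbourhood. The first step is purely formal: since the equality gives ${\tt tb}_K(\Lambda_m)-{\tt tb}_K(\Lambda_k)=-(m-k)$, the inequality ${\tt tb}_K(\Lambda_m)-{\tt tb}_K(\Lambda)\ge -(m-k)$ is equivalent to the maximality statement ${\tt tb}_K(\Lambda)\le {\tt tb}_K(\Lambda_k)$. Thus it suffices to prove the equality together with this maximality.

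For the equality I would first note that, since $m\le k$, both $\Lambda_m$ and $\Lambda_k$ lie in $D^2_{\sqrt{2/m}}\times S^1$ and are smoothly isotopic to the core there, so the trace of such an isotopy is an embedded annulus $A$ with $\partial A=\Lambda_k\sqcup-\Lambda_m$ that is disjoint from $K$. Because $H_2(\R^2\times S^1)=0$, Corollary \ref{cor:tbdifference} shows that ${\tt tb}_K(\Lambda_m)-{\tt tb}_K(\Lambda_k)$ is independent of $K$ (and, by Part (2) of Lemma \ref{lem:twisting1}, equals the twisting-number difference ${\tt tw}(\Lambda_m,\OP{Fr}_A)-{\tt tw}(\Lambda_k,\OP{Fr}_A)$). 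I would use this first to replace $K$ by a reference knot $K_1\subset B^2_{\sqrt2}\times S^1$ in the class of the core, which is legitimate since $A\subset D^2_{\sqrt{2/m}}\times S^1$ is disjoint from both, and then push everything into $S^3$ by the contact-form-preserving embedding $\iota\colon B^2_{\sqrt2}\times S^1\hookrightarrow S^3$ of Lemma \ref{lem:prequant}. Each $\iota(\Lambda_j)$ is a Legendrian unknot, and arguing as in the proof of Lemma \ref{lem:prequant} through Part (1) of Lemma \ref{lem:twisting1}, the $K_1$-linking corrections cancel in the difference because all the $\Lambda_j$ are isotopic to the core in the complement of $K_1$; hence ${\tt tb}_K(\Lambda_m)-{\tt tb}_K(\Lambda_k)={\tt tb}(\iota(\Lambda_m))-{\tt tb}(\iota(\Lambda_k))$. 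Finally I would read off the absolute invariants from the prequantization picture: $\Lambda_k$ is the Legendrian lift of the base circle $\{\rho=\sqrt{2/k}\}$, which bounds a disc of symplectic area $2\pi/k$, and passing from $\Lambda_k$ to $\Lambda_{k+1}$ is a single stabilization, lowering the Thurston--Bennequin number by exactly $1$. Calibrating against ${\tt tb}(\iota(\Lambda_2))=-1$ gives ${\tt tb}(\iota(\Lambda_k))=1-k$, whence the difference is $-(m-k)$.

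For the maximality statement I would use that $\Lambda$ lies in a standard neighbourhood $N$ of the Legendrian $\Lambda_k$, which by the Legendrian neighbourhood theorem carries a contactomorphism $\psi\colon N\xrightarrow{\cong}W\subset J^1S^1$ sending $\Lambda_k$ to the zero section $j^10$. As $\Lambda$ is smoothly isotopic to $\Lambda_k$ inside $N$, its image $\psi(\Lambda)$ is smoothly isotopic to $j^10$, so Lemma \ref{lem:tbineq} gives the nonnegative difference ${\tt tb}_{j^1c}(j^10)-{\tt tb}_{j^1c}(\psi(\Lambda))\ge 0$. By Corollary \ref{cor:tbdifference} applied in $J^1S^1$ (where again $H_2=0$) this difference is independent of the reference knot, and by the contactomorphism invariance of relative Thurston--Bennequin numbers recorded in Section \ref{sec:twisting} it transports back through $\psi$ to a reference in $N$; a final application of Corollary \ref{cor:tbdifference} in $\R^2\times S^1$ identifies it with ${\tt tb}_K(\Lambda_k)-{\tt tb}_K(\Lambda)$, which is therefore $\ge 0$.

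The main obstacle is the single genuinely geometric input of the equality: showing that ${\tt tb}(\iota(\Lambda_k))$ decreases by exactly $1$, and not increases, as $k\mapsto k+1$. This amounts to controlling the holonomy of the prequantization connection along the embedded annulus $A$ -- equivalently, identifying $\Lambda_{k+1}$ as a single stabilization of $\Lambda_k$ -- and pinning down every orientation convention so that the slope is $-1$ rather than $+1$. Once this local model computation is secured, the remainder is formal bookkeeping of relative Thurston--Bennequin numbers through Lemmas \ref{lem:twisting1}, \ref{lem:tbineq} and \ref{lem:prequant}.
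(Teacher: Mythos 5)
Your overall architecture coincides with the paper's: you reduce the inequality to the equality plus the maximality statement ${\tt tb}_K(\Lambda)\le{\tt tb}_K(\Lambda_k)$, you obtain maximality from Lemma \ref{lem:tbineq} applied in a standard neighborhood of $\Lambda_k$ (with Corollary \ref{cor:tbdifference} making the choice of reference knot irrelevant), and you prove the equality by transporting differences of relative Thurston--Bennequin numbers through the embedding of Lemma \ref{lem:prequant} into $S^3$, where the linking corrections cancel by Part (1) of Lemma \ref{lem:twisting1}. All of that is correct and is essentially the paper's proof. The genuine gap is the step you yourself flag as ``the main obstacle'': the computation ${\tt tb}(\Lambda_k)=1-k$ in $S^3$. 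You derive it from the claim that $\Lambda_{k+1}$ is a single stabilization of $\Lambda_k$, calibrated by ${\tt tb}(\Lambda_2)=-1$, but you give no argument for that claim, and you concede that the sign --- which is the entire content of the equality --- is not pinned down. Note moreover that ``$\Lambda_{k+1}$ is a stabilization of $\Lambda_k$, not vice versa'' is exactly the kind of statement (cf.\ Theorem \ref{thm:girouxhonda}) that the paper is structured to avoid proving via convex surface theory, so closing the gap along your route would require real additional work rather than bookkeeping.

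The paper closes this step with a direct linking computation in the prequantization picture, needing no stabilization theory at all: the core $\{0\}\times S^1$ bounds a disc in $S^3$ whose algebraic intersection number with $\Lambda_k$ is $-k$, and $\Lambda_k$ together with its Reeb push-off are parallel curves on the torus $\partial D^2_{\sqrt{2/k}}\times S^1$, so ${\tt tb}(\Lambda_k)$ is the intersection number of a slope-$k$ curve with a slope-$1$ curve on that torus, namely $-k+1$ (consistently, $\Lambda_2$ is the standard unknot with ${\tt tb}=-1$). The difference ${\tt tb}(\Lambda_m)-{\tt tb}(\Lambda_k)=-(m-k)$ then transfers to the relative invariants by Corollary \ref{cor:tbdifference}. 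If you substitute this computation for your stabilization induction, your proposal becomes complete and agrees with the paper's argument in every essential respect.
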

\begin{proof}

  We begin by establishing the relation
  $${\tt tb}_K(\Lambda_m) -{\tt tb}_K(\Lambda_k) = -(m-k)$$
  between relative Thurston--Bennequin numbers. For this we use the contact embedding
  $$B^2_{\sqrt{2}} \times S^1 \hookrightarrow (S^3,\ker(xdy-ydx))$$
  provided by Lemma \ref{lem:prequant}. Since $\{0\} \times S^1$ bounds a disc in the prequantization bundle that has intersection number $-k$ with $\Lambda_k$, one readily computes ${\tt tb}(\Lambda_k)=-k+1$ inside the prequantization space $S^3 \to \CP^1$; it is the intersection number of a curve of slope $k$ and $1$  
 on the torus. (Note that, in particular, $\Lambda_2$ is the standard unknot.) The sought relation for the relative Thurston--Bennequin numbers then follows from Corollary \ref{cor:tbdifference}.

We continue with the inequality
  $$ {\tt tb}_K(\Lambda_m) -{\tt tb}_K(\Lambda) \ge -(m-k).$$
Note that there exists a smoothly embedded cylinder
 $\Sigma \subset D^2_{\sqrt{2/m}} \times S^1$  with boundary $\partial \Sigma=\Lambda_m \sqcup \Lambda_k$; hence such a cylinder with boundary equal to $\Lambda_m \sqcup \Lambda$ also exists. Corollary \ref{cor:tbdifference} now implies that each of the differences
 $${\tt tb}_K(\Lambda_m) -{\tt tb}_K(\Lambda) \:\: \text{and}\:\: {\tt tb}_K(\Lambda_m) -{\tt tb}_K(\Lambda_k)$$
 are independent on the choice of reference knot
 $$K \subset \left(\R^2 \setminus D^2_{\sqrt{2/m}}\right) \times S^1$$
 In particular, Lemma \ref{lem:tbineq} shows that
 $${\tt tb}_K(\Lambda_k)-{\tt tb}_K(\Lambda) \ge 0.$$
One can now compute
\begin{eqnarray*}
  \lefteqn{{\tt tb}_K(\Lambda_m) -{\tt tb}_K(\Lambda) =}\\
  &=& {\tt tb}_K(\Lambda_m) -{\tt tb}_K(\Lambda_k)+({\tt tb}_K(\Lambda_k)-{\tt tb}_K(\Lambda)) \\
  &\ge& {\tt tb}_K(\Lambda_m) -{\tt tb}_K(\Lambda_k) \ge -(m-k)
\end{eqnarray*}
as sought.
  \end{proof}

\subsection{Non-squeezing results for Legendrian knots into neighborhoods of transverse knots}
\label{sec:TransverseNonSqueezing}

In this subsection we can finally prove Theorem \ref{thm:TransverseNonSqueezing}.

We argue by contradiction and assume that there exists $\varphi_i \colon M \to M,$ such that in the language of Definition \ref{defn:squeezing}, the sequence
  \begin{gather*}
    \varphi^{(1)}_i \coloneqq \varphi_i \circ \varphi_{j_0}^{-1} \colon M \to M, \:\: i \ge j_0,
  \end{gather*}
 of contactomorphisms for $j_0 \coloneqq i_{r/2,\epsilon/3}$ squeezes the Legendrian $\Lambda' \coloneqq \varphi_{j_0}(\Lambda)$ onto $T$. By the definition of $i_{r/2,\epsilon/3}$ it follows that $d(\varphi_i^{(1)}(x),x)<\epsilon/3$ holds on the subset $M \setminus B_{r/3}(T)$ whenever $i\ge j_0$. After increasing $j_0 \gg 0, j_0 \ge i_{r/2,\epsilon/3}$ even further, we may also assume that $\Lambda' \subset B_{r/3}(T)$ is satisfied for the same choice of $r >0$.

  By Part (iii) of Lemma \ref{lem:transitivity} the property of being a squeezing sequence does not depend on the choice of metric. After choosing an appropriate metric on $M,$ 
 and taking $r>0$ above to be sufficiently small, the transverse neighborhood theorem implies that one can find a
  neighborhood $U \subset M$ of the transverse knot $T \subset M$ that is contactomorphic to
  \begin{gather*}
    \left(B^2_{2r} \times S^1_\theta, \ker\left(d\theta-(1/2)\left(y\,dx-x\,dy\right)\right)\right),\end{gather*}
  under which $T$ is identified with $\{0\} \times S^1$ and $B_s(T)$  is identified with $B^2_s \times S^1$ for all $s \le 2r$. Note that, by the above, we may assume that
  $$B_{r/2}(T) \subset \varphi_i^{(1)}(B_{r}(T)) \subset B_{2r}(T) \hookrightarrow \R^2 \times S^1.$$

There is a compactly supported contact isotopy of $B^2_r \times S^1$ that squeezes the transverse knot $\{0\} \times S^1$ onto any of the Legendrian knots $\Lambda_k$ inside the same neighborhood, where the knots $\Lambda_k \subset \partial D^2_{\sqrt{2/k}} \times S^1$
 were described in Section \ref{sec:Lambdak} above. Namely, one can use the explicitly constructed isotopy
  $$\Lambda_{k,t} \coloneqq \left\{\left(t\sqrt{2/k}e^{ik\theta},\theta+\theta_0\right); \:\theta \in S^1\right\} \subset \R^2 \times S^1$$
  which is through transverse knots for all $t \in [0,1)$ (at $t=1$ the embedding becomes equal to the Legendrian knot $\Lambda_k$). Here we need to use the standard fact that transverse isotopies are generated by an ambient contact isotopy $\psi_t$;  see Corollary \ref{cor:isoext}. Note that $\psi_t$ can be assumed to be supported inside $B_{\sqrt{2/k}}(T).$ Below we will take $k \gg 0$.

Consider the sequence $\varphi^{(2)}_i$ of contactomorphisms that is produced by Part (i) of Lemma \ref{lem:transitivity} applied to the above contact isotopy $\psi_t$ that squeezes $T$ onto $\Lambda_k$. Part (ii) of Lemma \ref{lem:transitivity} applied to the sequences $\varphi_i^{(1)}$ and $\varphi^{(2)}_i,$ 
 i.e.~the transitivity of the existence of squeezing sequences, implies that there is a sequence of contactomorphisms $\varphi_i^{(1)} \circ \varphi^{(2)}_{\alpha(i)} \colon M \to M$ that squeezes $\Lambda' \subset B_{r/3}(T)$ onto $\Lambda_k$. Note that, by Part (i) of Lemma \ref{lem:transitivity}, after choosing $k \gg 0$ in order for $\sqrt{2/k} < r/4$ to hold, we can assume that $\varphi^{(2)}_{\alpha(i)}|_{M \setminus B_{r/4}(T)}=\id_M.$
  
 The proof consists of computations and estimates of relative Thurston--Bennequin numbers ${\tt tb}_{K''}(\Lambda'')$ for Legendrians $\Lambda'' \subset B_{r/2}(T)$ and smooth knots $K'' \subset B_{2r}(T) \setminus B_{r/2}(T)$, where $[K'']=[\Lambda''] \in H_1(B_{2r}(T))=\Z$ are generators of the first homology. The relative Thurston--Bennequin number in general depends on a choice of two-chain. However, we will always consider these relative Thurston--Bennequin numbers as defined inside the contact manifold $B_{2r}(T) \cong B^2_{2r} \times S^1$; since $H_2(B_{2r}(T))=0$ these numbers are well-defined (depending only on $K''$).

Fix an arbitrary smooth knot $K \subset \partial \overline{B_{r/2}(T)}$ for which $[K]=[T] \in H_1(\R^2 \times S^1)$. We start by finding an estimate for the relative Thurston--Bennequin number ${\tt tb}_{K}(\Lambda')$ (where this invariant is computed inside the contact manifold $B_{2r}(T)$). Since $\varphi_i^{(1)} \circ \varphi^{(2)}_{\alpha(i)}$ squeezes $\Lambda'$ onto $\Lambda_k$, Lemma \ref{lem:tbnumbers} implies that
$$ {\tt tb}_{K}(\Lambda_m)-{\tt tb}_{K}(\varphi_i^{(1)} \circ \varphi^{(2)}_{\alpha(i)}(\Lambda')) \ge -(m-k)$$
whenever $\Lambda_k,\Lambda_m \subset B_{r/2}(T)$ and $i \gg 0$ is sufficiently large; to that end we note that, for large $i$, $\varphi_i^{(1)} \circ \varphi^{(2)}_{\alpha(i)}(\Lambda')$ is contained inside a standard contact neighborhood $J^1\Lambda_k \hookrightarrow \R^2 \times S^1$ of $\Lambda_k$, in which $\varphi_i^{(1)} \circ \varphi^{(2)}_{\alpha(i)}(\Lambda')$ moreover is isotopic to $j^10=\Lambda_k$.

It now follows that
$$ {\tt tb}_{(\varphi_i^{(1)} \circ \varphi^{(2)}_{\alpha(i)})^{-1}(K)}((\varphi_i^{(1)} \circ \varphi^{(2)}_{\alpha(i)})^{-1}(\Lambda_m))-{\tt tb}_{(\varphi_i^{(1)} \circ \varphi^{(2)}_{\alpha(i)})^{-1}(K)}(\Lambda') \ge -(m-k)$$
for $i \gg 0$ large. Note that
$$(\varphi_i^{(1)} \circ \varphi^{(2)}_{\alpha(i)})(B_{r}(T)) \supset B_{r-\delta}(T),$$
which means that the latter inequality is between relative Thurston--Bennequin numbers computed in $B_{2r}(T)$, and where $(\varphi_i^{(1)} \circ \varphi^{(2)}_{\alpha(i)})^{-1}(\Lambda_m) \subset B_{r-\delta}(T)$.

Corollary \ref{cor:tbdifference} implies that
$$ {\tt tb}_{K'}(\Lambda') -(m-k) \le {\tt tb}_{K'}((\varphi_i^{(1)} \circ \varphi^{(2)}_{\alpha(i)})^{-1}(\Lambda_m))$$
holds for all $K' \subset B_{2r}(T) \setminus B_{r-\delta}(T)$ in the homology class $[K']=[T]$. Taking $k \to +\infty$ while keeping $m >0$ and $K'$ fixed implies that the right-hand side tends to $+\infty$. In other words, the Legendrian $(\varphi_i^{(1)} \circ \varphi^{(2)}_{\alpha(i)})^{-1}(\Lambda_m)$ that is isotopic to $T$ can be assumed to have a relative Thurston--Bennequin number that is greater than the upper bound from Lemma \ref{lem:prequant}, 
which is a contradiction
on the Thurston--Bennequin numbers in $B^2_{2r} \times S^1$ for Legendrians in the same smooth isotopy class as $T$.

\section{Normal neighborhood for non-Legendrians (Proof of Theorem \ref{thm:squeezing})}

Here we establish a normal neighborhood theorem for non-Legendrian knots. The goal is to use the standard neighborhood for proving the existence of squeezing for non-Legendrians onto some transverse knot as stated in Theorem \ref{thm:squeezing}. Throughout this section, $(M^3,\xi)$ is a contact 3-manifold, possibly non-compact, with co-oriented contact structure $\xi=\ker\alpha.$
\begin{thm}
\label{thm:localnbhd}
Let $K \subset (M^3,\xi=\ker \alpha)$ be a smooth co-oriented knot inside a contact three-manifold with a co-oriented contact structure $\xi=\ker\alpha$, and choose a parametrization $\gamma(\theta)\in K$. There exists a neighborhood $U \supset K$ that admits a contact embedding
$$ \phi \colon (U,\xi) \hookrightarrow (J^1S^1,\xi_{st}=\ker(dz-pd\theta)) $$
that extends the map
$$\gamma(\theta) \mapsto (\theta,p,z)=(\theta,-\alpha(\dot\gamma(\theta)),0)$$
where the value of the $p$-coordinate measures the failure of the Legendrian property.
\end{thm}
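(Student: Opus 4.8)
The plan is to build the embedding by the standard two-step strategy: first produce a diffeomorphism $\phi_0$ between a tubular neighborhood of $K$ and a neighborhood of the target curve $\tilde\gamma(\theta):=(\theta,-\alpha(\dot\gamma(\theta)),0)\subset J^1S^1$ that restricts to the prescribed map on $K$ and identifies the two contact structures to first order along $K$, and then to correct $\phi_0$ to a genuine contact embedding by a relative Gray-stability argument supported near $K$. Write $a(\theta):=\alpha(\dot\gamma(\theta))$, so the prescribed map is $\gamma(\theta)\mapsto\tilde\gamma(\theta)$, and note the basic compatibility that makes the statement plausible: along $\tilde\gamma$ one computes $(dz-p\,d\theta)(\dot{\tilde\gamma})=a(\theta)=\alpha(\dot\gamma)$, so $\dot{\tilde\gamma}$ lies in $\ker(dz-p\,d\theta)$ at exactly the parameters where $\dot\gamma$ lies in $\xi$, i.e.\ at the Legendrian points. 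This is precisely the reason the $p$-coordinate is prescribed to be $-\alpha(\dot\gamma)$.

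First I would construct, for each $\theta$, a linear isomorphism $L_\theta:T_{\gamma(\theta)}M\to T_{\tilde\gamma(\theta)}(J^1S^1)$ satisfying $L_\theta^*(dz-p\,d\theta)=\alpha$, carrying $\xi$ symplectically onto $\ker(dz-p\,d\theta)$ (so that $L_\theta$ matches $d\alpha$ on the contact planes), and with $L_\theta(\dot\gamma(\theta))=\dot{\tilde\gamma}(\theta)$; this last condition is forced by the requirement $\phi_0\circ\gamma=\tilde\gamma$. The key point is to use the correct, weaker first-order data: one matches the contact forms on $TM|_K$ and the form $d\alpha$ only on the planes $\xi|_K$, and one must not demand that the Reeb field map to the Reeb field. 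Decomposing $\dot\gamma=a\,R+w$ with $R$ the Reeb field and $w\in\xi$, and similarly $\dot{\tilde\gamma}=a\,\partial_z+\tilde w$ with $\tilde w\in\ker(dz-p\,d\theta)$ and $\tilde w\neq0$ for all $\theta$, one checks that allowing $L_\theta(R)=\partial_z+\mu$ for a free vector $\mu\in\ker(dz-p\,d\theta)$ provides exactly the freedom needed to solve $a\mu+L_\theta|_\xi(w)=\tilde w$, uniformly over the Legendrian locus $\{a=0\}$, the non-Legendrian locus, and even points where $K$ is tangent to $R$. Since all relevant frame bundles over $S^1$ are trivial and the structure groups are connected, the $L_\theta$ can be chosen smoothly in $\theta$; the co-orientations of $\xi$ and of $K$ fix the orientation conventions so that symplectic, orientation-preserving identifications suffice.

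Next I would promote the bundle map $L_\theta$ to a diffeomorphism $\phi_0$ from a neighborhood $U$ of $K$ onto a neighborhood of $\tilde\gamma$ with $\phi_0|_K=\tilde\gamma$ and $d\phi_0|_K=L$, using the tubular neighborhood theorem for $K\subset M$ and for $\tilde\gamma\subset J^1S^1$. Setting $\beta:=\phi_0^*(dz-p\,d\theta)$, the construction guarantees that $\alpha$ and $\beta$ are contact forms agreeing on $TM|_K$ as $1$-forms and whose differentials agree on $\xi|_K$, with matching co-orientation. I would then run a relative Gray-stability argument: the convex combinations $\alpha_s:=(1-s)\alpha+s\beta$ are contact near $K$ because $\alpha_s\wedge d\alpha_s\neq0$ along $K$ (it equals $\alpha\wedge d\alpha$ there, by the first-order agreement), hence nearby; and since $\dot\alpha_s=\beta-\alpha$ vanishes on $TM|_K$, the associated Gray vector field vanishes along $K$. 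Integrating it yields an isotopy $\psi_s$ fixing $K$ pointwise with $\psi_1^*\ker\beta=\ker\alpha$, so that $\phi:=\phi_0\circ\psi_1$ satisfies $\phi^*\xi_{st}=\xi$ and $\phi|_K=\tilde\gamma$; shrinking $U$ makes $\phi$ an embedding.

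The main obstacle is the first step, and specifically the realization that the naive first-order matching—identifying $\alpha$ with $\beta$ \emph{together with} $d\alpha$ with $d\beta$ on all of $TM|_K$—is too rigid: it forces the Reeb field to map to the Reeb field and produces a spurious obstruction precisely where $\dot\gamma$ is parallel to $R$. Once one matches $d\alpha$ only on the contact planes, which is exactly what the contact condition $\phi^*\alpha_{st}=f\alpha$ allows (the extra term $df\wedge\alpha$ absorbs the normal derivative of the conformal factor), the obstruction disappears and the remaining steps are routine applications of the tubular neighborhood theorem and Gray stability.
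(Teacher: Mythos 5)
Your proof is correct, and it has the same two-step skeleton as the paper's proof: construct a diffeomorphism from a neighborhood of $K$ onto a neighborhood of the model curve $\tilde\gamma(\theta)=(\theta,-\alpha(\dot\gamma(\theta)),0)$ that restricts to the prescribed map on $K$ and is compatible with the contact data to first order, then remove the error by a Moser/Gray deformation whose generating vector field vanishes along $K$. The difference lies in which first-order data is matched and how. The paper picks a generic plane field $P_\theta$ transverse to both $TK$ and $\xi$, a frame $V_1\in P\cap\xi$ and $V_2\in P$ with $\alpha(V_2)=1$, and uses the exponential map to build $\psi$ with $d\psi(V_1)=\partial_p$, $d\psi(V_2)=\partial_z$; the pullback $\beta=\psi^*\alpha_{st}$ then agrees with $\alpha$ only as a $1$-form on $TM|_K$, and contactness of the interpolation $(1-t)\beta+t\alpha$ along $K$ is attributed to the dimension being three. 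Your stronger matching---$\alpha$ on $TM|_K$ \emph{and} $d\alpha$ on $\xi|_K$, with the image of the Reeb field left free---buys a cleaner Moser step: it gives $\alpha\wedge d\beta=\alpha\wedge d\alpha$ along $K$ on the nose, so the interpolation is contact with no further discussion. In fact, agreement of the $1$-forms alone is not sufficient in general: $dz-p\,d\theta$ and $dz+p\,d\theta$ agree along the zero section of $J^1S^1$, yet their convex interpolation degenerates at $t=1/2$; one also needs $d\alpha$ and $d\beta$ to induce the same orientation on $\xi|_K$, which in the paper's construction is arranged (implicitly) by the choice of sign of $V_1$. Your diagnosis that full $1$-jet matching is too rigid---forcing Reeb to Reeb and creating a spurious obstruction where $\dot\gamma$ is parallel to $R$---is exactly the right observation, and it is the same difficulty the paper's weaker matching is designed to avoid.

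The one step you should flesh out is the smooth dependence on $\theta$ of the pair $(L_\theta|_\xi,\mu_\theta)$ solving $a\mu+L_\theta|_\xi(w)=\tilde w$. Pointwise solvability is clear, but the constraint changes character between $\{a=0\}$ (where $L|_\xi$ is pinned on $w$ and $\mu$ is free) and $\{a\neq 0\}$ (where $\mu$ is determined by $L|_\xi$), and $\{a=0\}$ can be an arbitrary closed subset of $S^1$; triviality of frame bundles and connectedness of structure groups do not by themselves make $\mu=\bigl(\tilde w-L_\theta|_\xi(w)\bigr)/a$ smooth across $\{a=0\}$. A clean fix: pick an open set $N\supset\{a=0\}$ whose closure is contained in $\{w\neq0\}$; over $N$ choose $L_\theta|_\xi$ symplectic with $L_\theta|_\xi(w)=\tilde w$ (possible smoothly, since the symplectic maps sending $w$ to $\tilde w$ form a bundle with contractible fibers over $\{w\neq0\}$); extend $L|_\xi$ smoothly over $S^1\setminus N$ using path-connectedness of $\Sp(2,\R)$; finally set $\mu\equiv 0$ on $N$ and $\mu=\bigl(\tilde w-L_\theta|_\xi(w)\bigr)/a$ on $\{a\neq0\}$. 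The two formulas agree on the overlap, so $\mu$ is smooth, and all cases (Legendrian points, transverse points, points where $\dot\gamma$ is parallel to $R$) are covered. With this detail supplied, your argument is complete.
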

\begin{proof}
  We start by choosing a contact form $\alpha$ on $M$. Then we pick a generic smooth family $P_\theta \subset T_{\gamma(\theta)}M$ of tangent two-plane fields along $K$ that are transverse to both the line field $TK$ and the contact planes $\xi$ (the latter condition just means that the plane does not coincide with $\xi$); in particular, the intersection $P_\theta \cap \xi_{\gamma(\theta)}$ is one-dimensional. One can e.g.~start by choosing a generic family of two-planes that are transverse to $TK$. Then we choose a pair of smooth non-vanishing vector fields $V_1$, $V_2$ of the rank-2 vector bundle $P \to K$, where $V_1 \in P \cap \xi$. Note that $\xi$ is orientable along $K$ since the contact-structure is co-orientable, while $P$ is orientable along $K$ since the knot is co-orientable; hence $V_1$ is a trivial real line-bundle. We then choose $V_2$ so that $\langle V_1(\theta),V_2(\theta) \rangle=P_\theta$ form a basis at every point. The condition that $K$ is co-orientable is used in the last step.  
After renormalizing, we may require that $\alpha(V_2)=1$ is satisfied.

Using these two vector fields and the exponential map, we can construct a smooth embedding
$$ \psi \colon U \hookrightarrow J^1S^1 $$
of a neighborhood $U \supset K$ that extends the map
$$\gamma(\theta) \mapsto \{(\theta,p,z)=(\theta,-\alpha(\dot\gamma(\theta)),0)\}$$
and whose differential maps the vector field $V_1$ to $\partial_p$ and $V_2$ to $\partial_z$. 
It follows that $\psi$ pulls back $\alpha_{st}=dz-p\,d\theta$ to a contact form
$$\beta \coloneqq \psi^*\alpha_{st}$$
that satisfies $\beta|_{T_{\gamma(\theta)}M}=\alpha|_{T_{\gamma(\theta)}M}$ along the knot $K$.

Since the contact manifold $M$ is three-dimensional and $\ker \alpha=\ker \beta$ along $K$, the convex interpolation $\beta_t=(1-t)\beta + t\alpha$ is a family of contact forms along $K$. Since being a contact form is an open condition, $\beta_t$ are all contact forms in some small neighborhood of $K$.

A standard application of Moser's trick, see e.g.~the proof of \cite[Theorem 2.5.22]{Geiges:Intro}, produces a smooth isotopy $\psi_t$ with $\psi_0=\id_M$ defined in some small neighborhood of $K$, where $\psi_t|_K=\id_K$ and $(\psi \circ \psi_t)^*\alpha_{st}=e^{F_t}\beta_t$ for some $F_t \colon M \to \R$.
In other words, $\psi \circ \psi_1$ is the sought contact embedding.
\end{proof}

\begin{cor}
  \label{cor:isoext}
Consider a smooth isotopy
  $$\gamma_t \colon A \hookrightarrow (M^3,\xi=\ker \alpha)$$
 of a union of knots and arcs $A$ that is fixed near the boundary $\partial A,$ and which satisfies $\gamma_t^*\alpha=\eta_t^*(\gamma_0^*\alpha)$ for some smooth path of reparametrizations $\eta_t \colon A \to A$, $\eta_0=\id_A$, that fixes a neighborhood of the boundary. 
 Then the path of embeddings $\gamma_t \circ \eta_t^{-1}$ is induced by an ambient contact isotopy that can be taken to fix a neighborhood of the boundary.
\end{cor}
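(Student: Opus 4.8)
The plan is to realize the path of embeddings $\beta_t \coloneqq \gamma_t\circ\eta_t^{-1}$ as the restriction of the flow of a time-dependent contact Hamiltonian. The first step is to rewrite the hypothesis: after the substitution $\beta_t = \gamma_t\circ\eta_t^{-1}$, the condition $\gamma_t^*\alpha = \eta_t^*(\gamma_0^*\alpha)$ becomes simply that the pullback $\beta_t^*\alpha \eqqcolon \lambda$ is independent of $t$. Writing $v_t$ for the velocity field of $\beta_t$ (a section of $\beta_t^*TM$) and $h_t \coloneqq \alpha(v_t)$ for the induced function on $A$, I would differentiate $\beta_t^*\alpha = \lambda$ in $t$ and apply Cartan's formula to obtain the key identity
\[
\beta_t^*(\iota_{v_t}\,d\alpha) = -\,dh_t
\]
of $1$-forms on $A$. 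This is the only place where the hypothesis is used, and it is exactly the compatibility needed below.

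Next I would produce, for each $t$, a smooth function $H_t\colon M\to\R$ whose contact vector field $X_{H_t}$ — determined by $\alpha(X_{H_t})=H_t$ and $\iota_{X_{H_t}}d\alpha = dH_t(R)\,\alpha - dH_t$, where $R$ is the Reeb field — restricts to $v_t$ along the embedded locus $\Gamma_t\coloneqq\beta_t(A)$. Decomposing $X_{H_t}=H_t R + Y_t$ with $Y_t\in\xi$ determined by $\iota_{Y_t}d\alpha|_\xi = -dH_t|_\xi$, the requirement $X_{H_t}|_{\Gamma_t}=v_t$ amounts to prescribing the $1$-jet of $H_t$ along $\Gamma_t$: the value $H_t\circ\beta_t = h_t$, the $\xi$-differential $dH_t|_\xi = -\iota_{v_t}d\alpha|_\xi$, and (I am free to choose) the Reeb derivative $dH_t(R)=0$ along $\Gamma_t$. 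A prescribed $1$-jet along an embedded submanifold is realized by a genuine smooth function precisely when it is consistent, i.e.\ when the prescribed differential restricted to $T\Gamma_t$ equals the intrinsic derivative of the prescribed value.

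The main obstacle is exactly this consistency check. Decomposing the tangent as $\dot\beta_t = \lambda(\partial_s)\,R + (\dot\beta_t)^\xi$ and feeding in the key identity gives $dH_t\big((\dot\beta_t)^\xi\big) = dh_t/ds$ at every point, so the consistency condition collapses to $\lambda(\partial_s)\,dH_t(R)=0$, which holds automatically once $dH_t(R)=0$ is imposed. The delicate case is where $\Gamma_t$ is tangent to $\xi$, i.e.\ $\lambda(\partial_s)=0$: there $\dot\beta_t\in\xi$, so the tangential derivative of $H_t$ is entirely forced by the prescribed $\xi$-differential and cannot be repaired using the (now decoupled) Reeb derivative. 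It is precisely the identity $\beta_t^*(\iota_{v_t}d\alpha)=-dh_t$ coming from $\beta_t^*\alpha$ being $t$-independent that makes the prescription consistent at such points; without it, $v_t$ need not be the restriction of any contact vector field.

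Granting the $1$-jet, I would extend it to a smooth function $H_t$ on a tubular neighborhood of the compact set $\Gamma_t$ and cut off by a bump function to obtain a compactly supported $H_t$ depending smoothly on $t$; its flow $\Phi_t$ is then a globally defined ambient contact isotopy with $\Phi_0=\id$. Since $\tfrac{d}{dt}\beta_t = v_t = X_{H_t}|_{\Gamma_t}$ and $\tfrac{d}{dt}\Phi_t = X_{H_t}\circ\Phi_t$, the paths $t\mapsto\Phi_t(\beta_0(s))$ and $t\mapsto\beta_t(s)$ solve the same ODE with identical initial data, whence $\Phi_t\circ\beta_0=\beta_t$, which is the asserted conclusion. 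Finally, because $\gamma_t$ is fixed and $\eta_t=\id$ near $\partial A$, the velocity $v_t$ vanishes near $\beta_t(\partial A)=\gamma_0(\partial A)$, so $H_t$ may be taken to vanish there and $\Phi_t$ fixes a neighborhood of the boundary. As a sanity check, the choice $dH_t(R)=0$ along $\Gamma_t$ forces the conformal factor of $\Phi_t$ to vanish along $\Gamma_t$, consistent with $\beta_t^*\alpha$ being genuinely $t$-independent rather than merely conformally so.
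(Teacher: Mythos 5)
Your proposal is correct, but it takes a genuinely different route from the paper. The paper's proof is a reduction: it extends the Moser-trick proof of Theorem \ref{thm:localnbhd} parametrically to produce a smooth family of contact embeddings $\psi_t \colon U_t \hookrightarrow J^1A$ of neighborhoods of the moving curves, with the images $\psi_t(\gamma_t\circ\eta_t^{-1}(A))$ held fixed; inverting gives a family of contact embeddings with \emph{fixed domain}, and the standard fact that such a family is generated by a (cut-off, globally defined) contact Hamiltonian then yields the ambient isotopy. You instead construct the time-dependent contact Hamiltonian directly, by prescribing its $1$-jet along the moving curves $\Gamma_t$ --- value $h_t=\alpha(v_t)$, $\xi$-differential $-\iota_{v_t}d\alpha|_\xi$, Reeb derivative $0$ --- and verifying the consistency of this prescription via the identity $\beta_t^*(\iota_{v_t}d\alpha)=-dh_t$, which is exactly what differentiating the hypothesis $\beta_t^*\alpha\equiv\lambda$ gives; your computation of the consistency condition (it reduces to $\lambda(\partial_s)\,dH_t(R)=0$, with the hypothesis doing the work precisely at points of Legendrian tangency) is right, as are the ODE-uniqueness argument identifying the flow with $\beta_t$ and the boundary/compact-support remarks. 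What each approach buys: yours is self-contained, avoids the parametric Moser argument, and isolates explicitly where the hypothesis on $\gamma_t^*\alpha$ enters --- in effect you prove the needed contact isotopy extension theorem from scratch for curves with $t$-independent pullback of $\alpha$; the paper's proof is shorter given that Theorem \ref{thm:localnbhd} is already in place, and it sidesteps the jet-extension bookkeeping by working with open-set embeddings, where generation by Hamiltonians is immediate.
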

\begin{proof}
The pull-back of $\alpha$ is constant under the path of embeddings $\gamma_t \circ \eta_t^{-1}$. The proof of Theorem \ref{thm:localnbhd} can be extended to produce a smooth family of contact embeddings $\psi_t \colon U_t \hookrightarrow J^1A$ of neighborhoods $U_t \supset \gamma_t \circ \eta_t^{-1}(A)$, where the images $\psi_t(\gamma_t \circ \eta_t^{-1}(A))$ moreover remain fixed in the family. In addition we may assume that this family of embeddings is fixed near the boundary of $A$.

Considering the inverses $\psi_t^{-1}$, we obtain a family of contact embeddings whose domain is fixed and contains $\psi_0(\gamma_0).$ Since contact isotopies are generated by Hamiltonians, there exists a global contact isotopy $\varphi_t$ of $M$ for which $\psi_t^{-1}=\varphi_t \circ \psi_0^{-1}$. In particular,
$$\varphi_t \circ \gamma_0=\gamma_t \circ \eta_t^{-1}$$
holds as sought.
  \end{proof}

  \begin{lem}
\label{lem:1}
    Let $K$ be a non-Legendrian knot inside a contact manifold $(M^3,\xi)$. In any neighborhood of $K$ there exists a non-Legendrian knot $K_1$ which can be identified with
  $$\{ p=g(\theta), z=0\} \subset (J^1S^1=S^1_\theta \times \R_p \times \R_z,\ker (dz-p\,d\theta))$$
  under a locally defined contactomorphism, where $g^{-1}(0) \subsetneq S^1$ is a \underline{finite} union of closed intervals.
  \end{lem}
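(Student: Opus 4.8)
The plan is to reduce everything to the explicit local model furnished by Theorem \ref{thm:localnbhd}, and then to perform a $C^0$-small perturbation of a single real-valued function, chosen so that its zero set is automatically a finite union of closed intervals.

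First I would apply Theorem \ref{thm:localnbhd} to $K$, obtaining a contact embedding $\phi\colon (U,\xi)\hookrightarrow (J^1S^1,\ker(dz-p\,d\theta))$ of a neighbourhood $U\supset K$ under which $\phi(K)=\{p=g_0(\theta),\ z=0\}$, where $g_0(\theta)=-\alpha(\dot\gamma(\theta))$ is smooth. Since a graph $\{p=g(\theta),z=0\}$ has tangent $(1,g'(\theta),0)$, on which $dz-p\,d\theta$ evaluates to $-g(\theta)$, such a knot is Legendrian at $\theta$ exactly when $g(\theta)=0$; in particular $K$ is non-Legendrian if and only if $g_0\not\equiv 0$. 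Shrinking $U$, I may assume it is contained in the prescribed neighbourhood of $K$, and since $\phi(U)$ is an open set containing the compact graph $\phi(K)$, it contains a uniform tube $\{\,\mathrm{dist}(\cdot,\phi(K))<\rho\,\}$ for some $\rho>0$. Any graph $\{p=g,z=0\}$ with $\|g-g_0\|_{C^0}<\rho$ therefore lies in $\phi(U)$ and pulls back to a knot inside the given neighbourhood; moreover the linear homotopy $(1-t)g_0+tg$ is a family of graphs, hence an isotopy of embeddings, so the resulting knot is even smoothly isotopic to $K$ there.

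It remains to produce a smooth $g$, close to $g_0$, which is not identically zero and whose zero set is a finite union of closed intervals. Here I would use a fixed smooth ``flattening'' function $h\colon\R\to\R$ with $h(s)=0$ for $|s|\le 1$, $h(s)=s$ for $|s|\ge 2$, $h(s)\ne 0$ for $|s|>1$, and $|h(s)-s|\le 2$ for all $s$, and set $g_\lambda(\theta):=\lambda\,h\!\left(g_0(\theta)/\lambda\right)$ for a small parameter $\lambda>0$. Then $g_\lambda$ is smooth, $\|g_\lambda-g_0\|_{C^0}\le 2\lambda$, and by construction
$$g_\lambda^{-1}(0)=\{\theta\in S^1:\ |g_0(\theta)|\le\lambda\}=g_0^{-1}([-\lambda,\lambda]),$$
while $g_\lambda=g_0\ne 0$ at every point where $|g_0|\ge 2\lambda$. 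Choosing $\lambda<\tfrac12\max|g_0|$ guarantees that this zero set is a proper subset of $S^1$ and that $g_\lambda\not\equiv 0$, i.e.\ the knot is genuinely non-Legendrian.

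The one real point to check is that $g_0^{-1}([-\lambda,\lambda])$ is a \emph{finite} union of closed intervals rather than some more complicated closed set, and this is where I would invoke Sard's theorem: for almost every $\lambda$ both $\lambda$ and $-\lambda$ are regular values of $g_0$, so $g_0^{-1}(\{\pm\lambda\})$ is finite and $g_0^{-1}([-\lambda,\lambda])$ is a compact one-manifold-with-boundary embedded in $S^1$; being a proper subset, its components are genuine closed intervals, finitely many in number. Picking such a regular value $\lambda$ (also small in the above sense) and setting $K_1:=\phi^{-1}(\{p=g_\lambda,\ z=0\})$ then yields a non-Legendrian knot in the prescribed neighbourhood, identified via $\phi$ with the stated model, with $g_\lambda^{-1}(0)$ a finite union of closed intervals. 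The main obstacle is precisely this finiteness of the zero locus; the flattening trick makes the zero set explicitly $\{|g_0|\le\lambda\}$, after which the regular-value argument settles it immediately, and everything else (smoothness, $C^0$-closeness, remaining in the neighbourhood) is routine.
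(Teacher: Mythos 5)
Your argument is correct as a proof of the lemma exactly as it is stated, and it is genuinely different from the paper's. Both proofs start the same way, putting $K$ in the graph form $\{p=g_0(\theta),\,z=0\}$ via Theorem \ref{thm:localnbhd}; after that you diverge. You \emph{perturb}: the flattening $g_\lambda=\lambda\,h(g_0/\lambda)$ is $C^0$-close to $g_0$, its zero set is exactly $g_0^{-1}([-\lambda,\lambda])$, and choosing $\lambda<\tfrac12\max|g_0|$ with $\pm\lambda$ regular values (Sard) makes that set a compact one-manifold with boundary in $S^1$, hence a finite union of closed arcs, none of them a point or all of $S^1$. This is clean, and it has one virtue the paper's sketch lacks: the finiteness is completely transparent. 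In the paper's construction the knot is never perturbed at all; instead the Legendrian locus $f^{-1}(0)$ is covered by finitely many boxes $O_{r_i,[a_i,b_i]}$ meeting the curve transversely, and Corollary \ref{cor:isoext} is used to build an \emph{ambient contact isotopy} (a conformal shrinking $f\mapsto e^{-\rho_t}f$ of the $p$-coordinate inside the boxes) that \emph{squeezes} $K$ onto $K_1$ in the limit $t\to+\infty$; note that at every finite $t$ the zero set of $e^{-\rho_t}f$ equals that of $f$, so the finite-interval structure only appears in the limit knot.

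The caveat is that this extra output of the paper's proof --- the contact isotopy squeezing $K$ onto $K_1$, not just the existence of $K_1$ nearby --- is precisely what the proof of Theorem \ref{thm:squeezing} consumes: the reduction ``in view of Lemmas \ref{lem:1}, \ref{lem:2} and \ref{lem:3}'' chains contact squeezings from $K$ to $K_1$ to $K_2$ and finally onto $T$, so it needs a contact isotopy carrying $K$ into arbitrarily small neighborhoods of $K_1$; a $C^0$-close knot together with a smooth (non-contact) isotopy does not feed into that chain. Moreover, your construction cannot be upgraded to a finite-time contact isotopy for a structural reason: a contactomorphism maps Legendrian points of a knot to Legendrian points of its image, so when $g_0^{-1}(0)$ has infinitely many components (which is the whole difficulty of the lemma) no contactomorphic image of $K$ can have Legendrian locus equal to a nonempty finite union of closed intervals. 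This is exactly why the paper formulates the step as a squeezing (a limit process) rather than as a perturbation. So: your proof establishes the statement as written, but if it were spliced into the paper, one would still need to supply the contact isotopy squeezing $K$ onto your $K_1$ (for instance by combining your choice of target with the paper's Corollary \ref{cor:isoext} argument), since the later sections use that stronger content of the paper's proof rather than the bare statement.
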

\begin{proof}
  According to Theorem \ref{thm:localnbhd}, there exists a contact embedding of a neighborhood $U \supset K$ inside $M$ into an open subset of
  $$(J^1S^1=S^1_\theta \times \R_p \times \R_z,\ker(dz-p\,d\theta)),$$
  under which $K$ is identified with a curve of the form $C=\{p=f(\theta), z=0\}$ and $U$ is identified with a neighborhood $U_C \supset C$ in $J^1S^1$. Since $K$ is non-Legendrian by assumption, the function $f$ is not everywhere zero.

  One can find a finite number of pairwise disjoint neighborhoods of the form
  $$O_{r_i,[a_i,b_i]} \coloneqq \{\theta \in [a_i,b_i],\:z^2+p^2\le r_i^2 \} \subset U_C, \:\:i=1,\ldots,N,$$
  where we have used the identification $S^1=\R/2\pi\Z$, such that
  $$C \setminus \bigcup_{i=1}^N O_{r_i,[a_i,b_i]}$$
  consists of a finite number of transverse arcs. (Note that the transverse part of $C$ is equal to $C_{tr}=C \setminus \{p=0\}$.) We can moreover assume that $C$ intersects each $\partial O_{r_i,[a_i,b_i]}$ transversely in the boundary stratum $\{z^2+p^2=r_i^2\} \subset \partial O_{r_i,[a_i,b_i]}$.

  Consider a family $f_t(\theta)$ of smooth functions for which $f_0=f$ and such that $C_t \coloneqq \{p=f_t(\theta),z=0\}$ coincides with $C$ outside of $\bigcup_{i=1}^N O_{r_i,[a_i,b_i]}$, while $f_t(\theta)=e^{-\rho(t)}f(\theta)$ for $\theta  \in [a_i,b_i]$ where:
  \begin{itemize}
  \item $\rho(t) \ge 0;$
  \item $\rho(t)=0$ holds in a neighborhood of $\{a_i,b_i\};$ and
  \item $\rho(t)=t$ holds in the subset $f^{-1}(0) \subset \cup [a_i,b_i]$ (i.e.~the non-transverse part of $C$).
  \end{itemize}
 Corollary \ref{cor:isoext} can now readily be applied to produce the corresponding ambient contact isotopy that squeezes $C$ onto some knot $K_1= \{p=g(\theta),z=0\}$ for which $g^{-1}(0) \subsetneq S^1$ consists of a finite number of closed intervals.
\end{proof}

\begin{lem}
  \label{lem:2}
  Let $K_1$ be a non-Legendrian knot that is contactomorphic to
  $$\{z=0,p=g(\theta)\} \subset (J^1S^1=S^1_\theta \times \R_p \times \R_z,dz-p\,d\theta)$$
  where $g^{-1}(0) \subsetneq S^1$ is a finite union of closed intervals. Then there exists a contact isotopy that squeezes $K_1$ onto a knot $K_2$ that satisfies the following.
  \begin{itemize}
    \item $K_2$ is contained in an arbitrarily small neighborhood of $K_1.$  
\item $K_2$ is nowhere negatively transverse (for some choice of orientation).
\item The non-transverse part of $K_2$ again consists of a finite union of closed intervals.
  \end{itemize}
\end{lem}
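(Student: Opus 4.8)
The plan is to work throughout in the model $(J^1S^1,\ker(dz-p\,d\theta))$ supplied by the hypothesis, where $K_1=\{z=0,\ p=g(\theta)\}$, and to begin by reading off the transversality data. Parametrising $K_1$ by $\theta$ gives $\alpha(\dot\gamma)=-g(\theta)$ for $\alpha=dz-p\,d\theta$, so $K_1$ is Legendrian precisely on $g^{-1}(0)$, positively transverse where $g<0$, and negatively transverse where $g>0$. Since $g^{-1}(0)$ is a finite union of closed intervals, $S^1$ splits into these Legendrian intervals together with finitely many open transverse arcs of definite sign; fixing the orientation, the target is a knot $K_2$ on which the arcs with $g>0$ no longer occur, i.e.\ $g_\infty\le 0$. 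The first point to record is a constraint: a contactomorphism scales $\alpha$ by a positive conformal factor, so (as in Corollary \ref{cor:isoext}) any genuine contact isotopy preserves the transversality sign pointwise. Thus a negatively transverse arc stays negatively transverse for all finite time, and the arcs with $g>0$ can be removed only in the $C^0$-limit $\varphi^t(K_1)\to K_2$.

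The natural first move is to flatten each negatively transverse arc straight down onto the Legendrian $\{z=0,\ p=0\}$. This is realised by the contact flow of the cut-off Hamiltonian $H=-z\,\chi(\theta)$, with $\chi$ equal to $1$ on the arcs where $g>0$ (transitioning to $0$ inside the adjacent Legendrian intervals): its flow fixes the plane $z=0$ and sends $p\mapsto e^{-\chi(\theta)t}p$, so $\varphi^t(K_1)$ is the graph $\{z=0,\ p=e^{-\chi(\theta)t}g(\theta)\}$, which converges uniformly to a nowhere negatively transverse graph whose non-transverse locus is again a finite union of intervals. This already secures the second and third bullet points, and squeezes $K_1$ onto such a $K_2$ in the sense of Definition \ref{defn:squeezingIso}.

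The hard part will be the first bullet: that $K_2$ can be taken inside an \emph{arbitrarily} small neighbourhood of $K_1$. Flattening moves the interior of an arc a distance of order $\max g$, and this is genuinely unavoidable for a \emph{wide} arc on which $g\ge c>0$: computing $\int(\dot z-p\,d\theta)$ over the relevant $\theta$-range shows that any nowhere negatively transverse curve confined to a thin tube about such an arc must backtrack in $\theta$. My resolution is therefore to realise $K_2$ \emph{inside the thin tube} $B_\epsilon(A)\subset B_\epsilon(K_1)$ about each negatively transverse arc $A$, as a zig-zag assembled from short positively transverse and Legendrian segments that together make net progress in $\theta$ while keeping $z$ within $\epsilon$ — this is the stabilisation picture of Example \ref{ex:weaksqueezing} and Figure \ref{fig:stabilized}, now used to approximate a negatively transverse arc rather than a transverse knot. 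I would produce the squeezing isotopy by feeding an explicit such family of curves into Corollary \ref{cor:isoext}, introducing finer zig-zags (total backtracking of order $(\max g)/\epsilon$) as $t$ grows; the images remain negatively transverse along $A$ while their Hausdorff limit $K_2$ is not. The delicate remaining work, which I expect to be where the real effort lies, is to check the conditions of Definition \ref{defn:squeezingIso}: that the images lie in neighbourhoods of $K_2$ shrinking to zero, that they stay embedded (placing successive zig-zags at slightly displaced heights), and that each is smoothly isotopic to $K_2$ within the tube, the last amounting to combing out the finitely many cancelling zig-zags.
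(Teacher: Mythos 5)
Your overall strategy is in essence the paper's: keep the good part of $K_1$ and replace each arc of the wrong transversality sign by a ``stabilized'' curve inside a thin tube around it, realizing the approximation as the limit of an ambient contact isotopy obtained from an isotopy-extension statement. (The paper does this with the standard neighborhood $\left(B^2_\epsilon \times \R_z, \ker\left(dz-\frac{r^2}{2}d\theta\right)\right)$ of each such arc and the explicit transverse isotopy $s\mapsto (z=s,\theta=2s/r_0^2,r=t\cdot r_0)$, $t\in[0,1)$, spiraling the arc onto the Legendrian helix at radius $r_0$; that helix is exactly your zig-zag, and it lies within distance $r_0$ of the arc, which is what secures the first bullet.) Your Hamiltonian computation for the flattening, your observation that contact isotopies preserve the transversality sign pointwise, and your area/backtracking estimate of order $(\max g)/\epsilon$ are all correct.

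The genuine gap is at the \emph{ends} of the arcs $A$, which your proposal never discusses, and which is precisely where the paper needs an extra step that you do not have. Corollary \ref{cor:isoext} produces an ambient isotopy only from a family of embeddings that is \emph{fixed near the boundary} of the arc. If your family is fixed on a neighborhood of $\partial A$ inside the closure of $\{g>0\}$, then every image $\varphi^t(K_1)$ contains one and the same negatively transverse sub-arc of $K_1$; since $\varphi^t(K_1)\subset B_{\epsilon(t)}(K_2)$ with $\epsilon(t)\to 0$ and $K_2$ is closed, that sub-arc must lie in $K_2$, so $K_2$ is negatively transverse there --- contradicting the second bullet, for either orientation, since $K_2$ also retains the positively transverse arcs. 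Thus the modification must reach the junction points themselves, fixing points but not neighborhoods of them, with derivatives degenerating as $t\to\infty$; this cannot come from one naive application of Corollary \ref{cor:isoext} rel boundary. The paper isolates exactly this difficulty: after the spiraling it interpolates so that the residual negative transversality is confined to finitely many arbitrarily small Darboux balls at the ends of the arcs, and then a separate local isotopy squeezes the tangle in each ball onto a Legendrian tangle while fixing only the ball's boundary. A second, related omission: you never verify the hypothesis $\gamma_t^*\alpha=\eta_t^*(\gamma_0^*\alpha)$ of Corollary \ref{cor:isoext} for your family of ever finer zig-zags. That hypothesis forces each image of $A$ to remain negatively transverse with the \emph{same} total contact length $\int_A \lvert g\rvert\,d\theta$ for all $t$, so the family must be engineered to store this fixed amount of area in wiggles of shrinking size while simultaneously tapering smoothly at the junctions. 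This, rather than embeddedness or the smooth isotopy class, is where the real effort lies, and it does not appear in your list of remaining work.
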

\begin{proof}
We will construct a contact isotopy that fixes the subset $\{g > 0\}$, while the remaining parts are squeezed onto Legendrian arcs.

  Any component of $\{g < 0\}$ is a (negative) transverse arc, and thus has a standard neighborhood of the form
  $$  \left(B^2_\epsilon \times \R_z, \ker\left(dz-(1/2)\left(y\,dx-x\,dy\right)\right)\right),$$
  where in polar coordinates on $B^2$ we can express the contact form as
  $$dz-(1/2)\left(y\,dx-x\,dy\right)=dz-\frac{r^2}{2}\,d\theta$$
  We can squeeze these transverse arcs onto the Legendrian arcs $s\mapsto (z=s,\theta=2s/r_0^2,r=r_0)$ described in polar coordinates, by the transverse isotopy
  $$s\mapsto (z=s,\theta=2s/r_0^2,r=t\cdot r_0), \:\: t \in [0,1).$$
  Here we use Corollary \ref{cor:isoext} to produce the ambient isotopy.

  After an interpolation, and for $r_0 >0$ sufficiently small, we can produce a contact isotopy that squeezes $K_1$ onto a knot which is negatively transverse only inside a finite number of arbitrarily small Darboux balls
  $$B^3_\epsilon \subset (\R^3,\ker(dz-ydx)),$$
  where these Darboux balls can be assumed to be contained inside an arbitrarily small neighborhood of $K_1$. Furthermore, for an appropriate interpolation, the knot can be assumed to intersect the Darboux ball transversely in a Legendrian unknotted tangle that intersects the boundary of the ball transversely in precisely two points. 
 
Finally, it is possible to construct a contact isotopy that squeezes these tangles onto a Legendrian tangle while fixing the boundary of the Darboux ball. Again we allude to Corollary \ref{cor:isoext} in order to produce the ambient contact isotopy.
\end{proof}

\begin{lem}
  \label{lem:3}
  Inside any neighborhood of a non-Legendrian knot $K_2$ that is positively transverse except at finite number of Legendrian arcs (i.e.~satisfies the conclusion of the previous lemma) there exists a neighborhood that is contactomorphic to
  $$ U \subset \left(\RR^2_{(x,y)} \times S^1_\theta, \ker\left(d\theta-(1/2)\left(y\,dx-x\,dy\right)\right)\right)$$
  where moreover
  \begin{itemize}
  \item $(\{(0,0)\} \times S^1) \cup B^3_\epsilon(\{((0,0),0)\}) \cup \ldots \cup B^3_\epsilon(\{((0,0),N)\}) \subset U$; and
    \item the contactomorphism takes $K_2$ to a knot that coincides with $\{(0,0)\} \times S^1$ outside of the balls $B^3_\epsilon(\{((0,0),i)\})$, while its image inside each of these balls is a smoothly unknotted arc with two boundary points contained in the boundary of the ball.
    \end{itemize}
\end{lem}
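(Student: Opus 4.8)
The plan is to promote the local model from Theorem \ref{thm:localnbhd}, which gives a $J^1S^1$-neighborhood, to the $S^1$-invariant prequantization model $\R^2_{(x,y)} \times S^1_\theta$ with contact form $d\theta - (1/2)(y\,dx - x\,dy)$, while arranging the non-Legendrian knot to sit as the core $\{(0,0)\} \times S^1$ away from the finitely many Legendrian arcs. First I would invoke the conclusion of Lemma \ref{lem:2}: the knot $K_2$ is positively transverse everywhere except on a finite union of Legendrian arcs, and we may assume these Legendrian arcs are confined to disjoint Darboux balls $B^3_\epsilon$ where $K_2$ enters and exits transversely through the boundary as an unknotted tangle. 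On the complementary transverse part, the transverse neighborhood theorem (the same one used in Section \ref{sec:TransverseNonSqueezing}) provides a standard contact neighborhood of the transverse knot in which $K_2$ is identified with the core of the solid torus model $\R^2_{(x,y)} \times S^1_\theta$. The key point is that this model is exactly the $S^1$-invariant form appearing in the statement, and its Reeb direction $\partial_\theta$ runs along the core circle.

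The main work is then to glue the standard transverse neighborhood to the local Darboux balls containing the Legendrian arcs, so that the result is a single connected neighborhood $U$ of the stated form with the core-plus-balls inclusion holding. The natural approach is to start from the transverse neighborhood of the ambient transverse knot $T$ smoothly isotopic to $K_2$ (whose existence is guaranteed because $K_2$ is smoothly isotopic to a genuine transverse knot by a small perturbation away from the finitely many arcs). In this model $T$ becomes $\{(0,0)\} \times S^1$. The Legendrian arcs of $K_2$ lie in the balls $B^3_\epsilon(\{((0,0),i)\})$, and I would use a uniqueness-of-neighborhoods argument (Moser's trick in the same spirit as Theorem \ref{thm:localnbhd}) to match the Darboux coordinates near each arc with the ambient prequantization coordinates, so that the knot coincides with the core outside the balls and is the prescribed unknotted tangle inside each ball.

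The step I expect to be the main obstacle is controlling the smooth isotopy type globally: I must ensure that after identifying the transverse part with the core $\{(0,0)\} \times S^1$ and the arcs with tangles inside the balls, the reassembled knot is genuinely smoothly isotopic to $K_2$ inside $U$, with no hidden knotting or winding introduced by the gluing. This requires that each Legendrian arc sits inside its Darboux ball as a \emph{smoothly unknotted} tangle meeting $\partial B^3_\epsilon$ transversely in exactly two points, which is precisely the conclusion carried over from Lemma \ref{lem:2}; the transitions between the transverse core model and the Darboux balls must then be made through contact embeddings that respect the framing of the knot. Since both models share the same contact form structure near the core and the arcs are local, a partition-of-unity interpolation of the contact forms followed by Moser's trick — exactly as in the proof of Theorem \ref{thm:localnbhd} — produces the desired global contactomorphism. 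The finiteness of the collection of Legendrian arcs is what makes this gluing manageable.
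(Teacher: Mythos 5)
Your proposal follows essentially the same route as the paper: confine the finitely many Legendrian arcs to small Darboux balls, close up the transverse complement into a genuine closed transverse knot (the paper's $K_{tr}$, your $T$) that coincides with $K_2$ outside the balls, apply the transverse neighborhood theorem to that closed knot to get the prequantization solid-torus model, and take $U$ to be the union of this neighborhood with the Darboux balls, matched up by a Moser-type uniqueness argument. The one step the paper makes explicit that you elide is the use of Corollary \ref{cor:isoext} in the $J^1S^1$ graph model of Theorem \ref{thm:localnbhd} to shrink the Legendrian intervals so that they genuinely sit inside small round Darboux balls as unknotted arcs meeting the boundary in two points --- the conclusion of Lemma \ref{lem:2} alone does not provide this confinement, though your framework accommodates the extra step without difficulty.
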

\begin{proof}
Use Theorem \ref{thm:localnbhd} to map $K_2$ to the graph
$$\{z=0,p=g(\theta)\} \subset (J^1S^1=S^1_\theta \times \R_p \times \R_z,\ker(dz-p\,d\theta))$$
under a contactomorphic embedding of a neighborhood $U \supset K_2$. Recall that a smooth family of knots of the form
$$\{z=0,p=g_t(\theta)\} \subset (J^1S^1=S^1_\theta \times \R_p \times \R_z,\ker(dz-p\,d\theta)),$$
for which the $g_t(\theta)$ differ by pre-compositions with isotopies of $S^1,$ can be realized by an ambient contact isotopy by Corollary \ref{cor:isoext}.

After a suitable such isotopy, supported in an arbitrarily small neighborhood of $K_2$, we may assume that $g_1(\theta) \ge 0$ has the property that it vanishes precisely inside a finite number of intervals $[a_i,a_i+\epsilon]$ where $\epsilon>0$ is arbitrarily small. For $\epsilon>0$ sufficiently small we are guaranteed the existence of round Darboux balls centered at $(\theta,p,z)=(a_i,0,0)$ of radius $r=2\epsilon$ that are entirely contained inside $U$. Obviously these Darboux balls cover the non-transverse part of the knot.

The part of the knot outside of these Darboux balls is positively transverse. One can connect these arcs by positively transverse arcs inside the Darboux balls to form a closed transverse knot $K_{tr} \subset U$. The sought neighborhood is finally given by the union consisting of a suitable standard neighborhood of $K_{tr}$ together with the previously constructed Darboux balls.
  \end{proof}

  \begin{proof}[Proof of Theorem \ref{thm:squeezing}]
    In view of Lemmas \ref{lem:1}, \ref{lem:2}, and \ref{lem:3}, it suffices to produce a contact isotopy of a knot
    $$K \subset (\{(0,0)\} \times S^1) \: \cup \: B^3_\epsilon(\{((0,0),0)\}) \: \cup \: \ldots \: \cup \: B^3_\epsilon(\{((0,0),N)\})$$
that squeezes it onto the transverse knot $\{(0,0)\} \times S^1$, where we can assume that $K \cap B^3_\epsilon(\{((0,0),i)\}$ is an unknotted arc, and where $K$ coincides with the transverse knot $\{(0,0)\} \times S^1$ outside of these balls.

The contact isotopy can be taken to fix the arcs
$$K \setminus \left(B^3_\epsilon(\{((0,0),0)\}) \cup \ldots \cup B^3_\epsilon(\{((0,0),N)\})\right),$$
while, inside each Darboux ball, it acts on $K$ by the rescaling
    $$(x,y,z) \mapsto (e^{-t}x,e^{-t}y,e^{-2t}z).$$
(Here we consider a Darboux ball centered at the origin.) Corollary \ref{cor:isoext} is used in order to ensure that this isotopy is induced by an ambient contact isotopy.
  \end{proof}

\section{Smooth $C^0$-limits of Legendrians are Legendrian (proof of Theorem \ref{thm:c0})}
 
The statement that the image is a Legendrian is an immediate consequence of the following lemma together with Corollary \ref{cor:nonsqueezing}.

\begin{lem}
  \label{lem:squeezing}
  Under the assumptions of the theorem, the Legendrian $\Lambda$ is squeezed onto $K$ by the sequence $\varphi_i \colon M \to M$ of contactomorphisms. (See Definition \ref{defn:squeezing}.)
\end{lem}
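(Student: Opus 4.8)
The plan is to set $K \coloneqq \varphi_\infty(\Lambda)$ and to verify directly that the sequence $\varphi_i$ satisfies the two conditions of Definition \ref{defn:squeezing}, with $K_0=\Lambda$ and the target equal to $K$. Once this is done, the Legendrian conclusion of Theorem \ref{thm:c0} follows: if $\varphi_\infty(\Lambda)$ were non-Legendrian, then a Legendrian would be squeezed onto a non-Legendrian, contradicting Corollary \ref{cor:nonsqueezing}. Throughout I would write $\gamma\colon S^1\to M$ for a smooth parametrisation of $\Lambda$ and $\beta\coloneqq\varphi_\infty\circ\gamma$ for the resulting smooth parametrisation of $K$, and record that each $\varphi_i(\Lambda)$ is again a smooth (indeed Legendrian) knot since $\varphi_i$ is a contactomorphism.

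I would dispatch the easy parts first. For Part (2), note that $\varphi_i\to_{C^0}\varphi_\infty$ makes $\{\varphi_i\}$ uniformly Cauchy, so writing $y=\varphi_j^{-1}(x)$ gives $d(\varphi_i\circ\varphi_j^{-1}(x),x)=d(\varphi_i(y),\varphi_j(y))\le \sup_M d(\varphi_i,\varphi_j)$, which tends to $0$ as $i,j\to+\infty$ uniformly in $x$; hence for each $\epsilon>0$ there is $i_\epsilon$ working for all $x\in M$, and a fortiori on $M\setminus B_r(K)$, so one may take $i_{r,\epsilon}=i_\epsilon$ independently of $r$. For the containment half of Part (1), set $\epsilon_i\coloneqq\sup_{x\in\Lambda}d(\varphi_i(x),\varphi_\infty(x))$, so that $\epsilon_i\to 0$ and $d(\varphi_i(x),K)\le\epsilon_i$ for every $x\in\Lambda$, giving $\varphi_i(\Lambda)\subset B_{\epsilon_i}(K)$. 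Since $K$ is smooth, for $i\gg 0$ the radius $\epsilon_i$ is below the normal injectivity radius of $K$, so $B_{\epsilon_i}(K)$ is a solid torus $\cong S^1\times D^2$ with core $K$.

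The technical heart, and the step I expect to be the main obstacle, is the remaining claim of Part (1): that for $i\gg 0$ the knot $\varphi_i(\Lambda)$ is smoothly isotopic to the core $K$ inside the thin solid torus $B_{\epsilon_i}(K)$. Composing $\varphi_i\circ\gamma$ with the tubular projection $B_{\epsilon_i}(K)\to K\cong S^1$ yields, by uniform convergence to $\beta$, a map $C^0$-close to the diffeomorphism underlying $\beta$, hence of degree one, so $\varphi_i(\Lambda)$ winds exactly once. The genuine danger is that, with only $C^0$-control, $\varphi_i(\Lambda)$ might be knotted at the scale $\epsilon_i$ of the neighbourhood—this is precisely the mechanism by which $C^0$-limits of knots can change type. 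I would rule this out using that the $\varphi_i$ are diffeomorphisms and that $\Lambda$ is smooth: since $\varphi_i^{-1}\to\varphi_\infty^{-1}$ uniformly near $K$, the preimage $\varphi_i^{-1}(B)$ of a small ball $B$ meeting $\varphi_i(\Lambda)$ has diameter tending to $0$, hence lies near $\Lambda$ and meets it in an unknotted arc (smooth curves are locally unknotted); applying the diffeomorphism $\varphi_i$ of pairs $(\varphi_i^{-1}(B),\Lambda\cap\varphi_i^{-1}(B))\to(B,\varphi_i(\Lambda)\cap B)$ shows the corresponding local tangle of $\varphi_i(\Lambda)$ is unknotted as well. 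Thus for $i\gg 0$ the knot $\varphi_i(\Lambda)$ carries no small-scale knotting, and being a degree-one curve in the thin solid torus $B_{\epsilon_i}(K)$ it is isotopic to the core by a fibrewise straightening that remains inside $B_{\epsilon_i}(K)$. Converting ``winds once and no local knotting in a thin solid torus'' into an honest isotopy to the core cleanly is where I expect the real work to lie; as a fallback one could instead invoke local contractibility of the homeomorphism group applied to $g_i\coloneqq\varphi_i\circ\varphi_\infty^{-1}\to\mathrm{id}$, which satisfies $g_i(K)=\varphi_i(\Lambda)$, to transport $\varphi_i(\Lambda)$ back to $K$ through a small ambient isotopy.
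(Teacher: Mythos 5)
Your reduction of Theorem \ref{thm:c0} to the two parts of Definition \ref{defn:squeezing}, your verification of Part (2) via the uniform Cauchy estimate $d(\varphi_i\circ\varphi_j^{-1}(x),x)\le \sup_M d(\varphi_i,\varphi_j)$, and the containment $\varphi_i(\Lambda)\subset B_{\epsilon_i}(K)$ all match the paper, which disposes of these points in two lines. The entire content of the lemma is the step you yourself flag as ``where the real work lies'', and there your primary argument has a genuine gap.

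The data your local argument extracts --- that $\varphi_i(\Lambda)$ has winding number one in the thin solid torus, and that it meets every ball of some fixed small radius $\delta$ in a trivial tangle --- does not determine the isotopy class of $\varphi_i(\Lambda)$ in $B_{\epsilon_i}(K)$, so no ``fibrewise straightening'' can be manufactured from it. Knotting is not a local phenomenon: already in $S^3$, any knot in bridge position meets each of the two balls cut out by the bridge sphere in a trivial tangle, however knotted it is; triviality of the individual tangles carries no information unless one controls how they are glued. In the present setting, a winding-number-one satellite of $K$ (say the core with a small trefoil tied in, its bridges spread out longitudinally) can be arranged inside an arbitrarily thin neighborhood of $K$ so that it winds once and meets every $\delta$-ball in a trivial tangle, yet it is not isotopic to the core. (There is also a smaller issue earlier: $\varphi_i^{-1}(B)$ is merely a small open set, so $\Lambda\cap\varphi_i^{-1}(B)$ need not be connected, and the pair need not be a standard ball--arc pair.) The missing ingredient is a \emph{global} input recording how the local pictures fit together, and this is exactly what the paper's Lemma \ref{lem:smoothiso} supplies: using nested tubular neighborhoods and the fact that $\varphi_i$ is $C^0$-close, hence homotopic, to the homeomorphism $\varphi_\infty$, it shows that $\pi_1(\mathcal{N}\setminus\varphi_i(\Lambda))\cong\Z^2$ by a factoring argument (rank at least two from the inclusion of the image torus $\varphi_i(\partial\mathcal{N}_\Lambda)$, surjectivity from the deformation retraction of $\mathcal{N}\setminus\varphi_i(\mathcal{N}_\Lambda)$ onto $\mathcal{N}\setminus\varphi_i(\Lambda)$), and then invokes Neuwirth's classical theorem that the fundamental group of the complement detects the core of the solid torus (equivalently, the Hopf link). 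Your fallback --- local contractibility of the homeomorphism group (\v{C}ernavskii, Edwards--Kirby) applied to $g_i=\varphi_i\circ\varphi_\infty^{-1}\to\mathrm{id}$ --- is a genuinely different and plausible route, but as stated it hides two real issues: the theorem must be invoked in a form valid for non-compact $M$ (the paper explicitly allows open contact manifolds), and it yields only a \emph{topological} ambient isotopy from $\varphi_i(\Lambda)$ to $K$, whereas Definition \ref{defn:squeezing} requires a \emph{smooth} isotopy inside $B_{\epsilon_i}(K)$; upgrading it needs three-dimensional taming/smoothing theory, or else reduces in the end to the same $\pi_1$-plus-Neuwirth argument that the paper uses.
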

\begin{proof}
For $i_0 \gg 0$, we may assume that the contactomorphisms $\varphi_i \circ \varphi_{i_0}^{-1}$ with $i \ge i_0$ all are arbitrarily close in $C^0$-distance to the identity. 
  
Part (1) of the definition: We need to show that for $i_0 \gg 0$, there exists a tubular neighborhood of $K$ that contains the image of $\varphi_i(\Lambda)$ for all $i \ge i_0$, in which the latter is smoothly isotopic to $K$. This follows from Lemma \ref{lem:smoothiso} below.

Part (2) of the definition follows immediately from the $C^0$-convergence.

\end{proof}

\begin{lem}
\label{lem:smoothiso}
Consider a smooth knot $K \subset M$ and a fixed tubular neighborhood $\mathcal{N} \supset K$. Let $\phi \colon M \to M$ be a smooth map which is sufficiently $C^0$-close to a homeomorphism $\psi$ that satisfies $\psi(\Lambda)=K$. Then we may assume that $\phi(\Lambda) \subset \mathcal{N}$ is smoothly isotopic to $K$ inside of $\mathcal{N}$.
\end{lem}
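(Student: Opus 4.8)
The plan is to reduce the statement to a purely topological assertion about small homeomorphisms and then to invoke the coincidence of the smooth and topological categories in dimension three.

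First I would establish the containment. Since $K=\psi(\Lambda)$ is compact and lies in the open set $\mathcal{N}$, there is a $\delta>0$ with $B_\delta(K)\subset\mathcal{N}$. Taking $\phi$ sufficiently $C^0$-close to $\psi$ forces $d(\phi(x),\psi(x))<\delta$ for all $x$, and since $\psi(x)\in K$ for $x\in\Lambda$ we obtain $\phi(\Lambda)\subset B_\delta(K)\subset\mathcal{N}$; in fact $\phi(\Lambda)$ may be assumed to lie inside an arbitrarily thin tubular neighborhood of $K$.

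Next I would package the $C^0$-smallness into a single homeomorphism. Set $g\coloneqq\phi\circ\psi^{-1}$, which is a homeomorphism of $M$ (here I use that $\phi$ is a diffeomorphism, as it is in the application to a sequence of contactomorphisms) satisfying $g(K)=\phi(\psi^{-1}(K))=\phi(\Lambda)$, and which is $C^0$-close to $\id_M$ once $\phi$ is $C^0$-close to $\psi$. The problem then becomes: a homeomorphism $g$ that is $C^0$-close to the identity carries the smooth knot $K$ to the smooth knot $\phi(\Lambda)$, and I must produce a smooth isotopy between them inside $\mathcal{N}$. The key topological input is the local contractibility of the homeomorphism group (Edwards--Kirby, and Chernavskii): a homeomorphism sufficiently $C^0$-close to the identity is joined to it by an ambient isotopy $g_t$ with $g_0=\id_M$ and $g_1=g$, all of whose members are uniformly $C^0$-close to the identity, and which can be localized near the compact set $K$. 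The track $t\mapsto g_t(K)$ is then a topological isotopy from $K$ to $\phi(\Lambda)$, and for $g$ close enough to $\id_M$ it remains inside $B_\delta(K)\subset\mathcal{N}$. Moreover each $g_t(K)$ is locally flat, since local flatness of the pair $(M,K)$ at a point is preserved under the homeomorphism $g_t$ (apply $g_t$ to a flat chart).

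Finally I would upgrade this topological isotopy to a smooth one: the knots $K$ and $\phi(\Lambda)$ are smooth and are connected by a locally flat topological isotopy inside the three-manifold $\mathcal{N}$, so by the equivalence of the topological and smooth categories in dimension three (Moise, together with the smoothing theory for submanifolds and isotopies of $3$-manifolds) they are smoothly isotopic inside $\mathcal{N}$. The main obstacle is exactly this passage through the topological category. The containment step is elementary, but ruling out that $\phi(\Lambda)$ differs from $K$ by local knotting or by a change of isotopy class inside $\mathcal{N}$ genuinely requires the \emph{global} $C^0$-closeness of $\phi$ to the homeomorphism $\psi$ (so that $g$ is close to the identity and hence ambient isotopic to it), rather than merely the closeness of $\phi(\Lambda)$ to $K$; one then needs the three-dimensional smoothing theorem to return to the smooth setting.
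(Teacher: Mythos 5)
Your proposal is correct in outline, but it takes a genuinely different and much heavier route than the paper. The paper never leaves the world of elementary algebraic topology: it chooses nested tubular neighborhoods $\mathcal{N}_1\subsetneq\mathcal{N}_2\subsetneq\mathcal{N}$, uses only the fact that a map $C^0$-close to $\psi$ is \emph{homotopic} to $\psi$ (so that $\phi$ and $\psi$ induce the same maps on $\pi_1$), and from this computes $\pi_1(\mathcal{N}\setminus\phi(\Lambda))\cong\Z^2$ by a factorization/rank argument; the single nontrivial input is then Neuwirth's classical theorem that the Hopf link is detected by the fundamental group of its complement, which identifies $\phi(\Lambda)$ with the core of the solid torus up to isotopy. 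Your argument instead packages the closeness into the homeomorphism $g=\phi\circ\psi^{-1}$ and invokes two large pieces of machinery: Chernavskii--Edwards--Kirby local contractibility of homeomorphism groups (the torus trick) to get a small ambient topological isotopy from $\mathrm{id}$ to $g$, and then three-dimensional smoothing theory (Moise, Bing, and in effect $\pi_0\mathrm{Diff}=\pi_0\mathrm{Homeo}$ for $3$-manifolds, in a version relative to a knot) to convert the ambient topological isotopy into a smooth one. Both routes work; what the paper's approach buys is self-containedness and weaker hypotheses (it only uses the homotopy-theoretic consequence of $C^0$-closeness, not invertibility of $\phi$ away from $\Lambda$ nor any deformation theorem), while your approach buys generality: it would prove the analogous statement in any dimension where the corresponding TOP/DIFF comparison results hold, whereas the paper's argument is tailored to knots in a solid torus.

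One point in your final step needs care. The phrase ``connected by a locally flat topological isotopy'' is not, by itself, enough: a path of locally flat embeddings can connect distinct smooth knot types (the classical shrinking-the-knotted-part isotopy connects any knot to the unknot through embeddings, each of which has locally flat image). What saves your argument is that your isotopy is \emph{ambient} --- it is the track $t\mapsto g_t(K)$ of an isotopy of homeomorphisms --- and it is this ambientness (equivalently, local flatness of the level-preserving embedding $S^1\times[0,1]\hookrightarrow \mathcal{N}\times[0,1]$, not of each slice) that feeds into the smoothing theorems. Also note that the conversion step itself is delicate for exactly the reason you identify at the end: a pair homeomorphism $(\mathcal{N},K)\to(\mathcal{N},\phi(\Lambda))$ cannot simply be $C^0$-approximated by diffeomorphisms, since the approximation would only carry $K$ \emph{near} $\phi(\Lambda)$, and nearness does not imply isotopy --- one needs a relative (pair) version of the approximation/smoothing theorems. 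So state precisely which relative statement you are using; as written, ``equivalence of the topological and smooth categories in dimension three'' papers over the exact point where the argument could otherwise become circular.
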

\begin{proof}

It suffices to show that $\pi_1(\mathcal{N} \setminus \phi(\Lambda))=\Z^2$ since the existence of a smooth isotopy inside $\mathcal{N}$ from $\phi(\Lambda)$ to $K$ is then a consequence of the classical fact that the Hopf link is detected by the fundamental group of its complement; see \cite{Neuwirth}. To that end, note that $\mathcal{N}$ is a solid torus, i.e.~the complement of an unknot in $S^3$.

  Consider nested closed tubular neighborhoods
  $$K \subset \mathcal{N}_1 \subsetneq \mathcal{N}_2 \subsetneq \mathcal{N}$$
  that hence satisfy the property that the inclusion $\partial\mathcal{N}_2 \subset \mathcal{N} \setminus \mathcal{N}_1$ is a homotopy equivalence between a torus and a fattened torus.
  
  We consider the tubular neighborhood $\mathcal{N}_\Lambda \coloneqq \psi^{-1}(\mathcal{N}_2)$ of $\Lambda$. For $\phi$ sufficiently $C^0$-close to $\psi$, we may assume that $\phi(\partial\mathcal{N}_\Lambda) \subset \mathcal{N} \setminus \mathcal{N}_1$ is satisfied. Since the map is a $C^0$-approximation of $\psi$, it is clearly homotopic to $\psi$. Hence, it follows that
$$(\phi|_{\partial\mathcal{N}_\Lambda})_* =(\psi|_{\partial\mathcal{N}_\Lambda})_* \colon \pi_1(\partial\mathcal{N}_\Lambda) \to \pi_1(\mathcal{N} \setminus \mathcal{N}_1)$$
is an isomorphism of fundamental groups. In other words, the inclusion $\phi(\partial\mathcal{N}_\Lambda) \subset \mathcal{N} \setminus \mathcal{N}_1$ also induces an isomorphism of fundamental groups
$$(\iota_{\phi(\partial \mathcal{N}_\Lambda)})_* \colon \pi_1(\phi(\partial \mathcal{N}_\Lambda)) \to \pi_1(\mathcal{N} \setminus \mathcal{N}_1).$$

First we claim that the rank of $\pi_1(\mathcal{N} \setminus \phi(\Lambda))$ is at least equal to two. This follows since the previously established isomorphism
$$(\iota_{\phi(\partial \mathcal{N}_\Lambda)})_* \colon \pi_1(\phi(\partial \mathcal{N}_\Lambda)) \cong \Z^2 \to \pi_1(\mathcal{N} \setminus \mathcal{N}_1)$$
of groups factors through $\pi_1(\mathcal{N} \setminus \phi(\mathcal{N}_\Lambda))$. (Recall that $\phi(\partial\mathcal{N}_\Lambda) \subset \mathcal{N} \setminus \mathcal{N}_1$ and that there is a homeomorphism $\mathcal{N} \setminus \phi(\mathcal{N}_\Lambda)\cong \mathcal{N} \setminus \phi(\Lambda)$ since $\mathcal{N}_\Lambda \supset \Lambda$ is a tubular neighborhood.)

Second, we claim that the inclusion
$$\iota \colon \mathcal{N} \setminus \mathcal{N}_1 \hookrightarrow \mathcal{N} \setminus \phi(\Lambda) $$
induces a surjection
$$ \iota_* \colon \pi_1(\mathcal{N} \setminus \mathcal{N}_1) \cong \Z^2 \to \pi_1(\mathcal{N} \setminus \phi(\Lambda))$$
of fundamental groups. Namely, since the inclusion $\mathcal{N} \setminus \phi(\mathcal{N}_\Lambda) \subset \mathcal{N} \setminus \phi(\Lambda)$ is a deformation retract, considering the composition of inclusions
$$ \mathcal{N} \setminus \phi(\mathcal{N}_\Lambda) \subset \mathcal{N} \setminus \mathcal{N}_1 \subset \mathcal{N} \setminus \phi(\Lambda)$$
we see that $\iota_*$ factorizes through an isomorphism
$$ \pi_1(\mathcal{N} \setminus \phi(\mathcal{N}_\Lambda)) \cong \pi_1(\mathcal{N} \setminus \phi(\Lambda)).$$

Finally, the fact that the surjective group homomorphism $\iota_* \colon \Z^2 \to \pi_1(\mathcal{N} \setminus \phi(\Lambda))$ in addition is injective now follows by purely algebraic considerations, using the previously established fact that the rank
 of $\pi_1(\mathcal{N} \setminus \phi(\Lambda))$ is at least equal to two. (The rank of $\Z^2$ is equal to two and that any quotient of $\Z^2$ by a non-trivial subgroup has rank strictly less than two).

\end{proof}

Now that we know $K$ is Legendrian, it remains to show that $K$ is the contactomorphic image of $\Lambda$. We establish this by showing that $\phi_i(\Lambda)$ is Legendrian isotopic to $K$ for $i \gg 0$. We may assume that $\phi_i(\Lambda) \subset J^1K$ is contained inside a standard contact neighborhood of $K$, and that $\phi_i(\Lambda)$ moreover is \emph{smoothly} isotopic to $j^10=K$ inside the same neighborhood. 
Moreover, we prove the following.

\begin{prop}
\label{prop:Classify}
The Legendrian knot  $\varphi_i(\Lambda) \subset J^1K \subset M$ for $i \gg 0$ has the same classical invariants as the Legendrian $K=j^10$ (rotation number, Thurston--Bennequin invariant, smooth isotopy class) when considered inside the standard contact neighborhood $J^1K$ of the Legendrian knot $K=\phi_\infty(\Lambda)$.
\end{prop}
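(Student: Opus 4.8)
The plan is to check the three invariants separately, taking the smooth isotopy class as already matched. The governing dichotomy is that the Thurston--Bennequin number is a \emph{homological} quantity (a linking number), and is therefore rigidly preserved by the genuine contactomorphisms $\varphi_i$, whereas the rotation number records the winding of the Legendrian tangent line in $\xi$ and is thus invisible to purely homological or $C^0$-type arguments. The two invariants accordingly demand different treatments, and I expect the rotation number to be the main obstacle.

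For the Thurston--Bennequin number I would fix a reference knot $R \subset J^1K$ near the boundary of the neighborhood with $[R]=[K]\in H_1(J^1K)=\Z$, disjoint from $\varphi_i(\Lambda)$ for $i\gg 0$. Contactomorphism invariance of the relative Thurston--Bennequin number from Section~\ref{sec:twisting} (the identity $\tb_{K,\Sigma}(\Lambda)=\tb_{\phi(K),\phi(\Sigma)}(\phi(\Lambda))$) gives $\tb_R(\varphi_i(\Lambda))=\tb_{\varphi_i^{-1}(R)}(\Lambda)$. I would then use that $\varphi_i^{-1}\to_{C^0}\varphi_\infty^{-1}$ together with $\varphi_\infty^{-1}(K)=\Lambda$: choosing $R$ sufficiently close to $K$ forces $\varphi_\infty^{-1}(R)$, and hence $\varphi_i^{-1}(R)$ for $i\gg 0$, into a fixed standard neighborhood $J^1\Lambda$ of $\Lambda$, in the homology class $[\Lambda]$ and disjoint from $\Lambda$. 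Since $H_2(J^1\Lambda)=0$, the number $\tb_{\varphi_i^{-1}(R)}(\Lambda)$ depends only on this homology class, so it agrees with the relative Thurston--Bennequin number of the zero section $\Lambda=j^10$, which is $0$ by Lemma~\ref{lem:tbineq}. Hence $\tb_R(\varphi_i(\Lambda))=0=\tb_R(j^10)$ for all $i\gg 0$; in particular the $\varphi_i(\Lambda)$ carry no stabilizations in the homological sense.

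For the rotation number this transport is of no help: pulling back by $\varphi_i$ only replaces the problem of computing the rotation number of a Legendrian that is $C^0$-close to the zero section of $J^1K$ by the identical problem for $\varphi_i^{-1}(K)$, which is $C^0$-close to the zero section of $J^1\Lambda$. Indeed a Legendrian that is $C^0$-close and smoothly isotopic to $j^10$ may carry arbitrarily many small zig-zags, which change the rotation number while leaving both the $C^0$-distance and the smooth class untouched; this is exactly why the rotation number is the hard part. My route around this is to feed in the sharp (mountain-range) Bennequin inequality: for a Legendrian $\Lambda'\subset J^1S^1$ smoothly isotopic to the core one has $\tb_R(\Lambda')+|r(\Lambda')|\le 0$, where $r$ denotes the rotation number. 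I would establish this exactly as in the proof of Lemma~\ref{lem:tbineq}, pushing $J^1S^1$ into $(\R^3,\xi_{st})$ so that the reference goes to a standard unknot and the normalization $\tb_R(\Lambda')=\tb(F(\Lambda'))+1$ holds, and then invoking the sharp unknot inequality $\tb+|r|\le -1$ of Bennequin--Eliashberg--Fraser (under which both $\tb_R$ and $r$ correspond across the contact embedding $F$). Since we have just shown $\tb_R(\varphi_i(\Lambda))=0$ is maximal, this forces $|r(\varphi_i(\Lambda))|\le 0$, i.e.\ $r(\varphi_i(\Lambda))=0=r(j^10)$.

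Combining the three matchings, $\varphi_i(\Lambda)$ and $j^10=K$ agree in smooth isotopy class, Thurston--Bennequin number and rotation number, which is the assertion of the proposition. To summarize the expected difficulty: the rotation number cannot be controlled by the homological and contactomorphism rigidity that pins down the Thurston--Bennequin number, and must instead be forced by the \emph{sharp} Bennequin-type inequality at the maximal Thurston--Bennequin value; establishing that refined inequality (rather than merely the $\tb\le -1$ bound already recorded) is the one genuinely new input this step requires.
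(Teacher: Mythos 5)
Your treatment of the rotation number is exactly the paper's: once the Thurston--Bennequin number of $\varphi_i(\Lambda)$ is known to be maximal, push the neighborhood into $(\R^3,\xi_{st})$ by an embedding $F$ taking $j^10$ to the standard unknot and invoke Bennequin's full inequality ${\tt tb}+|{\tt rot}|\le -1$ to force the rotation number to vanish. The genuine gap is in your Thurston--Bennequin step, at the sentence ``since $H_2(J^1\Lambda)=0$, the number ${\tt tb}_{\varphi_i^{-1}(R)}(\Lambda)$ depends only on this homology class.'' The relative Thurston--Bennequin number of a fixed Legendrian is \emph{not} determined by the class of the reference knot in $H_1(J^1\Lambda)\cong\Z$: by Part (1) of Lemma \ref{lem:twisting1} it changes when the reference is replaced by one linking $\Lambda$ differently, i.e.\ it depends on the class of the reference in $H_1(J^1\Lambda\setminus\Lambda)\cong\Z^2$ (a framing datum) --- concretely, push-offs of $\Lambda$ with different framings are all homologous to the core and disjoint from $\Lambda$, yet realize every integer as reference value. (There is also a secondary issue: your transported chain $\varphi_i^{-1}(\Sigma)$ need not lie in $J^1\Lambda$, and when $H_2(M)\neq 0$ the relative ${\tt tb}$ depends on the chain, so vanishing of $H_2(J^1\Lambda)$ alone does not permit the comparison.) The $C^0$-convergence does place $\varphi_i^{-1}(R)$ inside $J^1\Lambda$ in the core homology class, but the missing meridian coefficient of $[\varphi_i^{-1}(R)]$ in $H_1(J^1\Lambda\setminus\Lambda)$ is, up to sign and normalization, precisely the quantity ${\tt tb}_R(\varphi_i(\Lambda))$ you are trying to compute: pinning it down amounts to knowing that the homeomorphism $\varphi_\infty$ matches the contact framings of $\Lambda$ and $K$, which is the assertion being proved. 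So your ${\tt tb}$ argument is circular. A telling symptom is that it invokes Lemma \ref{lem:tbineq} only for the normalization ${\tt tb}_{j^1c}(j^10)=0$ and never uses the \emph{inequality}, whereas any proof must use non-trivial contact-topological input, since stabilized Legendrians exist arbitrarily $C^0$-close to, and smoothly isotopic to, the zero section with the same homological data.

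The paper closes exactly this gap by a two-sided argument in which Bennequin's inequality is applied twice. The direction ${\tt tb}_{K'}(\varphi_{i_0}(\Lambda))\le {\tt tb}_{K'}(K)$ follows from Lemma \ref{lem:tbineq} because $\varphi_{i_0}(\Lambda)$ lies in $J^1K$ smoothly isotopic to the zero section. For the reverse direction, the paper observes (via Lemma \ref{lem:squeezing}) that the \emph{inverse} sequence $(\varphi_i\circ\varphi_{i_0}^{-1})^{-1}$, which $C^0$-converges to the homeomorphism $\varphi_{i_0}\circ\varphi_\infty^{-1}$, squeezes $K$ onto $\varphi_{i_0}(\Lambda)$; hence for $i\gg 0$ the knot $(\varphi_i\circ\varphi_{i_0}^{-1})^{-1}(K)$ lies in a standard neighborhood of $\varphi_{i_0}(\Lambda)$ and Lemma \ref{lem:tbineq} applies \emph{there}, giving ${\tt tb}_{K'}(\varphi_{i_0}(\Lambda))\ge {\tt tb}_{K'}\bigl((\varphi_i\circ\varphi_{i_0}^{-1})^{-1}(K)\bigr)$. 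The right-hand side is then identified with ${\tt tb}_{K'}(K)$ using contactomorphism invariance together with the fact that $\varphi_i\circ\varphi_{i_0}^{-1}$ is $\epsilon$-close to the identity on $B_r(K)$, so that reference knots and chains stay in an annular region of $J^1K$ where the comparison is purely homological. It is this use of Part (2) of Definition \ref{defn:squeezing} --- control away from the knot, entirely absent from your proposal --- that supplies the framing rigidity your homological transport cannot. Your closing remark inverts the paper's actual division of labor: the rotation number is the easy final step, while the Thurston--Bennequin equality is where the two applications of Bennequin's inequality are genuinely needed.
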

\begin{rem}In the case when the contact manifold $(M,\xi)$ satisfies $H_1(M)=H_2(M)=0$ and the absolute Thurston--Bennequin invariant thus is well-defined, the same ideas as the proof of Proposition \ref{prop:Classify} can be used to show something stronger: if the Legendrian $\Lambda_0$ can be squeezed onto the Legendrian $\Lambda_1$, and $\Lambda_1$ can be squeezed onto $\Lambda_0$, then $\Lambda_0$ and $\Lambda_1$ are contactomorphic.
  \end{rem}

\begin{proof}
  If we take $i_0 \gg 0$ sufficiently large, then $\varphi_{i}(\Lambda) \hookrightarrow U_K \subset (M,\xi)$ for all $i \ge i_0$, where $U_K \hookrightarrow J^1K$ is contactomorphic to standard contact neighborhood of $K$ in which the latter is identified with $j^10$. Furthermore, we may assume that $\varphi_i \circ \varphi_{i_0}^{-1}$ is $\epsilon$-close to the identity on some neighborhood $B_{r}(K) \subset J^1K$, while $\varphi_{i_0}(\Lambda) \subset B_{r/2}(K)$, for some fixed $r >0$ and $\epsilon>0$ arbitrarily small.

First we show that for any knot $K' \subset B_r(K) \setminus B_{r/2}(K)$ in the same homology class as $K,$ the relative Thurston--Bennequin numbers ${\tt tb}_{K'}(\varphi_{i_0}(\Lambda))$ and ${\tt tb}_{K'}(K)$ as computed inside $U_K,$ are the same.
 Since Lemma \ref{lem:tbineq} implies $ {\tt tb}_{K'}(\varphi_{i_0}(\Lambda)) \le {\tt tb}_{K'}(K),$ it suffices to prove 
 $ {\tt tb}_{K'}(\varphi_{i_0}(\Lambda)) \ge {\tt tb}_{K'}(K).$

Consider the sequence $(\varphi_i \circ \varphi_{i_0}^{-1})^{-1}$ of inverses of the above contactomorphisms, which $C^0$-converges to $\varphi_{i_0} \circ \varphi_\infty^{-1}$. Lemma \ref{lem:squeezing} implies this sequence of contactomorphisms squeezes the Legendrian $K$ onto $\varphi_{i_0}(\Lambda)$. As before, we again assume the contactomorphisms to be $\epsilon$-close to the identity on $B_r(K)$ for $i \ge i_0$.

  For $i \gg 0$ the squeezing property implies that
  $$(\varphi_i \circ \varphi_{i_0}^{-1})^{-1}(K) \subset U_\Lambda$$
  where $U_\Lambda \hookrightarrow J^1\Lambda$ is a standard contact neighborhood of $\varphi_{i_0}(\Lambda)$ in which the latter is identified with $j^10$. Again  Lemma \ref{lem:tbineq} implies
    $${\tt tb}_{K'}(\varphi_{i_0}(\Lambda)) \ge {\tt tb}_{K'}((\varphi_i \circ \varphi_{i_0}^{-1})^{-1}(K))={\tt tb}_{\varphi_i \circ \varphi_{i_0}(K')}(K).$$
  Since $\varphi_i \circ \varphi_{i_0}|_{K'}$ is $\epsilon$-close to the identity, we may assume that $\varphi_i \circ \varphi_{i_0}(K')$ is homologous to $K'$ inside $B_{r}(K) \setminus B_{r/2}(K)$. This immediately implies that ${\tt tb}_{\varphi_i \circ \varphi_{i_0}(K')}(K)={\tt tb}_{K'}(K)$ is satisfied.

  This finishes the proof of the equality of relative Thurston--Bennequin numbers
  $$ {\tt tb}_{K'}(\varphi_{i_0}(\Lambda)) = {\tt tb}_{K'}(K).$$
  in $U_K \hookrightarrow J^1K$.

Since the smooth isotopy types are clearly the same, it remains to establish an equality between rotation numbers. 
Recall the Thurston--Bennequin inequality 
$$ {\tt tb}(F(\varphi_{i_0}(\Lambda))) +|{\tt rot}(F(\varphi_{i_0}(\Lambda)))| \le -1,$$
where $F \colon J^1S^1 \hookrightarrow (\R^3,\xi_{st})$ is a contact embedding that takes $j^10$ to the standard unknot $\Lambda_{st}$ \cite{Bennequin:Entrelacements}. Then
$${\tt tb}(F(\varphi_{i_0}(\Lambda)))={\tt tb}(F(j^10))={\tt tb}(\Lambda_{st})=-1,$$
 implies the vanishing of the rotation number.

\end{proof}

Proposition \ref{prop:Classify} combined with the classification result of Legendrian knots inside $J^1S^1$ in \cite{DingGeiges} due to Ding--Geiges produces the sought-after Legendrian isotopy from $\varphi_i(\Lambda)$ to $K$ for $i \gg 0$ confined inside the standard contact neighborhood $J^1K \subset M.$


\bibliographystyle{alpha}
\bibliography{references}

\end{document}